\newcolumntype{Q}{>{$\displaystyle}l<{$}}
\newcolumntype{A}{>{$}c<{$}}
\newcommand{\lemref}[1]{Lemma~\ref{#1}}
\newcommand{\disc}{\mathbb{D}}
\newcommand{\Scar}{\mathcal{S}^*_{car}}
\DeclareMathOperator{\RE}{Re}
\numberwithin{equation}{section}
\newtheorem{theorem}{Theorem}[section]
\newtheorem{lemma}[theorem]{Lemma}
\newtheorem{corollary}[theorem]{Corollary}
\theoremstyle{remark}
\begin{document}
\title[Inclusion Relations and Radius Problems]{Inclusion Relations and Radius Problems for a subclass of starlike functions}
 	
\author[P. Gupta]{Prachi Gupta}
	\address{Department of Mathematics, University of Delhi, Delhi--110 007, India}
 	\email{prachigupta161@gmail.com}

 	\author[S. Nagpal]{Sumit Nagpal}
 	\address{Department of Mathematics, Ramanujan College, University of Delhi,
Delhi--110 019, India}
 	\email{sumitnagpal.du@gmail.com }
 	
 	\author[V. Ravichandran]{V. Ravichandran}
 	
 	\address{Department of Mathematics, National Institute of Technology, Tiruchirappalli--620 015, India}
 	\email{vravi68@gmail.com, ravic@nitt.edu}
 	\dedicatory{Dedicated to Prof.\@ Ajay Kumar on the occasion of his retirement}

 	\begin{abstract}
By considering the polynomial function $\phi_{car}(z)=1+z+z^2/2,$ we define the class $\Scar$ consisting of normalized analytic functions $f$ such that $zf'/f$ is subordinate to $\phi_{car}$ in the unit disk. The inclusion relations and various radii constants associated with the class $\Scar$ and its connection with several well-known subclasses of starlike functions is established. As an application, the obtained results are applied to derive the properties of the partial sums and convolution.
 	\end{abstract}
\keywords{subordination; close-to-convex; starlike; cardioid; inclusion relation; radius problem; partial sum; convolution.}
 	\subjclass[2020]{30C45, 30C80}
 	
 	\maketitle
 	
\section{Introduction}
For $r>0$, let $\mathbb{D}_r:=\{z\in \mathbb{C}:|z|<r\}$. Let $\mathcal{A}$ denote the class of all analytic functions $f$ defined in the unit disk $\mathbb{D}:=\mathbb{D}_1$ such that $f(0)=f'(0)-1=0$ and $\mathcal{S}$ be its subclass consisting of univalent functions. Let $\mathcal{S}^*$ and $\mathcal{K}$ denote the  well-known subclasses of $\mathcal{S}$ consisting of starlike and convex functions respectively. Another interesting subclass of $\mathcal{A}$ is the class $\mathcal{C}$ of close-to-convex functions introduced by Kalpan \cite{KAP}. If $f \in \mathcal{C}$, then there exists a convex function $g$ (not necessarily normalized) such that $\RE (f'(z)/g'(z))>0$ for all $z\in \disc$. By Noshiro-Warschawski Theorem, $\mathcal{C}\subset \mathcal{S}$. Moreover, every starlike function is close-to-convex, but the converse need not be true in general. For example, the polynomials
\[f_n(z)=z+\frac{z^2}{2}+\frac{z^3}{3}+\cdots+\frac{z^n}{n},\quad n\geq1\]
are close-to-convex, but $f_n(\disc)$ is a non-starlike domain for $n\geq3$. The function
\begin{equation}\label{eq1.1}
\phi_{car}(z):=1+f_2(z)=1+z+\frac{z^2}{2},\quad z\in\disc
\end{equation}
maps $\disc$ onto a domain $\Omega_{car}$ bounded by the cardioid $(4x^2+4y^2-8x-1)^2+4(4x^2+4y^2-12x+1)=0$. To see this, note that $\phi_{car}(e^{it})=x+iy$ where
\[x=1+\cos t+\frac{1}{2}\cos 2t \quad \mbox{and}\quad y=\sin t+\frac{1}{2}\sin 2t\]
so that $4x^2+4y^2-8x-1=4 \cos t$ and $4x^2+4y^2-12x+1=-4\cos^2 t$. Note that $\phi_{car}(\disc)=\Omega_{car}$ lies in the right half-plane, is starlike with respect to $\phi_{car}(0)=1$ and symmetric with respect to the real axis with $\phi_{car}'(0)>0$.

Let $\mathcal{S}^*(\varphi)$ denotes the unified class of starlike functions introduced and studied by Ma and Minda\cite{MaMinda} consisting of functions $f\in \mathcal{S}$ satisfying $zf'(z)/f(z)\prec \varphi(z)$ for all $z\in \disc$ where $\varphi$ is a univalent function with positive real part, maps $\mathbb{D}$ onto a domain symmetric with respect to the real axis and starlike with respect to $\varphi(0)=1$ and $\varphi'(0)>0$. Since the function $\phi_{car}$ satisfy all the pre-requisites, therefore the class $\Scar:=\mathcal{S}^*(\phi_{car})$ is well-defined. It is easy to see that a function $f\in \Scar$ if and only if there exists an analytic function $q$, $q \prec \phi_{car}$ such that
\[f(z)=z\exp\left(\int_{0}^{z}\frac{q(t)-1}{t}dt\right).\]
%For instance, if we consider $q_1(z)=(3+2z)/(3+z)$, $q_2(z)=(1+2e^{-z})/3,$ and $q_3(z)=(3+\sqrt{1+z})/4$, then $q_i(\disc)\subset\Omega_{car}$ for $n=1,2,3$ (see Figure ), the resulting functions
%$f_1(z)=z+z^2/3$, $f_2(z)=z-2z^2/3+7z^3/18-16z^4/81+\cdots$ and $f_3(z)=z+z^2/8-z^3/128+\cdots$ are in $\Scar.$
By taking $q(z)=\phi_{car}(z)=1+z+z^2/2, $ we obtain
 \begin{equation}\label{fcar}
 	f_{car}(z):=z\exp\left(z+\frac{z^2}{4}\right)=z+z^2+\frac{3z^3}{4}+\frac{5z^4}{12}+\frac{19z^5}{96}+\cdots
 \end{equation}
Using the results obtained by Ma and Minda \cite{MaMinda}, it follows that the subordinations $f(z)/z\prec f_{car}(z)/z$ and $zf'(z)/f(z)\prec zf'_{car}(z)/f_{car}(z)$ holds in $\disc$ for any $f\in \Scar$. Also, the inequality $-f_{car}(-|z|)\leq|f(z)|\leq f_{car}(|z|)$ is valid for all $z\in \disc$ and for every $f\in\Scar$, with equality holding for some non-zero $z$ if and only if $f$ is a rotation of $f_{car}$. In particular, $f(\disc)$ contains the disk $\left\{w:|w|<-f_{car}(-1)=e^{-3/4}\approx 0.47236\right\}.$ Also, if $f\in \Scar$ is given by the Taylor series representation
\begin{equation}\label{eq}
 f(z)=z+\sum_{n=2}^{\infty}a_n z^n
\end{equation}
then $|a_2|\leq 1,$ $|a_3|\leq 3/4$ and $|a_4|\leq 5/12$ by invoking \cite[Theorem 1, p. 38]{ALI1}. All these bounds are sharp for the function $f_{car}$ given by \eqref{fcar}. The main aim of the paper is to establish a linkage of the class $\Scar$ with other subclasses of starlike  functions through inclusion relations and radii constants. As an application, these results are applied to derive the properties of the partial sums and convolution.

%If $f\in\Scar$, then there is an analytic function $w$ in $\disc$ with $w(0)=0$ and $|w(z)|<1$ for all $z\in\disc$ such that $zf'(z)/f(z)=1+w(z)+(w(z))^2/2$. With $a_1=1$ and $z=re^{it},$ we have
% 	\begin{align*}
% 	2\pi\sum_{n=1}^{\infty}|a_n|^2 r^{2n}&=\int_{0}^{2\pi}|f(re^{it})|^2dt=\int_{0}^{2\pi}\left|\frac{2re^{it}f'(re^{it})}{2+2w(z)+w^2(z)}\right|^2dt\\
% 	&\geq \frac{4}{25} \int_{0}^{2\pi}|re^{it}f'(re^{it})|^2dt=\frac{8\pi}{25}\sum_{n=1}^{\infty}n^2|a_n|^2r^{2n}
% 	\end{align*}
% 	where $0<r<1$. On rearranging the terms and letting $r\rightarrow1^-,$ we get
% 	\[\sum_{n=2}^{\infty}(4n^2-25)|a_n|^2\leq 21.\]
% This shows that
%  \[|a_n|\leq\sqrt{\frac{21}{4n^2-25}}, \text{ for } n=3,4,5\dots.\]
%In particular, we have $|a_3|\leq \sqrt{21/11}\approx 1.3817$ and $|a_4|\leq \sqrt{7/13}\approx 0.733799$We now determine the sharp bounds of $|a_n|$ for $n=2,3,4$ and a better bound of $|a_5|$.

In Section 2, few lemmas have been proved that will be needed in the study of the class $\Scar$ in the subsequent sections. We evaluate the maximum and minimum of $\RE (\phi_{car}(z))$ over the circle $|z|=r$. Also, the radii $r_a$ and $R_a$ are computed so that the inclusion relation $\{w:|w-a|<r_a\}\subseteq \Omega_{car}\subseteq\{w:|w-a|<R_a\}$ is satisfied where $\phi_{car}(-1)<a<\phi_{car}(1)$. Also, a condition is determined under which the class $\Scar$ is closed under convolution with convex functions. Section 3 is devoted to establish the inclusion relations of the class $\Scar$ with various subclasses of starlike functions introduced in the literature. In addition, a coefficient inequality is derived to investigate the properties of second partial sum of a function $f\in \mathcal{A}$ belonging to well-known classes of univalent functions. Several radii constants associated with the class $\Scar$ are computed in Sections 4 and 5 of the paper. All the obtained results are best possible.

Let $\chi \in \mathcal{A}$ and $\mathcal{F}_i^\chi$ ($i=1,2,3$) be the classes of analytic functions defined as follows:
\[\mathcal{F}_1^\chi=\left\{f\in \mathcal{A}: \RE\left(\frac{f(z)}{g(z)}\right)>0 \mbox{ for some }g\in \mathcal{A} \mbox{ with }  \RE\left(\frac{g(z)}{\chi(z)}\right)>0\right\}\]
\[\mathcal{F}_2^\chi=\left\{f\in \mathcal{A}: \left|\frac{f(z)}{g(z)}-1\right|<1 \mbox{ for some }g\in \mathcal{A} \mbox{ with }  \RE\left(\frac{g(z)}{\chi(z)}\right)>0\right\}\]
\[\mathcal{F}_3^\chi=\left\{f\in \mathcal{A}: \RE\left(\frac{f(z)}{\chi(z)}\right)>0\right\}\]
Motivated by MacGregor \cite{MAC1, MAC2}, several authors have determined various radii constants for these classes for several choices of $\chi$ like $z$ \cite{ALI, MAC}, $z+z^2/2$ \cite{KANAGA}, $z/(1-z^2)$ \cite{KHATTER2}, $z/(1-z)^2$ \cite{AMVS} and $z/(1+z)$ \cite{ASHA}. In the last section of this paper, we determine the $\Scar$-radius for the classes $\mathcal{F}_i^\chi$ ($i=1,2,3$) for these choices of $\chi$.

 \section{Preliminary Lemmas}
In this section, we will prove some lemmas which will be needed in our further investigation. The following lemma is useful while computing the upper and lower bounds of $\RE(zf'(z)/f(z))$ for a function $f\in \Scar$.  	
\begin{lemma}\label{result1}
If $0<r<1,$ then the function $\phi_{car}$ defined by \eqref{eq1.1} satisfies	
	\[\underset{|z|=r}{\min}\RE(\phi_{car}(z))=\begin{cases}
	1-r+r^2/2,& 0<r\leq 1/2\\
	(3-2r^2)/4,& \displaystyle1/2\leq r<1
	\end{cases}\]	
	and
	\[\underset{|z|=r}{\max}\RE(\phi_{car}(z))=1+r+\frac{r^2}{2}=\phi_{car}(r).\]	
\end{lemma}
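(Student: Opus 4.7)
The plan is to parametrize the circle $|z|=r$ by $z=re^{i\theta}$, reduce the claim to a one-variable optimization problem, and then do a case analysis in $r$ according to which critical points actually lie in the admissible range for $\cos\theta$.

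First I would write out
\[
u(\theta):=\RE(\phi_{car}(re^{i\theta}))=1+r\cos\theta+\frac{r^2}{2}\cos(2\theta),
\]
and differentiate to get
\[
u'(\theta)=-r\sin\theta-r^2\sin(2\theta)=-r\sin\theta\,(1+2r\cos\theta).
\]
So the critical points come in two families: the \emph{boundary-type} points $\theta=0$ and $\theta=\pi$ (coming from $\sin\theta=0$), and the \emph{interior-type} points satisfying $\cos\theta=-1/(2r)$, which exist only when $2r\ge 1$, i.e.\ $r\ge 1/2$. Evaluating at the first two gives $u(0)=1+r+r^2/2=\phi_{car}(r)$ and $u(\pi)=1-r+r^2/2=\phi_{car}(-r)$, while at any interior critical point one has $\cos(2\theta)=2\cos^2\theta-1=1/(2r^2)-1$, and substituting yields
\[
u=1-\tfrac{1}{2}+\tfrac{1}{4}-\tfrac{r^2}{2}=\frac{3-2r^2}{4}.
\]

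For the maximum: at any critical point the value is either $\phi_{car}(r)$, $\phi_{car}(-r)$, or $(3-2r^2)/4$, and one checks trivially that $\phi_{car}(r)=1+r+r^2/2$ dominates both of the others on $(0,1)$, giving the stated maximum for all $r$. For the minimum I would split at $r=1/2$. When $0<r\le 1/2$ the only critical points are $\theta=0,\pi$, so the minimum is $1-r+r^2/2$. When $1/2\le r<1$ the interior critical point is active, and I would compare
\[
\frac{3-2r^2}{4}\quad\text{with}\quad 1-r+\frac{r^2}{2}=\frac{4-4r+2r^2}{4};
\]
their difference is $(3-2r^2)-(4-4r+2r^2)=-(1-2r)^2\le 0$, so $(3-2r^2)/4$ is the smaller value (with equality exactly at $r=1/2$, which reconciles the two cases).

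The only real subtlety is bookkeeping: making sure the sign of $u'$ is correct so that the interior critical points are actually minima (not saddles or maxima), and checking the matching of the two formulas at $r=1/2$. A brief sign-chart argument on $u'(\theta)=-r\sin\theta(1+2r\cos\theta)$ on $[0,\pi]$ (where $u$ attains its extrema by evenness of $\cos$ in $\theta$) handles both at once: when $r\ge 1/2$, $u'$ changes from negative to positive as $\theta$ crosses $\arccos(-1/(2r))$, confirming that these are local minima. This completes the identification of the extrema, and the formulas in the lemma follow.
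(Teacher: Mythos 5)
Your proof is correct and is essentially the paper's argument: the paper substitutes $x=\cos t$ and optimizes the quadratic $G(x)=1+rx+r^2x^2-r^2/2$ on $[-1,1]$, which produces exactly the same critical points ($x=\pm1$ and $x=-1/(2r)$, the latter admissible only for $r\ge 1/2$) and the same case analysis you carry out in the variable $\theta$. The value comparisons and the matching at $r=1/2$ all check out.
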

\begin{proof}
	For $z=re^{it} $ $(0\leq t\leq 2\pi),$ we have
	\[\RE(\phi_{car}(re^{it}))=1+r\cos t+\frac{r^2}{2}\cos 2t=1+rx+r^2x^2-\frac{r^2}{2}:=G(x)\]
	where $x=\cos t,$ $-1\leq x\leq1$. Note that $G'(x)=r+2r^2x$, $G''(x)=2r^2$ and $G'(-1/(2r))=0$. It is easily seen that $x_0=-1/(2r)\in [-1,1]$ if and only if $r\geq 1/2.$ Consider the following two cases:

	Case 1: For $0<r<1/2,$ $G'(x)>0$ and hence is an increasing function. Consequently $\max\{\RE(\phi_{car}(z)):|z|=r\}=G(1)=1+r+r^2/2$
	and $\min\{\RE(\phi_{car}(z)):|z|=r\}=G(-1)=1-r+r^2/2$.
	
Case 2: For $1/2\leq r<1$, $x_0=-1/(2r)$ is a point of relative minima as $G''(x_0)>0$. Also, $G(1)>G(-1)$ so that $\max \{\RE(\phi_{car}(z)):|z|=r\}=G(1)=1+r+r^2/2$ and $\min\{\RE(\phi_{car}(z)):|z|=r\}=G(x_0)=(3-2r^2)/4$.
\end{proof}

The next result is the main key lemma for determining the various inclusion results and result constants concerning the class $\Scar$.
\begin{lemma}\label{result7}
	For $1/2<a<5/2,$ let $r_a$ be given by
	\begin{align*}
	r_a=\begin{cases}
	(2a-1)/2, & 1/2<a\leq \displaystyle3/2\\
	(5-2a)/2,& 3/2\leq a< 5/2
	\end{cases}
	\end{align*}
	and $R_a$ be given by
	\begin{align*}
	R_a=\begin{cases}
	(5-2a)/2,&1/2<a\displaystyle\leq 7/6\\
	\sqrt{(2a-1)^3/(8(a-1))},& 7/6\leq a<5/2.
	\end{cases}	
	\end{align*}
	Then \[\{w:|w-a|<r_a\}\subseteq \Omega_{car}\subseteq\{w:|w-a|<R_a\} \]
\end{lemma}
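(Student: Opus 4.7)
The strategy is to observe that $\Omega_{car}$ is a bounded open domain containing $a\in(1/2,5/2)$ (since $\phi_{car}(-1)=1/2$ and $\phi_{car}(1)=5/2$ are on $\partial\Omega_{car}$ and $\Omega_{car}$ is starlike with respect to $1$, hence contains the real segment joining them). Consequently the largest disk centred at $a$ lying in $\Omega_{car}$ and the smallest disk centred at $a$ containing $\Omega_{car}$ have radii equal, respectively, to the minimum and the maximum of the single real-valued function $d(t):=|\phi_{car}(e^{it})-a|$ on $[0,2\pi]$. Thus both $r_a$ and $R_a$ reduce to extremising one function.

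Writing $\phi_{car}(e^{it})=1+\cos t+\tfrac12\cos 2t+i(\sin t+\tfrac12\sin 2t)$, a direct expansion (using $\cos 2t=2\cos^{2}t-1$) will collapse $d(t)^{2}$ to the quadratic
\[F(u):=2(1-a)u^{2}+(3-2a)u+a^{2}-a+\tfrac54,\qquad u=\cos t\in[-1,1],\]
with in particular $F(1)=(\tfrac52-a)^{2}$ and $F(-1)=(a-\tfrac12)^{2}$, corresponding to the two real boundary points $5/2$ and $1/2$. So the entire problem is to find $\min_{[-1,1]}F$ and $\max_{[-1,1]}F$.

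For the inner radius I would verify the two factorisations
\[F(u)-(a-\tfrac12)^{2}=(u+1)\bigl(2(1-a)u+1\bigr),\qquad F(u)-(\tfrac52-a)^{2}=(u-1)\bigl(2(1-a)u+5-4a\bigr).\]
A sign check on $[-1,1]$ then finishes the argument: when $1/2<a\le 3/2$ the linear factor $2(1-a)u+1$ is positive at both endpoints (it equals $2a-1>0$ at $u=-1$ and $3-2a\ge 0$ at $u=1$), so the first expression is $\ge 0$ throughout and $r_a=a-1/2$ is attained at $u=-1$. When $3/2\le a<5/2$ the factor $2(1-a)u+5-4a$ is nonpositive on $[-1,1]$ (its values at $u=\pm 1$ are $3-2a\le 0$ and $7-6a<0$), so $(u-1)(\cdot)\ge 0$ and $r_a=5/2-a$ is attained at $u=1$.

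For the outer radius, the unique critical point of $F$ (when $a\neq1$) is $u^{*}=(2a-3)/(4(1-a))$, and the condition $u^{*}\in[-1,1]$ simplifies to $a\geq 7/6$. For $1/2<a\le 7/6$ either $F$ is convex ($a<1$) or the vertex lies outside $[-1,1]$ ($1<a\le 7/6$), so the maximum is attained at an endpoint; since $F(1)-F(-1)=(5/2-a)^{2}-(a-1/2)^{2}=2(3-2a)>0$ in this range, we get $\max F=(\tfrac52-a)^{2}$, and hence $R_a=\tfrac52-a$. For $7/6\le a<5/2$ the parabola is strictly concave and $u^{*}$ is interior, so $\max F=F(u^{*})$; substituting $u^{*}$ and clearing denominators will show $(2a-3)^{2}+8(a-1)(a^{2}-a+\tfrac54)=(2a-1)^{3}$, yielding $F(u^{*})=(2a-1)^{3}/(8(a-1))$, which is $R_a^{2}$.

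The main obstacle is purely computational rather than conceptual: one needs to carry out the expansion of $|\phi_{car}(e^{it})-a|^{2}$ carefully to reach $F$, and then perform the algebraic simplification that reduces $F(u^{*})$ to the remarkably clean closed form $(2a-1)^{3}/(8(a-1))$. Once the two factorisations of $F(u)-F(\pm1)$ and the clean formula for $F(u^{*})$ are in hand, the case analysis is bookkeeping and the boundary values $a=3/2$ and $a=7/6$ match across both branches, confirming continuity of $r_a$ and $R_a$.
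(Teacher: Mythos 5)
Your proposal is correct and follows essentially the same route as the paper: both reduce the problem to extremising the quadratic $g(x)=2(1-a)x^{2}+(3-2a)x+a^{2}-a+\tfrac54$ in $x=\cos t$ on $[-1,1]$, with the same endpoint values $(a-\tfrac12)^{2}$, $(\tfrac52-a)^{2}$, the same vertex $x_{0}=(3-2a)/(4(a-1))$ entering $[-1,1]$ exactly when $a\ge 7/6$, and the same closed form $(2a-1)^{3}/(8(a-1))$ for the maximum. Your factorisations of $F(u)-F(\pm 1)$ are just a slightly different bookkeeping device for the minimum than the paper's sign analysis of $g'$, and all your identities check out.
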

\begin{proof}
For $t\in [0,2\pi]$, if we let $\cos t=x$, $-1\leq x\leq 1$, then the square of the distance from the point $(a,0)$ to the points on the cardioid $\phi_{car}(e^{it})$ is given by
\[g(x)=a^2-a+\frac{5}{4}+(3-2a)x+2(1-a)x^2.\]
Note that $g'(x)=3-2a+4(1-a)x$ and $g'(x_0)=0$ for $x_0=(3-2a)/(4(a-1))$. Also, $ x_0\in [-1,1]$ if and only if $a\geq 7/6.$  Moreover, $g''(x)=4(1-a)$ and $g(1)-g(-1)=6-4a.$ Consider the following two cases:

Case 1: For $1/2<a\leq 7/6,$ $g'(x)>0$ so that $g$ is an increasing function and hence $\min\{g(x):x\in [-1,1]\}=g(-1)$ and $\max\{g(x):x\in [-1,1]\}=g(1)$.
	
Case 2:	For $7/6\leq a < 5/2$, then $g''(x_0)<0$ and hence $g$ attains its relative maxima at $x_0$. If $7/6\leq a\leq 3/2$, then $g(1)\geq g(-1)$ and $\min\{g(x):x\in [-1,1]\}=g(-1)$. If $3/2\leq a<5/2$, then $g(-1)\geq g(1)$ and $\min\{g(x):x\in [-1,1]\}=g(1)$. In both the cases, $\max\{g(x):x\in [-1,1]\}=g(x_0)$.

Since $g(-1)=(a-1/2)^2$, $g(1)=(5-2a)^2/4$ and $g(x_0)=(2a-1)^3/(8(a-1))$, therefore the proof of the theorem is complete by observing that $r_a=\sqrt{ \min\{g(x):x\in [-1,1]\}}$ and $R_a=\sqrt{\max\{g(x):x\in [-1,1]\}}$. The cases $a=1$ ($r_a=1/2$, $R_a=3/2$) and $a=2$ ($r_a=1/2$, $R_a=\sqrt{27/8}$) of this lemma are illustrated in Figure \ref{lemma}.
\end{proof}

\begin{figure}[h]
	\begin{center}
		\subfigure[$a=1$]{\includegraphics[width=2in]{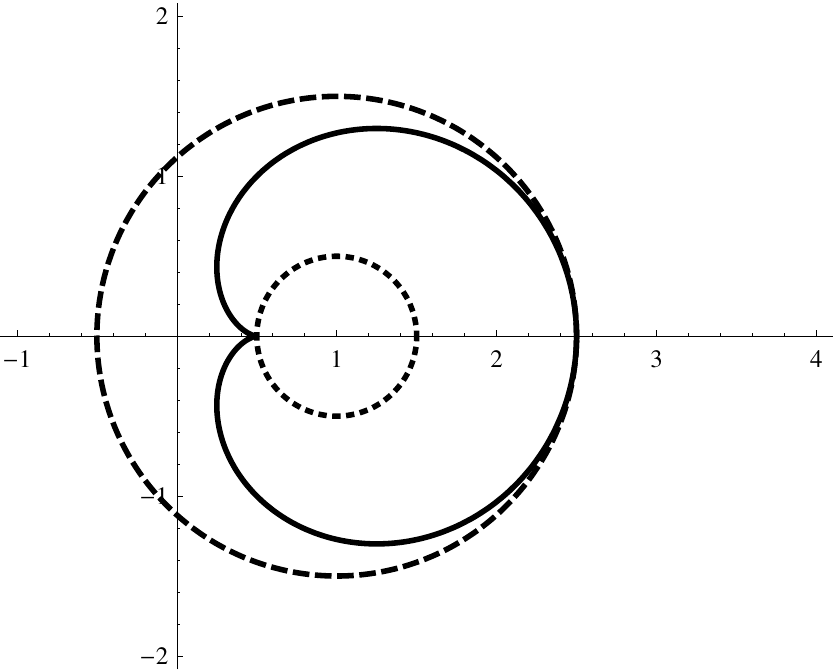}}\hspace{10pt}
		\subfigure[$a=2$]{\includegraphics[width=2in]{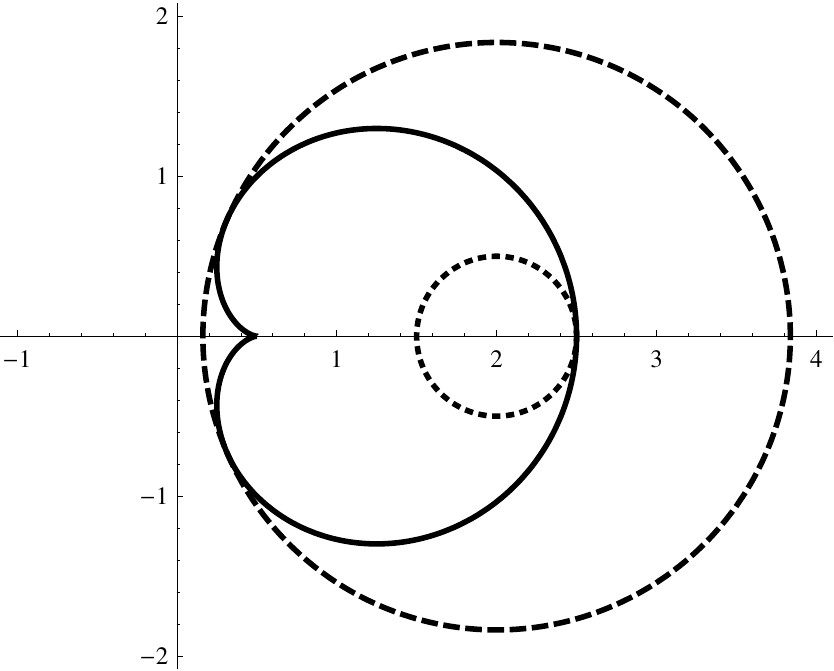}}\hspace{10pt}
		\caption{Illustration of Lemma \ref{result7}.}\label{lemma}
	\end{center}
\end{figure}

By \cite[Theorem 5, p.\ 167]{MaMinda}, if $\varphi(\mathbb{D})$ is convex, $f \in\mathcal{S}^*(\varphi)$ and $g \in \mathcal{K}$, then the convolution $f*g \in \mathcal{S}^*(\varphi)$. Note that
\[\RE\left(1+\frac{\phi_{car}''(z)}{\phi_{car}'(z)}\right)=1+\RE\left(\frac{z}{1+z}\right)=1-\frac{|z|}{1-|z|}\]
which is clearly positive if $|z|<1/2$, where $\phi_{car}$ is given by \eqref{eq1.1}. This shows that $\phi_{car}(\mathbb{D}_{1/2})$ is convex. Therefore, we have obtained the following result.

\begin{lemma}\label{result10}
Let $f\in\Scar$ and $g\in\mathcal{K}.$ Then the function $(f\ast g)(\rho z)/\rho\in\Scar$ for $0< \rho\leq 1/2.$
\end{lemma}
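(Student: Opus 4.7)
The plan is to combine the Ma--Minda convolution theorem cited just before the lemma (if $\varphi(\disc)$ is convex, $f\in\mathcal{S}^*(\varphi)$, $g\in\mathcal{K}$, then $f*g\in\mathcal{S}^*(\varphi)$) with the already verified fact that $\phi_{car}(\disc_{1/2})$ is convex. Note that the Ma--Minda theorem cannot be applied directly to $\Scar$ because $\phi_{car}(\disc)$ itself is not convex, so I will first dilate by $\rho$ and work with the auxiliary Ma--Minda function $\psi(z):=\phi_{car}(\rho z)$, whose image is automatically a subset of the convex region $\phi_{car}(\disc_{1/2})$ whenever $\rho\leq 1/2$.

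First I would use the elementary Hadamard-product identity
\[\frac{(f*g)(\rho z)}{\rho}=\left(\frac{f(\rho z)}{\rho}\right)*g(z)=:f_\rho*g,\]
which is immediate on the level of Taylor coefficients. This reduces the claim to showing $f_\rho*g\in\Scar$.

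Next I would show $f_\rho\in\mathcal{S}^*(\psi)$. Since $f\in\Scar$, there is a Schwarz function $w$ with $zf'(z)/f(z)=\phi_{car}(w(z))$. A direct computation gives
\[\frac{zf_\rho'(z)}{f_\rho(z)}=\frac{\rho z\,f'(\rho z)}{f(\rho z)}=\phi_{car}(w(\rho z))=\phi_{car}(\rho\,u(z)),\]
where $u(z):=w(\rho z)/\rho$ is again a Schwarz function by Schwarz's lemma applied to $w$. This exhibits $zf_\rho'/f_\rho\prec\psi$, as needed.

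Finally, since $\psi(\disc)=\phi_{car}(\disc_\rho)\subseteq\phi_{car}(\disc_{1/2})$ is convex, the Ma--Minda convolution theorem applied with $\varphi=\psi$ yields $f_\rho*g\in\mathcal{S}^*(\psi)$. Because $\psi\prec\phi_{car}$ trivially (through the Schwarz function $z\mapsto\rho z$), transitivity of subordination gives $\mathcal{S}^*(\psi)\subseteq\mathcal{S}^*(\phi_{car})=\Scar$, and we conclude $(f*g)(\rho z)/\rho\in\Scar$. The argument is essentially a packaging exercise around the cited Ma--Minda theorem; the only bookkeeping worth attention is the verification that $u$ in the middle step is a Schwarz function, and no genuine obstacle is anticipated.
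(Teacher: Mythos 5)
Your argument is correct and is essentially the paper's own proof: the paper simply invokes the Ma--Minda convolution theorem together with the convexity of $\phi_{car}(\mathbb{D}_{1/2})$, and you have supplied the dilation bookkeeping (the identity $(f\ast g)(\rho z)/\rho=f_\rho\ast g$, the subordination $zf_\rho'/f_\rho\prec\psi$ via the Schwarz lemma, and $\mathcal{S}^*(\psi)\subseteq\Scar$) that makes this rigorous. The only phrase worth tightening is ``$\psi(\mathbb{D})=\phi_{car}(\mathbb{D}_\rho)\subseteq\phi_{car}(\mathbb{D}_{1/2})$ is convex'': a subset of a convex set need not be convex, so you should instead observe that $\phi_{car}$ is a convex univalent function on $\mathbb{D}_{1/2}$ (this is what the real-part computation preceding the lemma shows), whence $\phi_{car}(\mathbb{D}_\rho)$ is itself convex for every $0<\rho\le 1/2$.
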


\section{Inclusion Relations}
For specific choices of $\varphi$, the class $\mathcal{S}^*(\varphi)$ reduces to several interesting subclasses of $\mathcal{S}^*$. To find the inclusion relations, firstly consider those subclasses which involve a parameter. For instance, $\mathcal{S}^*[A,B]:=\mathcal{S}^*((1+Az)/(1+Bz))$ is the Janowski class of starlike functions ($-1\leq B<A\leq1$), $\mathcal{S}^*(\alpha):=\mathcal{S}^*[1-2\alpha,-1]$ is the class of starlike functions of order $\alpha$ $(0\leq \alpha<1)$, and $\mathcal{S}\mathcal{S}^*(\beta):=\mathcal{S}^*(((1+z)/(1-z))^\beta)$ is the class of strongly starlike functions of order $\beta$ ($0<\beta\leq 1$). For $k\geq 0$, the class $k-\mathcal{S}^*$ of $k$-starlike functions introduced by Kanas and Wisniowska \cite{KANAS} consists of functions $f \in \mathcal{A}$ satisfying
\[\RE \left(\frac{zf'(z)}{f(z)}\right)>k\left|\frac{zf'(z)}{f(z)}-1\right|\quad (z\in \mathbb{D}).\]
In fact, $k-\mathcal{S}^*=\mathcal{S}^*(p_k)$ where $p_k$ is defined in \cite[p.\ 331-333]{KANAS}. For $0\leq \alpha<1$, the classes $\mathcal{S}^*_{e}(\alpha)=\mathcal{S}^*(\alpha+(1-\alpha)e^z)$ and $\mathcal{S}^*_{L}(\alpha)=\mathcal{S}^*(\alpha+(1-\alpha)\sqrt{1+z})$ were introduced by Khatter \emph{et al.} \cite{KHATTER} which reduces to the classes $\mathcal{S}^*_{e}$ \cite{MEND1} and $\mathcal{S}^*_{L}$ \cite{SOKOL} respectively for $\alpha=0$. Moreover, let $\mathcal{S}^*(\sqrt{1+cz})$ ($0<c\leq 1$) be the subclass of starlike functions introduced by Sok\'{o}\l \cite{SOKOL1} associated with right loop of the Cassinian ovals $(u^2+v^2)^2-2(u^2-v^2)=c^2-1$, which also reduces to the class $\mathcal{S}^*_{L}$ for $c=1$. For more details and generalizations, see \cite{ADAM, ALI3, SIM, LECKO, DENIZ, DENIZ1}. The first theorem of this section investigates the inclusion relations between these classes with the class $\Scar$.

\begin{theorem}\label{th4.1}
 		The class $\Scar$ satisfies the following relationships:
 		\begin{enumerate}
 			\item [(i)] $\Scar\subset\mathcal{S}^*(\alpha)$ for $0\leq\alpha\leq 1/4;$
 			\item[(ii)] $\Scar\subset\mathcal{S}\mathcal{S}^*(\beta)$ for $\beta_0\leq \beta\leq 1,$ where $\beta_0=(2/\pi)\tan^{-1}(3\sqrt{3/5}) \approx 0.743253$;
 			\item[(iii)] $k-\mathcal{S}^*\subset\Scar$ for $k\geq 5/3$;
 			\item[(iv)] $\mathcal{S}^*_{e}(\alpha)\subset\mathcal{S}^*_{car}$ for $\alpha_0\leq\alpha<1$, where $\alpha_0=(e-2)/(2(e-1))\approx 0.209011$;
 			\item[(v)] $\mathcal{S}^*_L(\alpha)\subset \mathcal{S}^*_{car}$ for $1/2\leq\alpha< 1$;
 \item[(vi)] $\mathcal{S}^*(\sqrt{1+cz})\subset \mathcal{S}^*_{car}$ for $0<c\leq 3/4$;
 \item[(vii)] $\mathcal{S}^*[A,B]\subset\Scar$, where $-1< B<A\leq1$ if one of the following conditions holds:
	\begin{enumerate}
		\item [(a)] $1-B^2< 2(1-AB)\leq 3(1-B^2)$ and $ 2A\leq 1+B,$
		\item[(b)] $3(1-B^2)\leq 2(1-AB)<5(1-B^2)$ and $2A\leq 3+5B;$
	\end{enumerate}
\item[(viii)] $\Scar\subset\mathcal{S}^*[1,-(M-1)/M]$ for $M\geq (3+\sqrt{5})/4\approx 1.309$;
 		\end{enumerate}
 	\end{theorem}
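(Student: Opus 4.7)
My plan hinges on the observation that for any two Ma--Minda functions $\psi_1, \psi_2$, the inclusion $\mathcal{S}^*(\psi_1) \subseteq \mathcal{S}^*(\psi_2)$ is equivalent (by univalence of $\psi_2$) to the domain containment $\psi_1(\disc) \subseteq \psi_2(\disc)$. So each of the eight parts reduces to comparing $\Omega_{car}$ with the image of the relevant comparison function, with Lemmas \ref{result1} and \ref{result7} as the main tools, supplemented by direct boundary parametrizations when needed.

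For part (i), Lemma \ref{result1} gives $\min_{|z|=r}\RE\phi_{car}(z) = (3-2r^2)/4 \to 1/4$ as $r \to 1^-$, so $\Omega_{car} \subseteq \{w : \RE w > 1/4\}$ and hence $\Scar \subseteq \mathcal{S}^*(1/4) \subseteq \mathcal{S}^*(\alpha)$ whenever $\alpha \leq 1/4$. For part (ii) I would parametrize $\phi_{car}(e^{it}) = x(t) + iy(t)$ with $x(t) = \tfrac12 + \cos t + \cos^2 t$ and $y(t) = \sin t(1+\cos t)$ and locate the maximum of $\arg w$ by solving $y'x - yx' = 0$; after cancellation this collapses to the linear equation $-\tfrac12 - 2\cos t = 0$, yielding $\cos t = -1/4$ and $\tan(\arg w) = 3\sqrt{15}/5 = 3\sqrt{3/5}$, whence $\beta_0$. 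For part (viii), the class $\mathcal{S}^*[1, -(M-1)/M]$ corresponds to the open disk $\{|w - M| < M\}$, so by Lemma \ref{result7} I need $R_M \leq M$; the resulting inequality $(2M-1)^3 \leq 8M^2(M-1)$ simplifies to $4M^2 - 6M + 1 \geq 0$, whose larger root is $(3+\sqrt{5})/4$.

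Parts (iii) and (vii) fit Lemma \ref{result7} directly. In (vii) the Janowski function $(1+Az)/(1+Bz)$ maps $\disc$ onto the disk with center $a = (1-AB)/(1-B^2)$ and radius $r = (A-B)/(1-B^2)$; imposing $r \leq r_a$ in the two ranges $a \in (1/2, 3/2]$ and $a \in [3/2, 5/2)$ produces, after factoring through $(1+B)$ and $(1-B)$ respectively, exactly the conditions (a) and (b). In (iii) with $k > 1$, a short calculation shows $p_k(\disc)$ is the elliptic region with center $c = k^2/(k^2-1)$, horizontal semi-axis $A = k/(k^2-1)$ and vertical semi-axis $1/\sqrt{k^2-1} \leq A$, so it lies inside the disk $\{|w-c| < A\}$; requiring this disk to sit inside $\Omega_{car}$ via Lemma \ref{result7} reduces in the binding case $c \in [3/2, 25/16]$ to $(3k-5)(k+1) \geq 0$, i.e., $k \geq 5/3$.

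Parts (iv), (v), and (vi) are the hardest, since $\psi(\disc)$ is neither a disk nor an ellipse: it is a deformed logarithmic region in (iv) and a (translated, rescaled) half-lemniscate in (v) and (vi). The necessary real-axis conditions follow from evaluating $\psi(-1)$: namely $\alpha + (1-\alpha)/e \geq 1/2$ gives $\alpha \geq (e-2)/(2(e-1)) = \alpha_0$ in (iv), $\alpha \geq 1/2$ in (v), and $\sqrt{1-c} \geq 1/2$ gives $c \leq 3/4$ in (vi). The main obstacle is the two-dimensional sufficiency: Lemma \ref{result7} cannot be applied directly because no small coaxial disk encloses $\psi(\disc)$. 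My intended route is to substitute the boundary parametrization $\psi(e^{it}) = u(t) + iv(t)$ into the cardioid inequality
\[(4(u^2+v^2) - 8u - 1)^2 + 4(4(u^2+v^2) - 12u + 1) \leq 0\]
and verify it pointwise in $t$, with equality achieved precisely at $t = \pi$ when the sharp bound is met. The trigonometric sign verification at the extremal parameter is the main technical hurdle in the theorem.
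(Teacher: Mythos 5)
Your proposal is correct, and for parts (i), (ii), (vii) and (viii) it coincides with the paper's argument essentially step for step: the same appeal to Lemma~\ref{result1} for the bound $\RE w>1/4$, the same critical point $\cos t=-1/4$ giving $\tan(\arg w)=3\sqrt{15}/5=3\sqrt{3/5}$, the same translation of (vii)(a),(b) into $1/2<a\le 3/2$, $r\le a-1/2$ and $3/2\le a<5/2$, $r\le 5/2-a$ fed into Lemma~\ref{result7}, and the same reduction of (viii) to $4M^2-6M+1\ge 0$ with larger root $(3+\sqrt5)/4$. Where you genuinely diverge is in how the two-dimensional containments are certified. For (iii) the paper checks only the horizontal extremes $\lambda\pm a$ of the ellipse and then defers to a figure; your observation that the vertical semi-axis $1/\sqrt{k^2-1}$ never exceeds the horizontal one $k/(k^2-1)$, so the whole elliptic region sits inside the circumscribed disk $\{w:|w-\lambda|<a\}$ to which Lemma~\ref{result7} applies directly, replaces the figure by a one-line inequality and is a genuine (if small) improvement; note that at $k=5/3$ the center $25/16$ lies in $[3/2,5/2)$, so the binding bound is indeed $r_a=(5-2a)/2$, i.e.\ $\lambda+a\le 5/2$. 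For (iv)--(vi) the paper likewise extracts only the real-axis necessary conditions from $\psi(\pm1)$ and settles the full containment $\psi(\disc)\subset\Omega_{car}$ by pictures; your plan to substitute $\psi(e^{it})$ into the defining quartic of the cardioid and verify its sign is the honest algebraic substitute, but it is the one place where you announce rather than execute the work, and if you carry it out you should also record the two standard supporting facts it relies on: that the negativity set of the quartic is precisely the interior of the cardioid (the quartic equals $-3$ at $w=1$ and has no spurious real components), and that boundary containment together with $\psi(0)=1\in\Omega_{car}$ forces $\psi(\disc)\subseteq\Omega_{car}$ because $\psi$ is univalent, so $\partial\psi(\disc)=\psi(\partial\disc)$. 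Relative to the paper's own level of rigor you lose nothing in (iv)--(vi) and gain something in (iii).
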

 \begin{proof}
For (i) and (ii), let $f\in \Scar$. Then $zf'(z)/f(z)\prec \phi_{car}(z)$, where $\phi_{car}$ is given by \eqref{eq1.1} and $\phi_{car}(\mathbb{D})=\Omega_{car}$. By Lemma \ref{result1}, we have
 		\[\RE\left(\frac{zf'(z)}{f(z)}\right)>\underset{|z|=1}{\min}\RE(\phi_{car}(z))=\frac{1}{4}\]
so that $\Scar\subset\mathcal{S}^*(1/4).$ Also
\begin{align*}
 		\left|\arg\left(\frac{zf'(z)}{f(z)}\right)\right|&<\underset{|z|=1}{\max}\arg(\phi_{car}(z))= \underset{t\in(-\pi,\pi]}{\max}\arg(\phi_{car}(e^{it}))\\
 		&=\max_{t\in(-\pi,\pi]} \tan^{-1}(g(t))=\tan^{-1}(\max_{t\in(-\pi,\pi]}g(t))
 		\end{align*}
 where $g(t)=2\sin t(1+\cos t)/(1+2\cos t+2\cos^2 t)$. A simple calculation shows that $g$ attains its maximum value at $t_0=\cos^{-1}(-1/4)$ and
 		\[\left|\arg\left(\frac{zf'(z)}{f(z)}\right)\right|<\tan^{-1}(g(t_0))=\tan^{-1}\left(3\sqrt{\frac{3}{5}}\right).\]
Consequently $|\arg(zf'(z)/f(z))|\leq \beta_0\pi/2$ where $\beta_0=(2/\pi)\tan^{-1}(3\sqrt{3}/5)$ and hence $f \in \mathcal{SS}^*(\beta_0)$. The curves $\gamma_1: \RE w=1/4$ and $\gamma_2: \arg w=\tan^{-1}(3\sqrt{3}/5)$ are depicted in Figure \ref{inclusion}((a) and (b)) which shows that the constants $1/4$ and $\beta_0$ are best possible.

For (iii), let $f\in k-\mathcal{S}^*$. Then the quantity $zf'(z)/f(z)$ lies inside the domain $\Gamma_k=\{w:\mathbb{C}:\RE w>k|w-1|\}$ which represents the whole right half-plane for $k=0$, a hyperbolic region for $0<k<1$, a parabolic region for $k=1$ and an elliptic region for $k>1$. For $k>1$, the boundary of $\Gamma_k$ is an ellipse with the equation
\[\frac{(x-\lambda)^2}{a^2}+\frac{y^2}{b^2}=1,\]
where $\lambda=k^2/(k^2-1)$, $a=k/(k^2-1)$ and $b=1/\sqrt{k^2-1}$. This ellipse lies inside $\Omega_{car}$ if $\lambda-a\geq 1/2$ and $\lambda+a\leq 5/2$. As $k>1$, the former condition $\lambda-a=k/(k+1)>1/2$ is obviously satisfied, while the later condition $\lambda+a=k/(k-1)\leq 5/2$ yields $k\geq 5/3$. The domain $\Gamma_{5/3}$ lies completely inside $\Omega_{car}$ which is evident from Figure \ref{inclusion}(c) in which the boundary $\partial \Gamma_{5/3}=\gamma_3$ is illustrated. Moreover, as $k$ increases, the domains $\Gamma_k$ keeps on shrinking, therefore $f\in \Scar$ for $k\geq 5/3$ and this constant is best possible.

\begin{figure}[h]
	\begin{center}
		\subfigure[$\gamma_1$]{\includegraphics[width=1.2in]{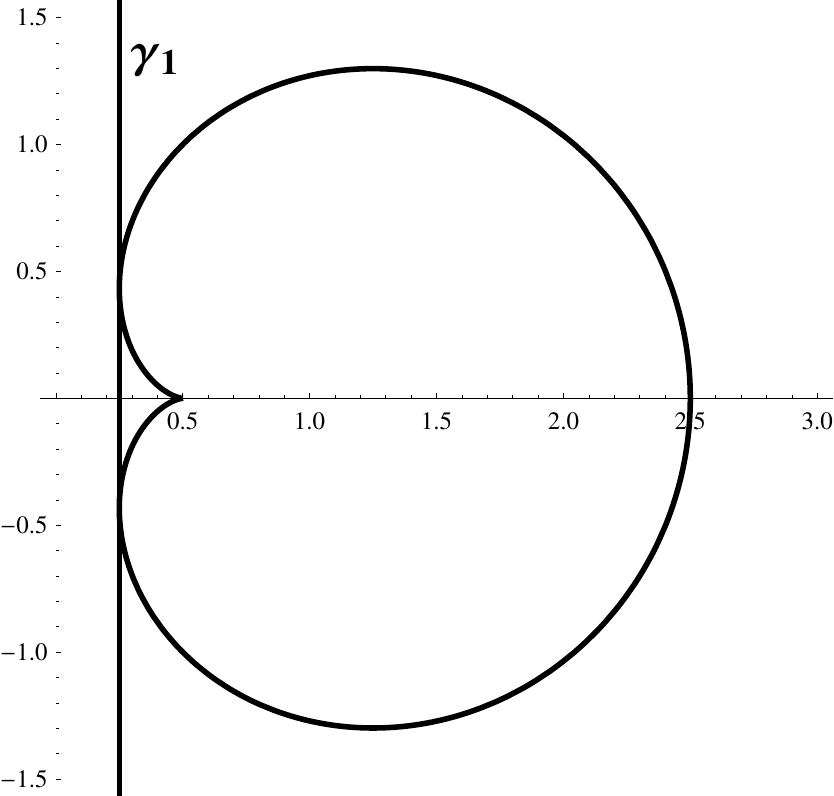}}\hspace{10pt}
		\subfigure[$\gamma_2$]{\includegraphics[width=1.2in]{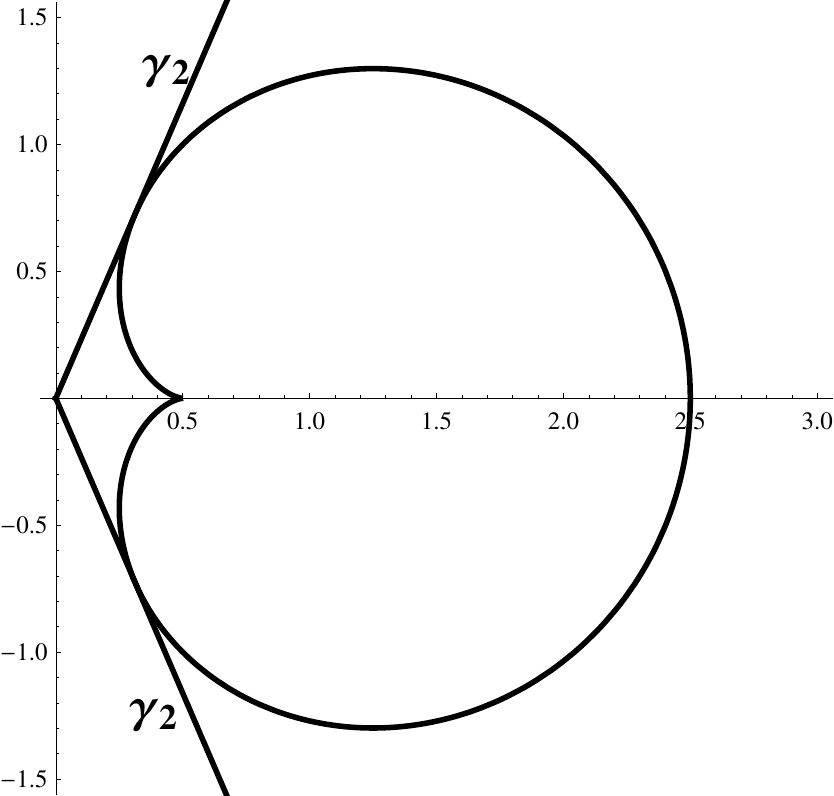}}\hspace{10pt}
		\subfigure[$\gamma_3$]{\includegraphics[width=1.2in]{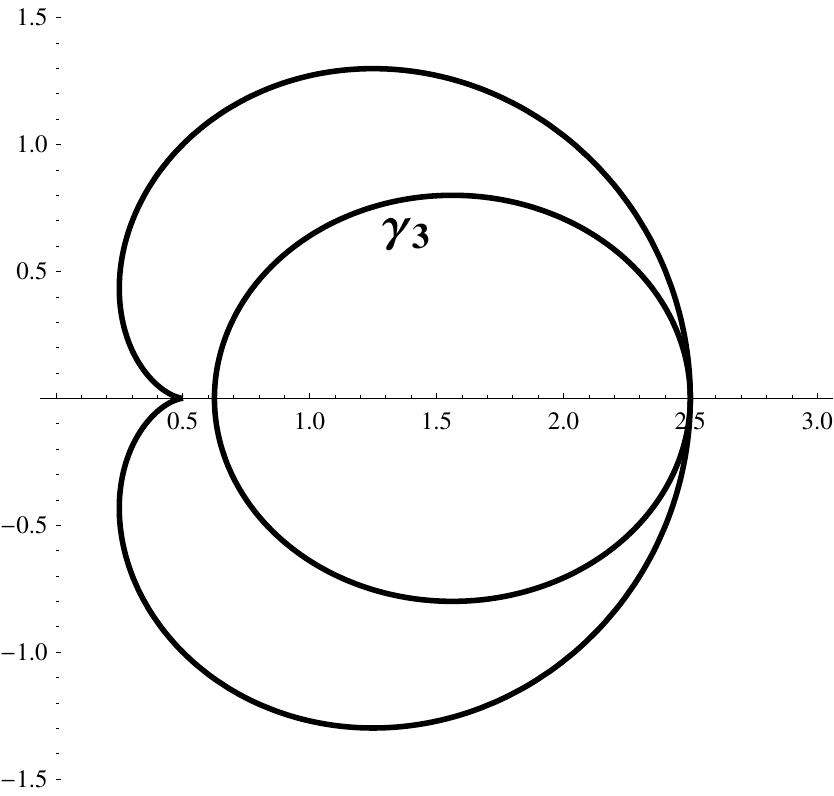}}\hspace{10pt}
\subfigure[$\gamma_4$]{\includegraphics[width=1.2in]{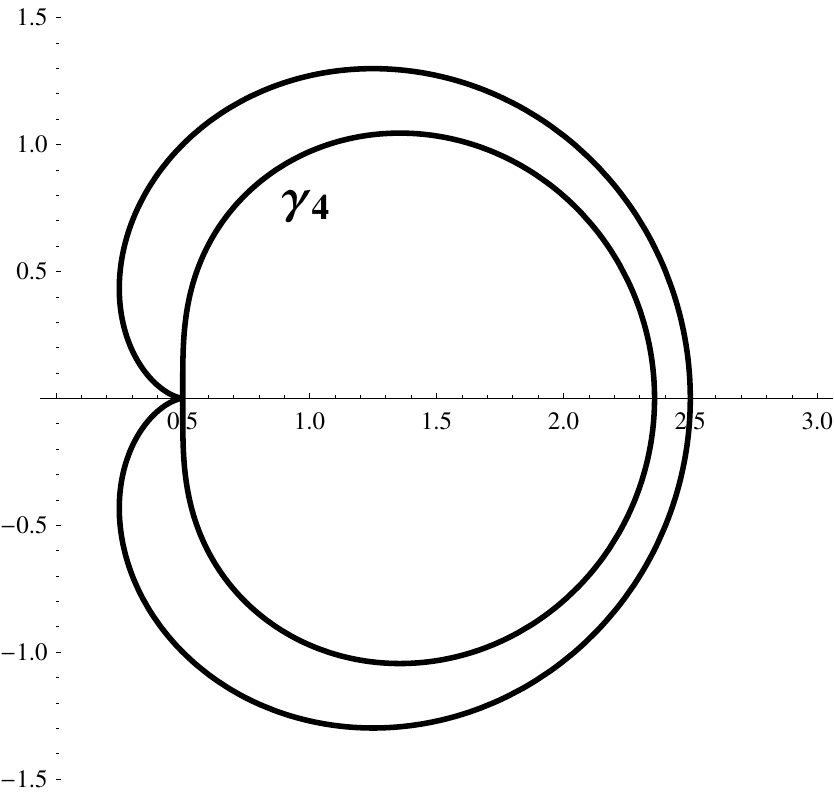}}\hspace{10pt}
\subfigure[$\gamma_5$]{\includegraphics[width=1.2in]{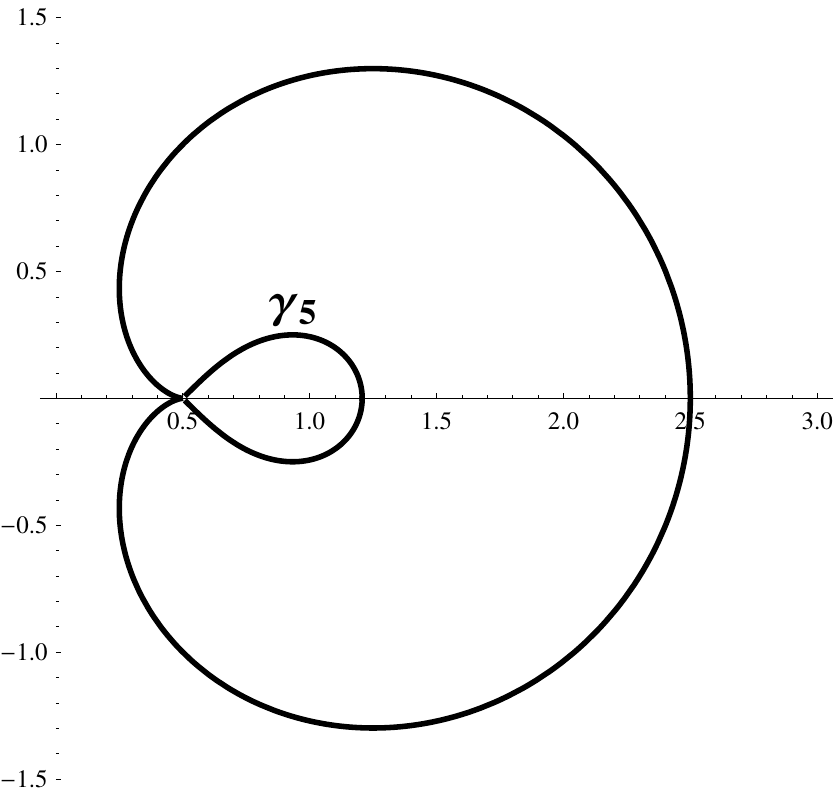}}\hspace{10pt}
\subfigure[$\gamma_6$]{\includegraphics[width=1.2in]{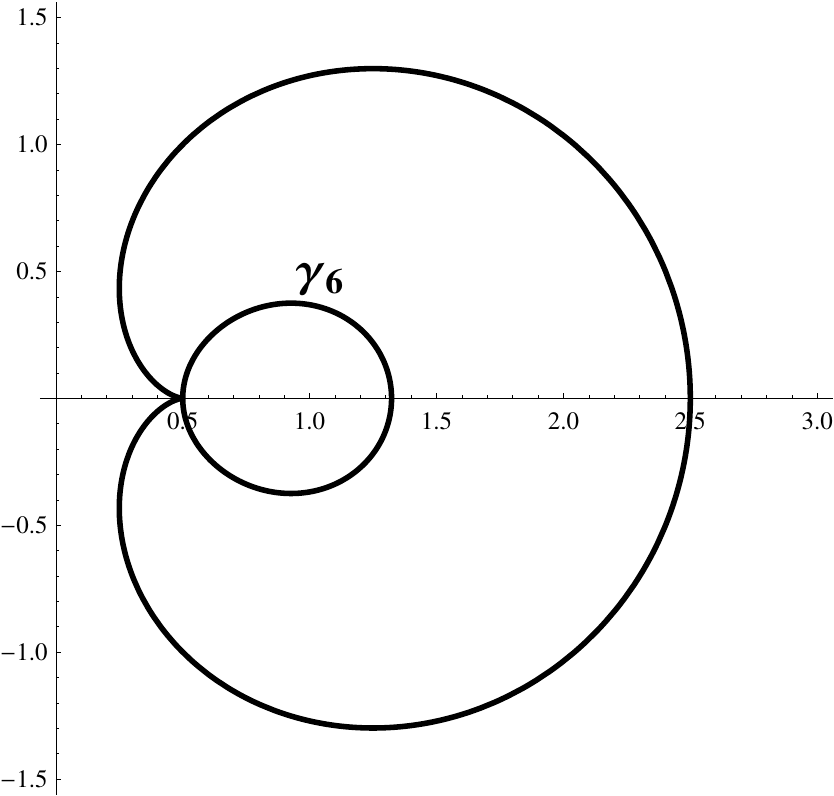}}\hspace{10pt}
\subfigure[$\gamma_7$]{\includegraphics[width=1.2in]{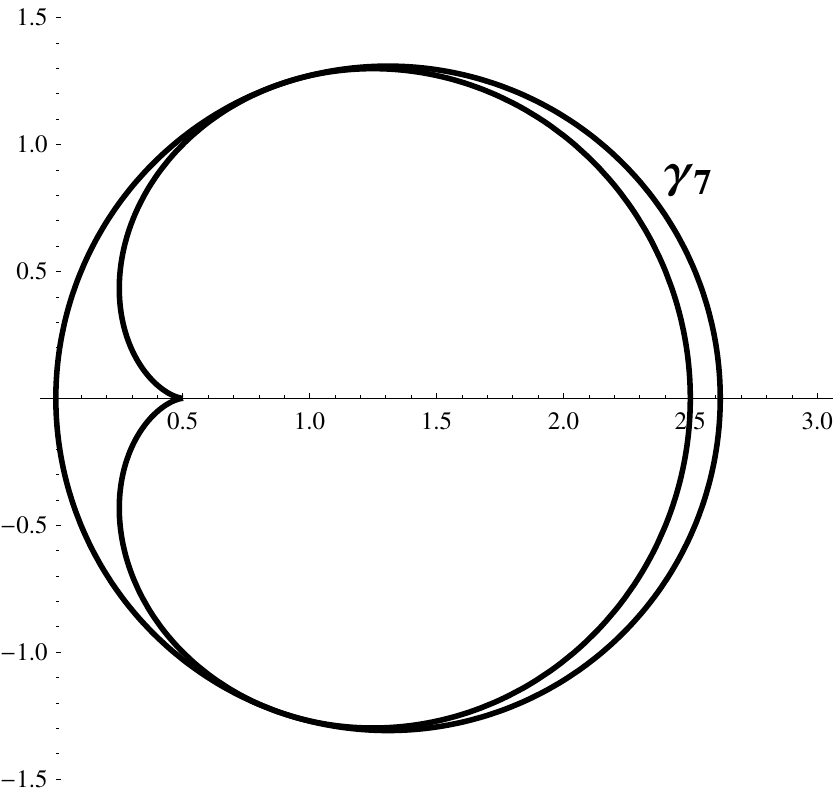}}\hspace{10pt}
		\caption{Inclusion Relations.}\label{inclusion}
	\end{center}
\end{figure}

For the proof of (iv), note that if $f \in\mathcal{S}^*_{e}(\alpha)$, then $zf'(z)/f(z)\prec \psi_\alpha(z)$ where $\psi_\alpha(z)=\alpha+(1-\alpha)e^z$ and $zf'(z)/f(z)$ lies inside the domain $D_\alpha=\{w:|\log((w-\alpha)/(1-\alpha))|<1\}$. By \cite[Lemma 2.1, p.\ 236]{KHATTER}, we have
\[\alpha+(1-\alpha)e^{-1}=\underset{|z|=1}{\min}\RE(\psi_\alpha(z))<\RE\left(\frac{zf'(z)}{f(z)}\right)<\underset{|z|=1}{\max}\RE(\psi_\alpha(z))=\alpha+(1-\alpha)e.\]
The condition $\alpha+(1-\alpha)e\leq 5/2$ gives $\alpha\geq (2e-5)/(2(e-1))\approx 0.127$ and the condition $\alpha+(1-\alpha)e^{-1}\geq 1/2$ provides $\alpha\geq(e-2)/(2(e-1))\approx 0.209.$ Therefore both the conditions are simultaneously satisfied for $\alpha\geq (e-2)/(2(e-1))=\alpha_0$. Moreover, $D_\alpha\subset D_{\alpha_0}\subset \Omega_{car}$ for $\alpha\geq \alpha_0$ (see Figure \ref{inclusion}(d), where $\gamma_4=\partial D_{\alpha_0}$). Thus $f \in \Scar$ for all $\alpha_0\leq \alpha<1$ and the bound $\alpha_0$ is best possible.

To prove (v), let $f\in\mathcal{S}^*_L(\alpha)$. Then $zf'(z)/f(z)\prec \nu_\alpha(z)$, where $\nu_\alpha(z)=\alpha+(1-\alpha)\sqrt{1+z}$. Also, the quantity $zf'(z)/f(z)$ lies inside the domain $G_\alpha=\{w:|((w-\alpha)/(1-\alpha))^2-1|<1\}$. Again \cite[Lemma 2.1, p.\ 236]{KHATTER} shows that
\[\alpha=\underset{|z|=1}{\min}\RE(\nu_\alpha(z))<\RE\left(\frac{zf'(z)}{f(z)}\right)<\underset{|z|=1}{\max}\RE(\nu_\alpha(z))=\alpha+(1-\alpha)\sqrt{2}.\]
It is clear that $\alpha+(1-\alpha)\sqrt{2}<1+\sqrt{2}<5/2$ and we must have $\alpha\geq 1/2$. Also, by Figure \ref{inclusion}(e), it is clear that the domain $G_{1/2}$ lies inside $\Omega_{car}$, where $\gamma_5=\partial G_{1/2}$. Also, $G_{\alpha_1}\subset G_{\alpha_2}$ for $\alpha_1\geq \alpha_2$. This forces us to conclude that $f \in \Scar$ for $1/2\leq \alpha< 1$ with $1/2$ as the best possible bound.

For (vi), let $f \in\mathcal{S}^*(\sqrt{1+cz})$. Then the quantity $zf'(z)/f(z)$ lies inside the domain $H_\alpha=\{w:|w^2-1|<c,\RE w>0\}$ and
\[\sqrt{1-c}<\RE\left(\frac{zf'(z)}{f(z)}\right)<\sqrt{1+c}.\]
Note that $\sqrt{1+c}\leq \sqrt{2}<5/2$ and $\sqrt{1-c}\geq 1/2$ provided $c\leq 3/4$. The similar reasoning shows that $f\in \Scar$ for $0<c\leq 3/4$ and this bound cannot be improved. Figure \ref{inclusion}(f) illustrates that the boundary $\partial H_{3/4}$ lies inside the cardioid.

For the part (vii), since $f\in\mathcal{S}^*[A,B]$, $zf'(z)f(z)\prec (1+Az)/(1+Bz)$. By \cite[Lemma 2.1, p.267]{RAVI}, it follows that
	\begin{align}\label{eqn11}
	\left|\frac{zf'(z)}{f(z)}-\frac{1-AB}{1-B^2}\right|\leq \frac{A-B}{1-B^2}
	\end{align}
which represents a disk with center $a:=(1-AB)/(1-B^2)$ and radius $r:=(A-B)/(1-B^2).$ The inequalities in part (a) are equivalent to $1/2<a\leq 3/2$ and $r\leq a-1/2$. Similarly, the inequalities in part (b) are equivalent to $3/2\leq a<5/2$ and $r\leq 5/2-a$. By Lemma \ref{result7}, the disk \eqref{eqn11} lies inside the domain $\Omega_{car}$ and hence $f \in \Scar$.

In the last part (viii), let $f \in \Scar$. Then $zf'(z)/f(z)\in \Omega_{car}$ for all $z\in \mathbb{D}$. Under the notations of Lemma \ref{result7}, $\Omega_{car}\subseteq\{w:|w-a|<R_a\}$ and note that $a=R_a$ if and only if $a=(3+\sqrt{5})/4$. These observations lead us to conclude that
\[\left|\frac{zf'(z)}{f(z)}-M_0\right|<M_0\]
where $M_0=(3+\sqrt{5})/4$. Thus $f\in \mathcal{S}^*[1,-(M-1)/M]$ for $M\geq M_0$ (see Figure \ref{inclusion}(g), $\gamma_7: |w-M_0|=M_0$).
\end{proof}

The particular choices of $A$ and $B$ in $\mathcal{S}^*[A,B]$ leads to several well-known classes of $\mathcal{S}^*$ which are extensively studied in literature. The class $\mathcal{S}^*[1,-(M-1)/M]$ ($M>1/2$) discussed in Theorem \ref{th4.1}(viii) was introduced by Janowski \cite{JANOWSKI1}. Similarly, the classes $\mathcal{S}^*[1-\alpha,0]$ $(0\leq \alpha<1)$ and $\mathcal{S}^*[\alpha,-\alpha]$ $(0<\alpha\leq 1)$ were introduced by Singh \cite{RAM} and Padmanabhan \cite{PADMANABHAN} respectively. The inclusion relations with these two classes and further applications are discussed in the next two corollaries.

\begin{corollary}\label{cor4.2}
$\mathcal{S}^*[1-\alpha,0]\subset\Scar$ for all $\alpha\in [1/2,1)$ and $\mathcal{S}^*[\alpha,-\alpha]\subset\Scar$ for all $\alpha\in (0,1/3]$. In particular, if $f\in \mathcal{A}$ satisfies
\[\left|\frac{zf'(z)}{f(z)}-1\right|<\frac{1}{2}\quad \mbox{or}\quad\left|\frac{zf'(z)/f(z)-1}{zf'(z)/f(z)+1}\right|<\frac{1}{3}\]
for all $z\in \mathbb{D}$, then $f \in \Scar$.
\end{corollary}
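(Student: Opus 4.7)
The plan is to derive both inclusions as direct specializations of Theorem~\ref{th4.1}(vii), which gives sufficient conditions on the Janowski parameters $(A,B)$ under which $\mathcal{S}^*[A,B]\subset\Scar$. I will substitute the appropriate pairs $(A,B)$ for each of the two named subfamilies, then verify the inequalities in part (a) of that theorem; part (b) will not be needed since each of the resulting disks satisfies $a\leq 3/2$.

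For the first inclusion, I take $A=1-\alpha$ and $B=0$, so that $\mathcal{S}^*[A,B]=\mathcal{S}^*[1-\alpha,0]$ corresponds to the condition $|zf'(z)/f(z)-1|<1-\alpha$. With these values, $1-B^2=1$, $1-AB=1$ and $1+B=1$, so the two inequalities $1-B^2<2(1-AB)\le 3(1-B^2)$ reduce to the trivially true $1<2\le 3$, while the remaining hypothesis $2A\le 1+B$ becomes $2(1-\alpha)\le 1$, i.e., $\alpha\ge 1/2$. Applying Theorem~\ref{th4.1}(vii)(a) then yields $\mathcal{S}^*[1-\alpha,0]\subset\Scar$ for all $\alpha\in[1/2,1)$.

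For the second inclusion, I take $A=\alpha$ and $B=-\alpha$, so that $\mathcal{S}^*[\alpha,-\alpha]$ is described by $|(zf'/f-1)/(zf'/f+1)|<\alpha$. Here $1-B^2=1-\alpha^2$, $1-AB=1+\alpha^2$, and $1+B=1-\alpha$. The inequality $1-\alpha^2<2(1+\alpha^2)$ is automatic, and $2(1+\alpha^2)\le 3(1-\alpha^2)$ reduces to $5\alpha^2\le 1$, i.e.\ $\alpha\le 1/\sqrt{5}$. The remaining condition $2A\le 1+B$ becomes $2\alpha\le 1-\alpha$, i.e.\ $\alpha\le 1/3$. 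Since $1/3<1/\sqrt{5}$, both inequalities are simultaneously met precisely when $\alpha\le 1/3$, which gives the stated range. The ``in particular'' clause is then immediate by taking $\alpha=1/2$ in the first inclusion and $\alpha=1/3$ in the second, which turn the subordination/parameter descriptions into the explicit disk inequalities stated in the corollary.

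I anticipate no serious obstacle; the only book-keeping point worth double-checking is that $(A,B)=(\alpha,-\alpha)$ with $\alpha\le 1/3$ indeed lands in case (a) rather than case (b) of Theorem~\ref{th4.1}(vii). This can be confirmed by computing the disk center $a=(1-AB)/(1-B^2)=(1+\alpha^2)/(1-\alpha^2)$ and observing $a\le 3/2$ exactly when $5\alpha^2\le 1$, which is implied by $\alpha\le 1/3$; hence case (a) is the correct regime and no appeal to case (b) is required.
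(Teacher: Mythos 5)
Your proposal is correct and follows exactly the paper's own route: the paper also deduces both inclusions by verifying that the pairs $(A,B)=(1-\alpha,0)$ and $(A,B)=(\alpha,-\alpha)$ satisfy condition (a) of Theorem~\ref{th4.1}(vii), and your parameter checks (including the observation that $\alpha\le 1/3$ forces the center into the regime $a\le 3/2$) are accurate. The only difference is that you write out the arithmetic the paper leaves implicit.
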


\begin{proof}
The inclusion relations are valid since $A$ and $B$ satisfy the condition (a) of Theorem \ref{th4.1}(vii).
\end{proof}

\begin{corollary}\label{cor}
(i) If $f \in \mathcal{A}$ is given by \eqref{eq} such that
\[\sum_{n=2}^\infty (2n-1)|a_n|\leq 1\]
then $f \in \Scar$. (ii) A function $f_n(z)=z+a_nz^n$ $(n=2,3,\ldots)$ belongs to the class $\Scar$ if and only if $|a_n|\leq1/(2n-1).$
\end{corollary}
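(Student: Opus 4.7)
The plan for part (i) is to leverage Corollary \ref{cor4.2} by showing that the coefficient hypothesis forces $|zf'(z)/f(z) - 1| < 1/2$ throughout $\disc$. Writing
$$\frac{zf'(z)}{f(z)} - 1 = \frac{\sum_{n=2}^{\infty}(n-1)\,a_n z^{n-1}}{1 + \sum_{n=2}^{\infty} a_n z^{n-1}},$$
I would apply the triangle inequality in the numerator and the reverse triangle inequality in the denominator, using $|z|<1$ strictly. An elementary rearrangement shows that the hypothesis $\sum (2n-1)|a_n| \leq 1$ is exactly what is needed to keep the resulting majorant at most $1/2$; Corollary \ref{cor4.2} then delivers $f \in \Scar$.

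For part (ii), the sufficiency is an immediate specialization of (i), since $\sum_{k=2}^{\infty}(2k-1)|a_k|$ collapses to the single term $(2n-1)|a_n|$. For necessity, I would observe that
$$\frac{zf_n'(z)}{f_n(z)} = \frac{1 + n a_n z^{n-1}}{1 + a_n z^{n-1}}$$
is obtained by composing the monomial $z \mapsto a_n z^{n-1}$ with the Möbius map $w \mapsto (1+nw)/(1+w)$. As $z$ ranges over $\disc$, $w$ fills the open disk $|w| < |a_n|$, and a Möbius transformation sends disks to disks; since the one at hand preserves the real axis, the image is an open disk symmetric about $\mathbb{R}$. Its extreme real points are the images of the real diameter $[-|a_n|, |a_n|]$, namely $(1 - n|a_n|)/(1 - |a_n|)$ (leftmost) and $(1 + n|a_n|)/(1 + |a_n|)$ (rightmost).

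Subordination of $zf_n'/f_n$ to $\phi_{car}$ forces this image disk into $\Omega_{car}$. A brief check, namely that the imaginary part of $\phi_{car}(re^{it})$ vanishes in $\disc$ only for $t \in \{0, \pi\}$, identifies $\Omega_{car} \cap \mathbb{R} = (\phi_{car}(-1), \phi_{car}(1)) = (1/2, 5/2)$, so the leftmost real point of the image disk must be at least $1/2$. Setting $(1 - n|a_n|)/(1-|a_n|) \geq 1/2$ rearranges precisely to $|a_n| \leq 1/(2n-1)$. The individual calculations are routine; the conceptual point, and the mild obstacle, is recognizing that the cusp of $\Omega_{car}$ at $w = 1/2$ is the binding geometric constraint, so that testing only on the real axis already captures the sharp bound and no finer analysis of the cardioid's boundary is required.
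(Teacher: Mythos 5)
Your argument is correct and is essentially the paper's: part (i) is exactly the Silverman-type estimate the paper cites (the rearrangement $2\sum_{n\ge2}(n-1)|a_n|\le 1-\sum_{n\ge2}|a_n|$ is precisely the hypothesis, with $\sum|a_n|\le 1/3$ keeping the denominator positive), and in (ii) you compute the same image disk the paper does and impose the same binding constraint at the cusp, since your leftmost point $(1-n|a_n|)/(1-|a_n|)\ge 1/2$ is identical to the paper's condition that the radius not exceed the center minus $1/2$ from Lemma~\ref{result7}. The only detail worth adding is the paper's one-line observation that $f_n\in\Scar\subset\mathcal{S}^*$ gives $|a_n|\le 1/n<1$, which is what places the M\"obius pole $w=-1$ outside $|w|\le|a_n|$ and ensures the image is a bounded disk with the real diameter you describe.
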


\begin{proof}
If a function $f\in \mathcal{A}$ satisfies the given condition, then by the proof of \cite[Theorem 1, p.\ 110]{SILVERMAN}, it is evident that
\[\left|\frac{zf'(z)}{f(z)}-1\right|<\frac{1}{2}\]
for all $z\in \mathbb{D}$. Thus $f \in\Scar$ by Corollary \ref{cor4.2}. This proves part (i).

For the proof of (ii), suppose that the function $f_n(z)=z+a_nz^n\in \Scar$ $(n=2,3,\ldots)$. As $f_n\in \mathcal{S}^*$, $|a_n|\leq 1/n$ and the quantity $w=zf_n'(z)/f_n(z)=(1+na_nz^{n-1})/(1+a_nz^{n-1})$ maps $\disc$ onto the disk
\[\left|w-\frac{1-n|a_n|^2}{1-|a_n|^2}\right|< \frac{(n-1)|a_n|}{1-|a_n|^2}.\] Since $f_n\in\Scar$, this disk lies inside the domain $\Omega_{car}$. As $1-n|a_n|^2\leq 1-|a_n|^2$,  therefore by Lemma \ref{result1}, it follows that \[\frac{(n-1)|a_n|}{1-|a_n|^2}\leq \frac{1-n|a_n|^2}{1-|a_n|^2}-\frac{1}{2}.\] This gives $|a_n|\leq 1/(2n-1).$ The converse is obviously true by part (i) of the corollary.
\end{proof}

Let us now give an application of Corollary \ref{cor} to partial sums of analytic functions. Recall that if a function $f\in \mathcal{A}$ is given by \eqref{eq}, then the $n$th ($n\geq 1$) partial sum of $f$ is the polynomial $f_n(z)=z+a_2z^2+a_3z^3+\cdots+a_nz^n$. A survey on partial sums can be found in \cite{RAVI1}. The following result in this direction determines sharp radii constants for the second partial sum of a function $f\in \Scar$.

\begin{theorem}\label{th5.2}
Let $f \in \Scar$. Then the second partial sum $f_2$ is starlike in $|z|<1/2$ and convex in $|z|<1/4$. Also, $f_2(\rho z)/\rho\in \Scar$ for $0<\rho\leq 1/3$. All the constants are sharp.
\end{theorem}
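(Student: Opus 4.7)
The plan is to exploit the coefficient bound $|a_2|\le 1$ (noted in the introduction just after \eqref{fcar}) so that $f_2(z)=z+a_2 z^2$ is a quadratic polynomial with leading coefficient of modulus at most one. I would carry out the three assertions in the order stated; the first two share a single M\"obius-map computation, and the third is an immediate appeal to Corollary~\ref{cor}(ii).

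For the radii of starlikeness and convexity, I would first compute
\[\frac{zf_2'(z)}{f_2(z)}=\frac{1+2a_2 z}{1+a_2 z},\qquad 1+\frac{zf_2''(z)}{f_2'(z)}=\frac{1+4a_2 z}{1+2a_2 z},\]
observing that both have the common form $w(u)=(1+2u)/(1+u)$ with $u=a_2 z$ in the first case and $u=2a_2 z$ in the second. A direct calculation gives
\[\RE\frac{1+2u}{1+u}=\frac{1+3\RE u+2|u|^2}{|1+u|^2},\]
and completing the square shows that this is positive precisely off the closed disk $\{|u+3/4|\le 1/4\}$. This disk touches $|u|\le 1/2$ only at the single boundary point $u=-1/2$, so $\RE w(u)>0$ whenever $|u|<1/2$. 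Translating back, $f_2$ is starlike on $|z|<1/(2|a_2|)$ and convex on $|z|<1/(4|a_2|)$; the bound $|a_2|\le 1$ then yields $1/2$ and $1/4$, respectively.

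For the $\Scar$-radius I would apply Corollary~\ref{cor}(ii) directly to $f_2(\rho z)/\rho=z+(\rho a_2)z^2$, which belongs to $\Scar$ if and only if $|\rho a_2|\le 1/(2\cdot 2-1)=1/3$. Combined with $|a_2|\le 1$, this gives the asserted $\rho\le 1/3$.

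Sharpness of all three constants follows uniformly from the single extremal function $f=f_{car}$ of \eqref{fcar}, for which $a_2=1$: the quadratic $(f_{car})_2(z)=z+z^2$ has $zf_2'(z)/f_2(z)=(1+2z)/(1+z)$ vanishing at $z=-1/2$, has $1+zf_2''(z)/f_2'(z)=(1+4z)/(1+2z)$ vanishing at $z=-1/4$, and, by Corollary~\ref{cor}(ii), $z+\rho z^2\in\Scar$ fails for $\rho>1/3$. I do not anticipate any real obstacle: the whole argument reduces to the elementary real-part identity above plus the quoted corollary, and the only point that demands care is lining up the sharpness examples with the attainment $|a_2|=1$ so that none of the three constants can be enlarged.
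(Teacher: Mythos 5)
Your proof is correct and follows essentially the same route as the paper: the bound $|a_2|\le 1$, the M\"obius form of $zf_2'(z)/f_2(z)$, Corollary~\ref{cor}(ii) for the $\Scar$-radius, and sharpness via $z+z^2$. The only cosmetic differences are that you compute $\RE\bigl((1+2u)/(1+u)\bigr)$ exactly instead of using the paper's triangle-inequality estimate, and you treat convexity by a direct computation rather than by the Alexander duality between $f_2$ and $zf_2'$.
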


\begin{proof}
Suppose that $f\in \Scar$ is given by \eqref{eq}. Then $|a_2|\leq1$ and the second partial sum is given by $f_2(z)=z+a_2z^2$. Note that
\[\RE \left(\frac{zf_2'(z)}{f_2(z)}\right)=\RE\left(1+\frac{a_2 z}{1+a_2 z}\right)\geq 1-\frac{|a_2||z|}{1-|a_2||z|}.\]
As a result, $f$ is starlike in $|z|<1/2$. Since $f_2$ is convex in $|z|<r$ if and only if $zf_2'$ is starlike in $|z|<r$, therefore it follows that $f_2$ is convex in $|z|<1/4$. The last part follows by using Corollary \ref{cor}. For sharpness, consider the function $f_{car}$ given by \eqref{fcar}. Its second partial sum is $\tau_2(z)=z+z^2$ which satisfies
\[\left.\frac{z\tau_2'(z)}{\tau_2(z)}\right|_{z=-1/2}=0, \quad \left.1+\frac{z\tau_2''(z)}{\tau_2'(z)}\right|_{z=-1/4}=0 \quad \mbox{and}\quad \left.\frac{z\tau_2'(z)}{\tau_2(z)}\right|_{z=-1/3}=\frac{1}{2}.\qedhere\]
\end{proof}

The next theorem determines the bounds on $\rho$ such that the second partial sum  $f_2(\rho z)/\rho$ belongs to $\Scar$ for a function $f$ belonging to several classes of univalent functions. The same technique can be applied to obtain the results for $f$ belonging to other classes as well.

\begin{theorem}
Let $f\in \mathcal{A}$ be given by \eqref{eq} and $f_2$ denotes its second partial sum. Then we have the following:
\begin{enumerate}
\item If $f\in \mathcal{K}$, then $f_2(\rho z)/\rho\in \Scar$ for $0<\rho\leq 1/3$;
\item If $f\in \mathcal{S}$ (or $\mathcal{S}^*$ or $\mathcal{C}$), then $f_2(\rho z)/\rho\in \Scar$ for $0<\rho\leq 1/6$.
\end{enumerate}
\end{theorem}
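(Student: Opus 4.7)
The plan is to reduce both parts of the theorem to a single application of Corollary~\ref{cor}(ii). Observe that $f_2(\rho z)/\rho = z + a_2 \rho\, z^2$ is a polynomial of the form $z + b z^2$ with $b = a_2 \rho$, and Corollary~\ref{cor}(ii) (taken with $n=2$) says precisely that such a polynomial belongs to $\Scar$ if and only if $|b| \leq 1/3$. Therefore both parts amount to showing $|a_2|\,\rho \leq 1/3$, i.e.\ $\rho \leq 1/(3|a_2|)$, by invoking the sharp coefficient bound on $|a_2|$ available in the class at hand.

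For part~(1), the classical bound for convex functions gives $|a_n|\leq 1$ for every $n\geq 2$, so in particular $|a_2|\leq 1$. The condition $|a_2|\rho \leq 1/3$ is thus implied by $\rho \leq 1/3$, proving (1). For part~(2), I would cite the Bieberbach inequality $|a_2|\leq 2$ for $f\in \mathcal{S}$, together with the well-known fact that the same bound $|a_2|\leq 2$ is inherited by the subclasses $\mathcal{S}^*$ and $\mathcal{C}$ (for the Koebe function the bound is attained). Then $|a_2|\rho \leq 1/3$ whenever $\rho \leq 1/6$, giving (2).

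Sharpness of the constants $1/3$ and $1/6$ would be recorded separately from extremal functions: the convex function $k_c(z) = z/(1-z)$, whose second partial sum is $z+z^2$, shows $1/3$ is best possible in~(1) (one cannot exceed it without violating the $|b|\leq 1/3$ characterization from Corollary~\ref{cor}(ii)); the Koebe function $k(z) = z/(1-z)^2 \in \mathcal{S}^*\subset \mathcal{S}\cap \mathcal{C}$ has $a_2 = 2$, and its second partial sum rescaled by $\rho = 1/6$ yields $z + \tfrac{1}{3}z^2$, which sits on the boundary of $\Scar$ by Corollary~\ref{cor}(ii), so $1/6$ is best possible in~(2).

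There is essentially no technical obstacle here; the content of the theorem is the packaging of Corollary~\ref{cor}(ii) with the standard second-coefficient estimates in $\mathcal{K}$, $\mathcal{S}$, $\mathcal{S}^*$, $\mathcal{C}$. The only mild point worth writing out is that the dilation $f\mapsto f_2(\rho z)/\rho$ normalizes the polynomial back into $\mathcal{A}$, so that Corollary~\ref{cor}(ii) applies directly with coefficient $a_2\rho$ rather than $a_2$; everything else is a one-line inequality.
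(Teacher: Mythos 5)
Your proposal is correct and follows essentially the same route as the paper: both reduce the claim to the condition $3|a_2|\rho\leq 1$ via Corollary~\ref{cor}, then invoke $|a_2|\leq 1$ for $\mathcal{K}$ and $|a_2|\leq 2$ for $\mathcal{S}$ (hence for $\mathcal{S}^*$ and $\mathcal{C}$), with sharpness witnessed by $z/(1-z)$ and the Koebe function. No gaps.
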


\begin{proof}
Since $f_2(\rho z)/\rho=z+a_2\rho z^2$, therefore if $3|a_2|\rho\leq 1$, then $f_2(\rho z)/\rho\in \Scar$ by Corollary \ref{cor}. If $f \in \mathcal{K}$, then $|a_2|\leq 1$ so that $3|a_2|\rho\leq 1$ for $0<\rho\leq 1/3$. The result is sharp for the half-plane mapping $l(z)=z/(1-z)$. Similarly, if $f \in \mathcal{S}$, then $|a_2|\leq 2$ which gives $3|a_2|\rho\leq 1$ for $0<\rho\leq 1/6$ and in this case, Koebe function $k(z)=z/(1-z)^2$ verifies the sharpness of the result.
\end{proof}

\begin{figure}[h]
	\begin{center}
		\subfigure[$2/(1+e^{-z})$]{\includegraphics[width=1.8in]{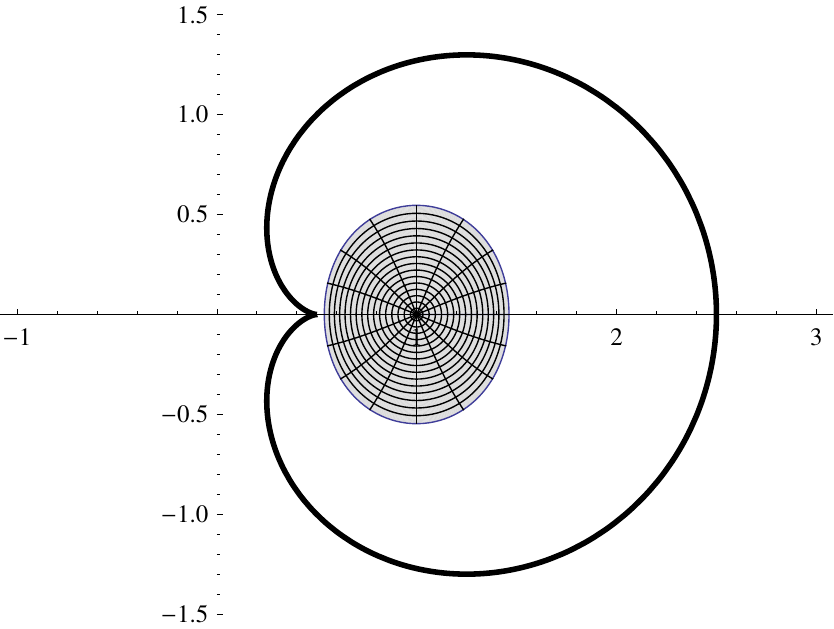}}\hspace{10pt}
		\subfigure[$\cosh z$]{\includegraphics[width=1.8in]{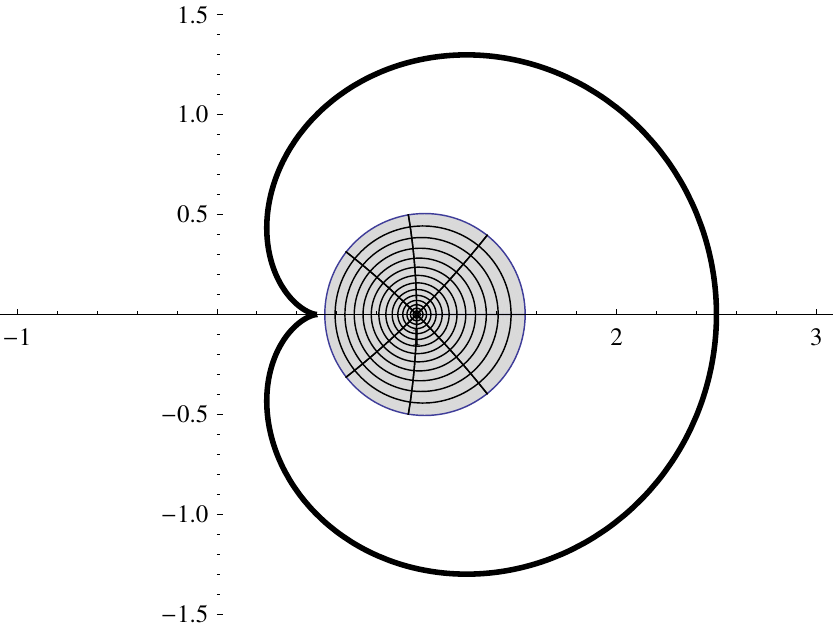}}\hspace{10pt}
        \subfigure[$1+\frac{z}{k}\left(\frac{k+z}{k-z}\right)$]{\includegraphics[width=1.8in]{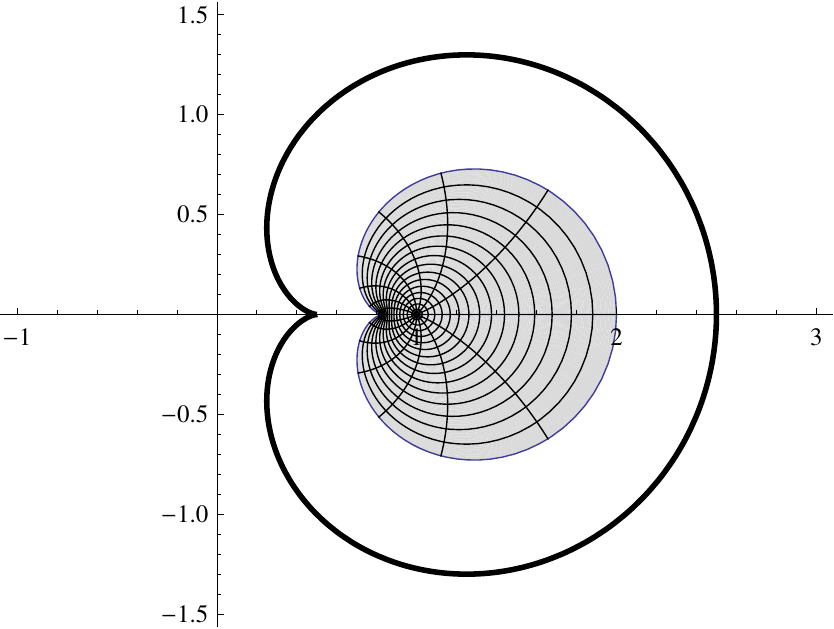}}
		\caption{$\Scar$-radius is unity.}\label{subset}
	\end{center}
\end{figure}

Before closing this section, let us establish the inclusion relation of the class $\Scar$ with the classes $\mathcal{S}^*_{SG}:=\mathcal{S}^*(2/(1+e^{-z}))$, $\mathcal{S}^*_{cosh}:=\mathcal{S}^*(\cosh z)$ and $\mathcal{S}^*_{R}:=\mathcal{S}^*(\psi_R)$ where
\begin{equation}\label{eqr}
\psi_R(z)=1+\frac{z}{k}\left(\frac{k+z}{k-z}\right),\quad k=\sqrt{2}+1.
\end{equation}
These classes were introduced by Goel and Kumar \cite{GOEL},  Alotaibi \emph{et al.} \cite{COSH} and Kumar and Ravichandran \cite{SUSHIL} respectively. Figure \ref{subset} shows that the image domains $|\log(w/(2-w))|<1$, $|\log(w+\sqrt{w^2-1})|<1$ and $\psi_R(\mathbb{D})$ of the functions $2/(1+e^{-z})$, $\cosh z$ and $\psi_R$ respectively under the unit disk lie inside $\Omega_{car}$. As a result $\mathcal{S}^*_{SG}$, $\mathcal{S}^*_{cosh}$ and $\mathcal{S}^*_{R}$ are subclasses of $\Scar$. Consequently the $\Scar$-radius of the classes $\mathcal{S}^*_{SG}$, $\mathcal{S}^*_{cosh}$ and $\mathcal{S}^*_{R}$ is unity. The radii constants associated with other subclasses of starlike functions are investigated in the next section.

\section{$\Scar$-Radius}
Recently, several other subclasses of starlike functions have been defined using the unified class introduced by Ma and Minda\cite{MaMinda} based on some interesting curves in the Euclidean plane. The classes $\mathcal{S}^*_{C}:=\mathcal{S}^*(1+4z/3+2z^2/3)$, $\mathcal{S}^*_{lim}:=\mathcal{S}^*(1+\sqrt{2}z+z^2/2)$, $\mathcal{S}^*_{ne}:=\mathcal{S}^*(1+z-z^3/3)$ and $\mathcal{S}^*_{RL}:= \mathcal{S}^*(\sqrt{2}-(\sqrt{2}-1)((1-z)/(1+2(\sqrt{2}-1)z))^{1/2})$ are the classes of starlike functions associated with cardioid $(9u^2+9v^2-18u+5)^2-16(9u^2+9v^2-6u+1)=0$, limacon $(4u^2+4v^2-8u-5)^2+8(4u^2+4v^2-12u-3)=0$, nephroid $((u-1)^2+v^2-4/9)^3-4v^2/3=0$ and left-half of the shifted lemniscate of Bernoulli $|(w-\sqrt{2})^2-1|=1$ respectively. Other classes associated with univalent functions include $\mathcal{S}^*_{\leftmoon}:=\mathcal{S}^*(z+\sqrt{1+z^2})$ and $\mathcal{S}^*_{sin}:=\mathcal{S}^*(1+\sin z)$. These classes have been introduced and studied in \cite{GANDHI, WANI, MEND, KANSH, CHO1, RAINA, PSHARMA, YUNUS, WANI2}. In this section, we will determine $\Scar$-radius for all the classes discussed here and in the previous section. Firstly, let us determine the $\Scar$-radius of the class $ \mathcal{S}^*[A,B]$, where $-1\leq B<A\leq 1$.

\begin{theorem}\label{result3}
Let $-1\leq B<A\leq 1$. If $B\geq 0$, then the $\mathcal{S}^*_{car}$-radius for the class $ \mathcal{S}^*[A,B]$ is $R_2=\min\{1,1/(2A-B)\}$. If $B<0$, then the $\mathcal{S}^*_{car}$-radius for the class $\mathcal{S}^*[A,B]$ is $R_2$ if $R_2\leq R_1$ and $R_3$ if $R_2>R_1$, where $R_1=1/\sqrt{B(3B-2A)}$ and $R_3=\min\{1,3/(2A-5B)\}$.
\end{theorem}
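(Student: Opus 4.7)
My starting point is the same fact used in the proof of Theorem~\ref{th4.1}(vii): for $f\in\mathcal{S}^*[A,B]$ and $|z|\le r<1$, the quantity $zf'(z)/f(z)$ lies in the closed disk
\[
\left|w-a(r)\right|\le \rho(r),\qquad a(r):=\frac{1-ABr^2}{1-B^2r^2},\quad \rho(r):=\frac{(A-B)r}{1-B^2r^2}.
\]
By Lemma~\ref{result7}, this disk is contained in $\Omega_{car}$ iff either $a(r)\le 3/2$ with $\rho(r)\le a(r)-1/2$, or $a(r)\ge 3/2$ with $\rho(r)\le 5/2-a(r)$. The first key simplification I would exploit is the elementary identity
\[
a(r)\mp \rho(r)=\frac{1\mp Ar}{1\mp Br},
\]
which turns the two conditions into the linear-in-$r$ inequalities $(1-Ar)/(1-Br)\ge 1/2$ and $(1+Ar)/(1+Br)\le 5/2$ respectively.

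Next I would analyse the monotonicity of $a(r)$. Differentiating gives $\partial a/\partial(r^2)=B(B-A)/(1-B^2r^2)^2$, which has the sign of $B$ since $B<A$. Hence $a(r)$ is non-increasing on $[0,1)$ when $B\ge 0$ and strictly increasing when $B<0$. In the case $B\ge 0$, $a(r)\le a(0)=1\le 3/2$ throughout, so only the first regime of Lemma~\ref{result7} is active; the condition $(1-Ar)/(1-Br)\ge 1/2$ collapses to $r\le 1/(2A-B)$, and intersection with $r\le 1$ yields the stated $R_2$. In the case $B<0$, $a(r)$ crosses $3/2$ at exactly $r=R_1=1/\sqrt{B(3B-2A)}$ (noting that $B(3B-2A)>0$ because $B<0$ forces $3B/2<B<A$). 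For $r\in[0,R_1]$ the first regime again gives $r\le R_2$, and for $r\in[R_1,1]$ the second regime gives $r\le R_3$ after solving $(1+Ar)/(1+Br)\le 5/2$. Consequently, if $R_2\le R_1$ the first-regime bound bites before the transition and the radius is $R_2$; otherwise the binding constraint moves to the second regime and the radius is $R_3$.

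Sharpness in every case would be supplied by the Ma--Minda extremal $f_{A,B}(z)=z(1+Bz)^{(A-B)/B}$ (with the natural modification $ze^{Az}$ when $B=0$), for which $zf_{A,B}'(z)/f_{A,B}(z)=(1+Az)/(1+Bz)$. Substituting $z=-R_2$ gives $(1-AR_2)/(1-BR_2)=1/2$, the leftmost point of $\partial\Omega_{car}$, while substituting $z=R_3$ gives $(1+AR_3)/(1+BR_3)=5/2$, its rightmost point. The step I expect to require the most care is the combinatorial bookkeeping in the $B<0$ regime: one must confirm that in the sub-case $R_2>R_1$ the second-regime bound $R_3$ genuinely exceeds $R_1$ (so the answer is $R_3$ and not the smaller $R_1$), and that $a(r)$ stays inside the interval $(1/2,5/2)$ on which Lemma~\ref{result7} is formulated. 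Both reductions ultimately boil down to the strict inequality $B<A$ combined with the sign of $B$, and are conceptually routine but deserve explicit verification.
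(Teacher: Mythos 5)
Your proposal is correct and follows essentially the same route as the paper: the Janowski disk estimate from \cite[Lemma 2.1]{RAVI}, containment in $\Omega_{car}$ via Lemma~\ref{result7} with the case split at $a(r)=3/2$ (equivalently $r=R_1$), and sharpness via the same extremal function $z(1+Bz)^{(A-B)/B}$. Your identity $a(r)\mp\rho(r)=(1\mp Ar)/(1\mp Br)$ and the monotonicity computation for $a(r)$ are tidy refinements of the paper's more direct algebra, and your flagged check that $R_3\geq R_1$ when $R_2>R_1$ follows at once from evaluating both regime conditions at $r=R_1$, where they coincide.
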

\begin{proof}
Let $f\in\mathcal{S}^*[A,B]$. For $|z|=r$, we have
	\begin{align}\label{eqn1}
	\left|\frac{zf'(z)}{f(z)}-\frac{1-ABr^2}{1-B^2r^2}\right|\leq \frac{(A-B)r}{1-B^2r^2}
	\end{align}
by \cite[Lemma 2.1, p.\ 267]{RAVI}. Set $a(r):=(1-ABr^2)/(1-B^2r^2)$ and $R(r):=(A-B)r/(1-B^2r^2).$ If $B\geq0$, then $a(r)\leq 1$ so that $f$ lies in the domain $\Omega_{car}$ provided $R(r)\leq a(r)-1/2$ by using \lemref{result7}. The last inequality is valid for $r\leq R_2$.

Let us now suppose that $B<0$. Then $3B-2A<2B-2A<0$ so that $B(B-2A)>0$. Similarly, $2A-B>0$ and $2A-5B>0$. Thus $R_1$, $R_2$ and $R_3$ are well-defined real numbers. Also, in this case, $a(r)>1$. If $a(r)\leq 3/2$, then $r\leq R_1$. By \lemref{result7}, it follows that the disk $|w-a(r)|<R(r)$ lies inside $\Omega_{car}$ provided $R(r)\leq a(r)-1/2$. This gives $r\leq R_2$. Therefore if $R_2\leq R_1$, then both the conditions $a(r)\leq 3/2$ and $R(r)\leq a(r)-1/2$ are satisfied simultaneously for $r\leq R_2$.

Assume that $R_2> R_1$. Note that $r\geq R_1$ if and only if $a(r)\geq 3/2.$ In particular, if $r\geq R_2$, then $a(r)\geq 3/2$. Again by applying \lemref{result7}, the disk $|w-a(r)|<R(r)$ lies inside $\Omega_{car}$ if $R(r)\leq 5/2-a(r)$, which yields $r\leq R_3$.

The result is sharp for the function $f_0$ given by
	\[f_0(z)=\begin{cases}
	z(1+Bz)^{\frac{A-B}{B}},& B\neq 0\\
	z \exp(Az),& B=0
	\end{cases}\]
which satisfies $z f'(z)/f(z)=(1+Az)/(1+Bz)$. Note that
\[\left.\frac{zf'(z)}{f(z)}\right|_{z=-R_2}=\frac{1}{2}\quad \mbox{and}\quad \left.\frac{zf'(z)}{f(z)}\right|_{z=R_3}=\frac{5}{2}.\qedhere\]
\end{proof}

Since the class $ \mathcal{S}^*[A,B]$ contains well-known classes of starlike functions for several choices of $A$ and $B$, therefore Theorem \ref{result3} leads to the following corollary.

\begin{corollary}\label{cor5.3}
(i) For $0\leq \alpha<1$, the $\Scar$-radius for the class $\mathcal{S}^*(\alpha)$ is $1/(3-4\alpha)$ if $0\leq\alpha\leq 1/4$ and $3/(7-4\alpha)$ if $1/4< \alpha<1$. (ii) For $0\leq \alpha<1$, the $\Scar$-radius for the class $\mathcal{S}^*[1-\alpha,0]$ is $1/(2(1-\alpha))$ if $0<\alpha< 1/2$ and $1$ if $1/2\leq \alpha<1$. (iii) For $0<\alpha\leq 1$, the $\Scar$-radius for the class $\mathcal{S}^*[\alpha,-\alpha]$ is $1$ if $0<\alpha\leq 1/3$ and $1/(3\alpha)$ if $1/3<\alpha\leq 1$. (iv) For $M>1/2$, the $\Scar$-radius for the class $\mathcal{S}^*[1,-(M-1)/M]$ is $M/(3M-1)$.
\end{corollary}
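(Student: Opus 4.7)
The plan is to deduce each of the four parts directly from Theorem \ref{result3} by specializing to the Janowski parameters $(A,B)=(1-2\alpha,-1)$ for (i), $(1-\alpha,0)$ for (ii), $(\alpha,-\alpha)$ for (iii), and $(1,-(M-1)/M)$ for (iv), since in each case $\mathcal{S}^*[A,B]$ coincides with the class under consideration. The first step in every part is to check the sign of $B$, since Theorem \ref{result3} splits on whether $B\geq 0$ or $B<0$.

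Part (ii) is the only case with $B\geq 0$, and it is immediate: the theorem gives $R_2=\min\{1,1/(2(1-\alpha))\}$, and the two pieces in the statement come from whether $2(1-\alpha)\geq 1$ or not. For part (iii) I would compute $R_1=1/(\alpha\sqrt{5})$ and $R_2=\min\{1,1/(3\alpha)\}$ and observe that the comparison $R_2\leq R_1$ reduces to the parameter-free inequality $\sqrt{5}\leq 3$, so $R_2$ is always the relevant quantity and the two-piece formula merely records whether the minimum is attained by $1$ or by $1/(3\alpha)$ (threshold $\alpha=1/3$). For part (iv), I would split into the subcases $1/2<M<1$, $M=1$, $M>1$ according to the sign of $B$; in each subcase a short simplification produces $R_2=M/(3M-1)$, and in the $M>1$ subcase the remaining verification $R_2\leq R_1$ reduces (after squaring and rearranging) to $2(2M-1)(M+1)\geq 0$, which holds on $M>1/2$. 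Hence $M/(3M-1)$ is the uniform answer.

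The only part that requires more than bookkeeping is (i). Here one computes $R_1=1/\sqrt{5-4\alpha}$, $R_2=\min\{1,1/(3-4\alpha)\}$, $R_3=\min\{1,3/(7-4\alpha)\}$, and the critical inequality $R_2\leq R_1$ rearranges, after squaring, to $(4\alpha-1)(\alpha-1)\geq 0$, which on $[0,1)$ flips sign exactly at $\alpha=1/4$. This explains why the answer switches from $R_2=1/(3-4\alpha)$ on $[0,1/4]$ (where also $3-4\alpha\geq 1$, so the minimum with $1$ is harmless) to $R_3=3/(7-4\alpha)$ on $(1/4,1)$ (where $7-4\alpha>3$ makes the minimum with $1$ again harmless). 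Locating this transition point $\alpha=1/4$ is the one nontrivial moment in the proof; everything else is substitution into Theorem \ref{result3}.
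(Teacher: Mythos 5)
Your proof is correct and follows essentially the same route as the paper: each part is obtained by substituting the appropriate Janowski parameters into Theorem~\ref{result3} and deciding which of $R_1$, $R_2$, $R_3$ governs (the paper handles the unity cases in (ii) and (iii) by citing Corollary~\ref{cor4.2}, but this is the same content as your $\min\{1,\cdot\}$ bookkeeping). You are in fact somewhat more explicit than the paper about the comparisons $R_2\le R_1$, e.g.\ the factorizations $(4\alpha-1)(\alpha-1)\ge 0$ and $2(2M-1)(M+1)\ge 0$, which the paper simply asserts.
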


\begin{proof}
For (i), by taking $A=(1-2\alpha)$ and $B=-1$ in Theorem \ref{result3}, we have $R_1=1/\sqrt{5-4\alpha}$, $R_2=1/(3-4\alpha)$ and $R_3=3/(7-4\alpha)$. The result now follows by applying the case $B<0$. By Corollary \ref{cor4.2}, the $\Scar$-radius for the classes $\mathcal{S}^*[1-\alpha,0]$, $\alpha\in[1/2,1)$ and $\mathcal{S}^*[\alpha,-\alpha]$, $\alpha\in (0,1/3]$ is unity. The case $B=0$ is applicable in part (ii), while in (iii), $1/(3\alpha)=R_2<R_1=1/(\sqrt{5}\alpha)$.

For the last part (iv), if we take $A=1$ and $B=-(M-1)/M$, then it is easy to see that $B\geq 0$ if and only if $M\leq 1$. Therefore if $M\leq 1$ then the $\Scar$-radius is $R_2=M/(3M-1)$. If $M>1$, then $R_1=M/\sqrt{(M-1)(5M-3)}$ and $R_2\leq R_1$, therefore $\Scar$-radius is $R_2$ in this case as well.
\end{proof}

By Corollary \ref{cor5.3}, it is evident that the $\Scar$-radius for the class $\mathcal{S}^*$ is $1/3$ and sharpness holds for the Koebe function $k(z)=z/(1-z)^2.$ In particular, the function $z/(1-Az)^2 \in \Scar$ if and only if $|A|\leq 1/3$. By classical Mark-Strohh\"acker Theorem \cite[Theorem 2.6a, 57]{MILLER}, $\mathcal{K}\subset\mathcal{S}^*(1/2).$ By Corollary \ref{cor5.3}, the $\Scar$-radius for the class $\mathcal{K}$ is at least $3/5.$ In fact, this radius is best possible. To see this, consider the function $l(z)=z/(1-z)\in \mathcal{K}$. This function satisfies $z_0 l'(z_0)/l(z_0)=5/2$ at $z_0=3/5$. Therefore the $\Scar$-radius for the class $\mathcal{K}$ is $3/5$. Now, we will determine the $\Scar$-radius for other subclasses of starlike functions defined earlier in this paper.

\begin{theorem} \label{th5.3}
The $\Scar$-radius for various subclasses of starlike functions is given as follows:
\begin{center}
\begin{tabular}{QQA}
\toprule
\mbox{S.No.} & \mbox{Class} & \mbox{Radius}\\\midrule
(a)&\mathcal{S}^*(\sqrt{1+cz}) & r_1=\displaystyle\frac{3}{4c}\quad \left(\displaystyle\frac{3}{4}<c\leq 1\right) \\\midrule
(b)&\mathcal{S}^*_L(\alpha) & r_2=\displaystyle\frac{3-4\alpha}{4(1-\alpha)^2}\quad \left(0\leq \alpha< \frac{1}{2}\right) \\\midrule
(c)&\mathcal{S}^*_e(\alpha) & r_3=\log\left(\displaystyle\frac{2(1-\alpha)}{1-2\alpha}\right)\quad \left(0\leq \alpha<\displaystyle\frac{e-2}{2(e-1)}\right) \\\midrule
(d)&\mathcal{S}^*_{RL} & r_4=\displaystyle\frac{1}{82}(39+17\sqrt{2})\approx 0.7688 \\\midrule
(e)&\mathcal{S}^*_C & r_5=\displaystyle\frac{1}{2}\approx 0.5 \\\midrule
(f)&\mathcal{S}^*_{lim} & r_6=\sqrt{2}-1\approx 0.414 \\\midrule
(g)&\mathcal{S}^*_{\leftmoon} & r_7=\displaystyle\frac{3}{4}\approx 0.75 \\\midrule
(h)&\mathcal{S}^*_{sin} & r_8=\sin^{-1}\left(\displaystyle\frac{1}{2}\right)\approx 0.523598 \\\midrule
(i)&\mathcal{S}^*_{ne} & r_{9}\approx 0.557875  \\\bottomrule
\end{tabular}
\end{center}
Here $r_9$ is the smallest positive real root of the equation $2r^3-6r+3=0$ in $(0,1)$. All bounds are sharp.
\end{theorem}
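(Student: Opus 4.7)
My strategy is uniform across all nine parts. For $f$ in the subclass $\mathcal{S}^*(\varphi)$ under consideration, the subordination $zf'(z)/f(z)\prec\varphi(z)$ places $zf'(z)/f(z)$ inside the image region $\varphi(\overline{\mathbb{D}_r})$ on $|z|=r$. Since each $\varphi$ here has real Taylor coefficients, the image is symmetric about $\mathbb{R}$, so I would enclose it in a disk $|w-a(r)|\le\rho(r)$ with real center $a(r)$ and invoke Lemma~\ref{result7} to isolate the largest $r$ for which this disk sits inside $\Omega_{car}$. In every one of the nine cases the controlling inequality turns out to be the left-side constraint $\rho(r)\le a(r)-1/2$; the companion right-side constraint $\rho(r)\le 5/2-a(r)$ and the curvature constraint of Case~2 of Lemma~\ref{result7} must be verified \emph{a posteriori} and will be slack at the critical radius since $\max_{|z|=r}\RE\varphi(z)\le 5/2$ there.

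Cases (a) and (b) admit a clean rescaling argument: $f\in\mathcal{S}^*(\sqrt{1+cz})$ implies $f(rz)/r\in\mathcal{S}^*(\sqrt{1+crz})$, so Theorem~\ref{th4.1}(vi) gives the $\Scar$-radius as the largest $r$ with $cr\le 3/4$, namely $r_1=3/(4c)$; the same template, together with the Khatter et al.\ Lemma~2.1 from \cite{KHATTER} already used in Theorem~\ref{th4.1}(iv)--(v), handles (b) and (c), reducing them to the equations $\alpha+(1-\alpha)\sqrt{1-r}=1/2$ and $\alpha+(1-\alpha)e^{-r}=1/2$, respectively. For parts (e)--(i), each $\varphi$ is a polynomial (or simple algebraic/trigonometric expression) with real coefficients, and I would compute $\min_{|z|=r}\RE\varphi(z)$ directly as in the proof of Lemma~\ref{result1}: parametrize $\RE\varphi(re^{it})=G(\cos t)$, locate the critical points of $G$, and equate the minimum to $1/2$. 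For case (i) the minimum is attained at $t=\pi$ and equals $1-r+r^3/3$, so the radius is the smallest root in $(0,1)$ of $2r^3-6r+3=0$. Cases (g) and (h) reduce similarly to $-r+\sqrt{1-r^2}=1/2$ and $1-\sin r=1/2$, yielding $r_7=3/4$ and $r_8=\sin^{-1}(1/2)$; the minimum-location analysis for $\RE(1+\sin z)$ requires a short computation at $t=\pi$ but is routine.

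The principal obstacle is case (d), $\mathcal{S}^*_{RL}$, whose $\varphi$ is neither polynomial nor of Khatter type. Here I would use the description of the image as the left half of the shifted lemniscate $|(w-\sqrt{2})^2-1|\le 1$ (already noted in the paper) and solve $\varphi(-r)=1/2$ explicitly: writing $a=\sqrt{2}-1$, the equation $\sqrt{2}-a\sqrt{(1+r)/(1-2ar)}=1/2$ can be squared and linearized to give $r_4=(39+17\sqrt{2})/82$ after a short but careful rationalization by the conjugate of $6+10\sqrt{2}$. Sharpness in all nine cases is witnessed by the Ma--Minda extremal function $f_0$ with $zf_0'(z)/f_0(z)=\varphi(z)$: at $z=-r$ one has $\varphi(-r)=1/2\in\partial\Omega_{car}$, so no radius beyond the computed one is admissible.
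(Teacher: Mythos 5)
Your handling of parts (a)--(d) is essentially the paper's argument and is sound: in each case $zf'(z)/f(z)$ lies in a disk centred at $1$ of radius $1-\varphi(-r)$, and Lemma~\ref{result7} with $a=1$ reduces everything to $\varphi(-r)\ge 1/2$. The rescaling observation $f\in\mathcal{S}^*(\sqrt{1+cz})\Rightarrow f(rz)/r\in\mathcal{S}^*(\sqrt{1+crz})$ is a clean alternative route to (a), your equations for (b) and (c) are equivalent to the paper's, and the rationalization in (d) does yield $(39+17\sqrt2)/82$. The sharpness argument via $\varphi(-r)=1/2\in\partial\Omega_{car}$ is also the paper's.

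The gap is in parts (e)--(i), where neither of your two mechanisms proves the required inclusion. First, the disk-enclosure route through Lemma~\ref{result7} cannot reach the stated sharp radii there: for $\mathcal{S}^*_C$ the tight estimate $|\psi_C(z)-1|\le 4r/3+2r^2/3\le 1/2$ gives only $r\le(\sqrt7-2)/2\approx0.323$, well short of $r_5=1/2$, and in fact the point $\psi_C(e^{2.5i}/2)\approx 0.513+0.239i$ lies in $\Omega_{car}$ but outside \emph{every} admissible disk of Lemma~\ref{result7} that touches the cusp at $1/2$ --- so no choice of enclosing disk recovers $r_5$. Second, your fallback of equating $\min_{|z|=r}\RE\varphi(z)$ to $1/2$ only places $\varphi(\mathbb{D}_r)$ in the half-plane $\RE w\ge 1/2$, which is neither contained in nor contains the bounded cuspidal region $\Omega_{car}$ (its real-axis cross-section is $(1/2,5/2)$ while its boundary dips to $\RE w=1/4$ off the axis); a point such as $1/2+2i$ satisfies $\RE w\ge1/2$ yet is far outside the cardioid. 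Thus for (e)--(i) you have established only the necessary condition $\psi(-r)\ge 1/2$. The paper closes exactly this gap by separately verifying the inclusion $\psi(\mathbb{D}_{r})\subset\Omega_{car}$ at the critical radius (Figure~\ref{radius}); any complete proof must supply that step, e.g.\ by checking the whole boundary curve $\psi(re^{it})$ against the defining inequality of the cardioid rather than just its real-axis point. (Two minor points: Lemma~\ref{result7} carries no ``curvature constraint'' beyond $\rho\le a-1/2$ for $a\le 3/2$ and $\rho\le 5/2-a$ for $a\ge 3/2$; and in (g) the relevant equation is $-r+\sqrt{1+r^2}=1/2$, not $-r+\sqrt{1-r^2}=1/2$.)
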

\begin{proof}
Let $|z|=r$ and $\phi_{car}(\mathbb{D})=\Omega_{car}$ where $\phi_{car}$ is given by \eqref{eq1.1}.

(a) If $c\in (0,3/4]$, then the $\Scar$-radius of the class $\mathcal{S}^*(\sqrt{1+cz})$ is $1$ by Theorem \ref{th4.1}(vi). For $c\in (3/4,1]$, if $f \in \mathcal{S}^*(\sqrt{1+cz})$, then
\[\left|\frac{zf'(z)}{f(z)}-1\right|\leq 1-\sqrt{1-cr}.\]
Using \lemref{result7}, $ 1-\sqrt{1-cr}\leq 1/2$ which gives $r\leq 3/(4c):=r_1.$ This bound is sharp for the function $f_1$ with $zf'_1(z)/f_1(z)=\sqrt{1+cz}$ and $zf_1'/f_1$ assumes the value $1/2$ at $z=-r_1$.

(b) By Theorem \ref{th4.1}(v), the $\Scar$-radius of the class $\mathcal{S}^*_L(\alpha)$ is $1$ if $\alpha\in [1/2,1)$. Let $f\in\mathcal{S}^*_L(\alpha)$ for $0\leq \alpha< 1/2.$ Then $zf'(z)/f(z)\prec \alpha+(1-\alpha)\sqrt{1+z}$ and
\[\left|\frac{zf'(z)}{f(z)}-1\right| \leq (1-\alpha)(1-\sqrt{1-r}).\]
In view of \lemref{result7}, it is easy to deduce that $(1-\alpha)\left(1-\sqrt{1-r}\right)\leq 1/2$ or equivalently $r\leq (3-4\alpha)/(4(1-\alpha)^2):=r_2.$ This bound is best possible for the function $f_2$ with $zf'_2(z)/f_2(z)=\alpha+(1-\alpha)\sqrt{1+z}$ as $zf'_2/f_2$ takes the value $1/2$ at $z=-r_2$.

(c) For $\alpha \in [(e-2)/(2(e-1)),1)$, the $\Scar$-radius of the class $\mathcal{S}^*_e(\alpha)$ is $1$ by Theorem \ref{th4.1}(iv). If $0\leq \alpha< (e-2)/(2(e-1))$ and $f\in\mathcal{S}^*_e(\alpha)$, then $zf'(z)/f(z)\prec\alpha+(1-\alpha)e^{z}$ which gives
\[\left|\frac{zf'(z)}{f(z)}-1\right|\leq(1-\alpha)(1-e^{-r})\]
so that $(1-\alpha)(1-e^{-r})\leq 1/2$ by Lemma \ref{result7}. Thus $r\leq\log (2(1-\alpha)/(1-2\alpha)):=r_3$. For sharpness, consider the function $f_3$ with $zf_3'(z)/f_3(z)=\alpha+(1-\alpha)e^{z}$. Note that the quantity $zf_3'/f_3$ reduces to $1/2$ at the point $z=-r_3$.

(d) Let $f\in\mathcal{S}^*_{RL}.$ Then
\[\frac{zf'(z)}{f(z)}\prec\sqrt{2}-(\sqrt{2}-1)\sqrt{\frac{1-z}{1+2(\sqrt{2}-1)z}}.\]
A simple computation shows that
\[\left|\frac{zf'(z)}{f(z)}-1\right|\leq 1-\left(\sqrt{2}-(\sqrt{2}-1)\sqrt{\frac{1+r}{1-2(\sqrt{2}-1)r}}\right).\]
By applying \lemref{result7}, we must have
\[1-\left(\sqrt{2}-(\sqrt{2}-1)\sqrt{\frac{1+r}{1-2(\sqrt{2}-1)r}}\right)\leq \frac{1}{2}\]
which yields $r\leq (39+17\sqrt{2})/82:=r_4.$ The result is sharp for the function $f_4$ defined as
\[\frac{zf_4'(z)}{f_4(z)}=\sqrt{2}-(\sqrt{2}-1)\sqrt{\frac{1-z}{1+2(\sqrt{2}-1)z}}\]
which satisfies
\[\left.\frac{zf_4'(z)}{f_4(z)}\right|_{z=-r_4}=\sqrt{2}-\frac{1}{2}(\sqrt{2}-1)\sqrt{11+6\sqrt{2}}.\]
As $(3+\sqrt{2})^2=11+6\sqrt{2}$, therefore it is easy to verify that $zf_4'(z)/f_4(z)$ equals $1/2$ at $z=-r_4$.

\begin{figure}[h]
	\begin{center}
		\subfigure[$r_5$]{\includegraphics[width=1.5in]{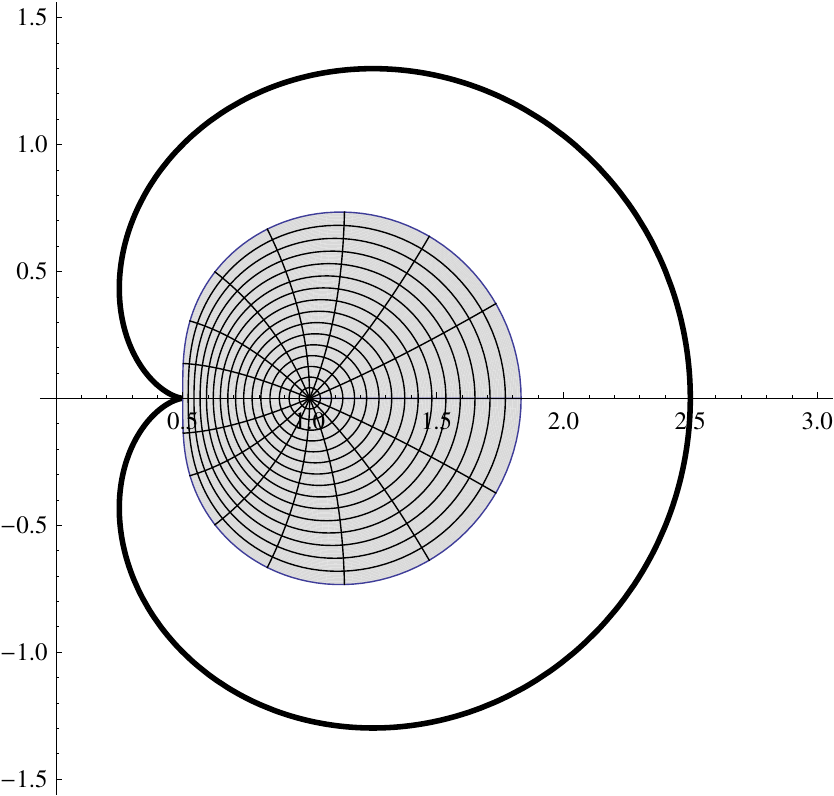}}\hspace{10pt}
		\subfigure[$r_6$]{\includegraphics[width=1.5in]{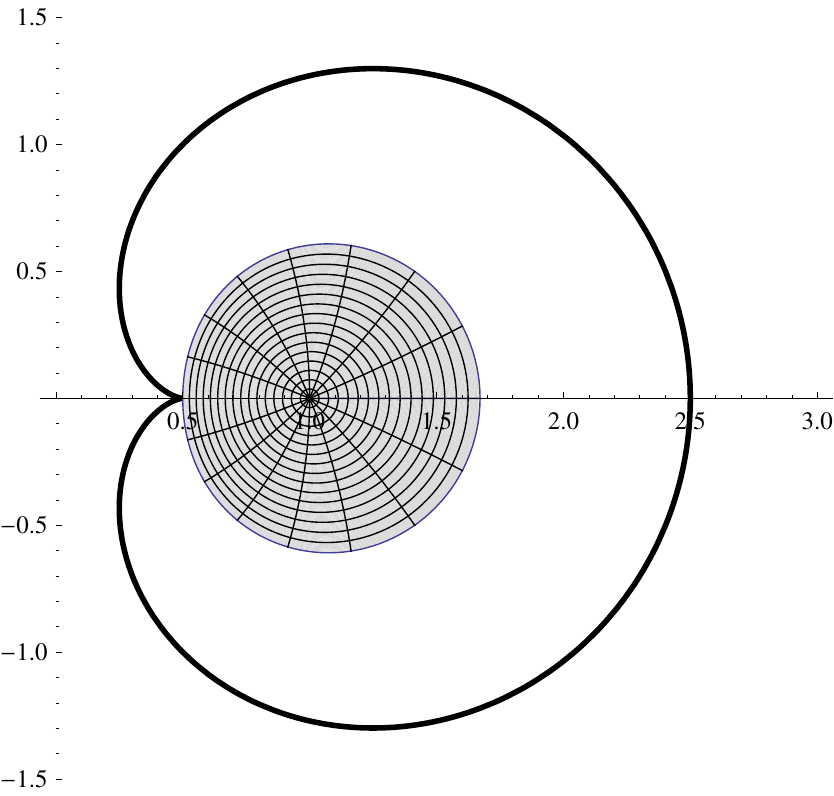}}\hspace{10pt}
\subfigure[$r_7$]{\includegraphics[width=1.5in]{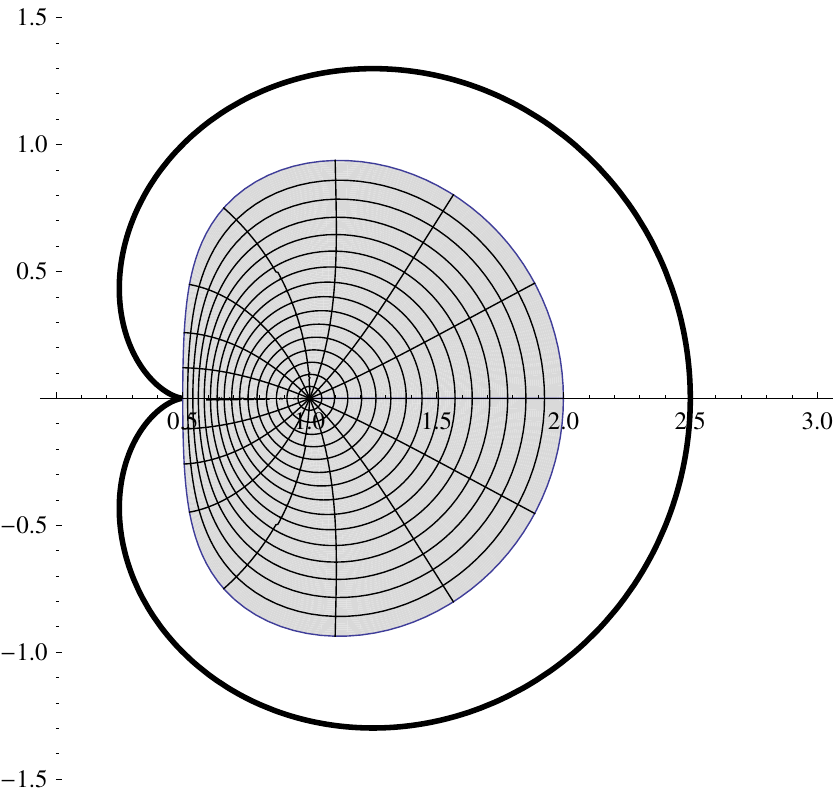}}\hspace{10pt}
\subfigure[$r_8$]{\includegraphics[width=1.5in]{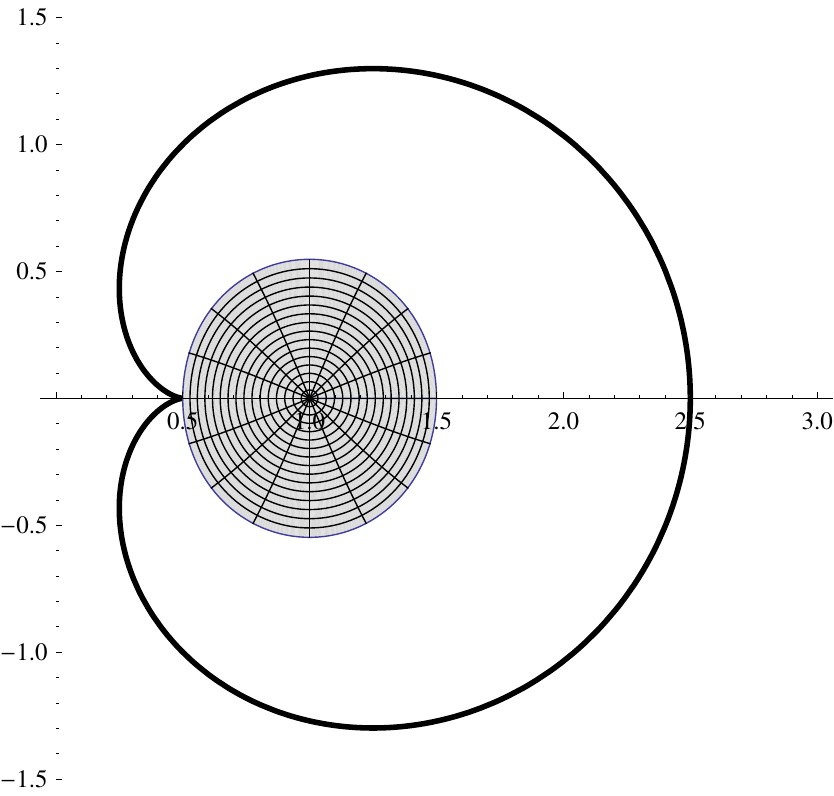}}\hspace{10pt}
\subfigure[$r_{9}$]{\includegraphics[width=1.5in]{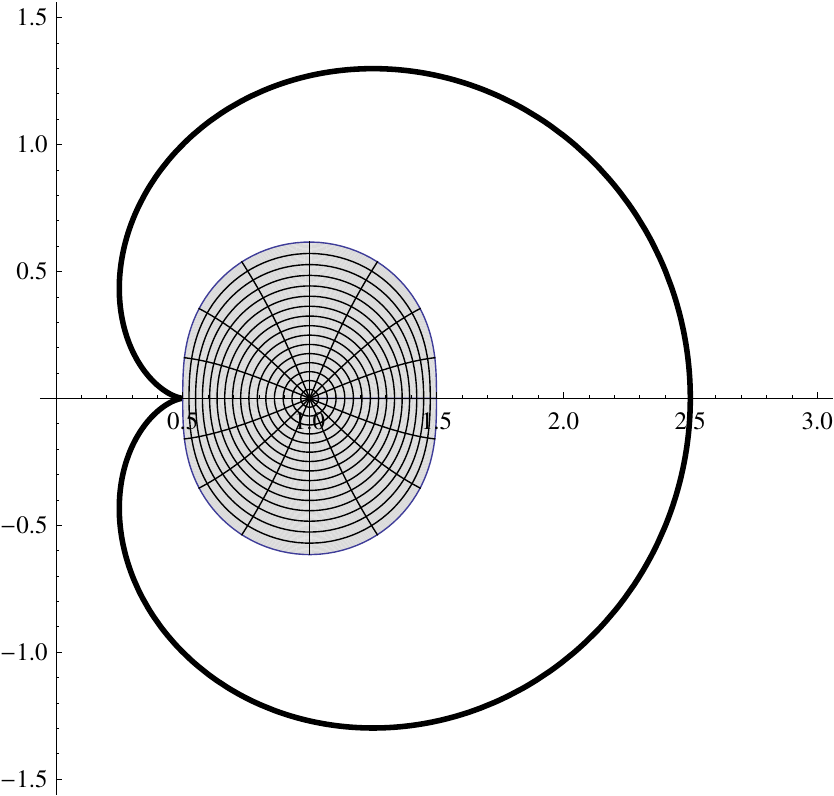}}\hspace{10pt}
		\caption{Image Domains lying inside $\Omega_{car}$.}\label{radius}
	\end{center}
\end{figure}

(e) If $f\in\mathcal{S}^*_C$, then $zf'(z)/f(z)\prec 1+4z/3+2z^2/3:=\psi_C(z)$. We need to find the value of $r$ such that $zf'(z)/f(z)\prec \phi_{car}(z)$ in $\mathbb{D}_r$. It suffices to determine the value of $r$ that satisfies $\psi_C(z)\prec \phi_{car}(z)$ in $\mathbb{D}_r$. For this subordination to happen, it is necessary that
\[\frac{1}{2}=\phi_{car}(-1)\leq \phi_{car}(-r)\leq \psi_C(-r)=1-\frac{4r}{3}+\frac{2r^2}{3}.\]
This gives $r\leq 1/2:=r_5$. In fact, the image of the disk $\mathbb{D}_{r_5}$ under the function $\psi_C$ lies inside the domain $\Omega_{car}$ (see Figure \ref{radius}(a)). Thus the $\Scar$-radius of the class $\mathcal{S}^*_C$ is at least $r_5$. In fact, this bound cannot be further improved as seen by the function $f_5(z)=z \exp((4z+z^2)/3)$. The function $f_5$ satisfies $zf_5'(z)/f_5(z)=1/2$ at $z=-r_5$.

(f) If $f\in\mathcal{S}^*_{lim},$ then $zf'(z)/f(z)\prec 1+\sqrt{2}z+z^2/2:=\psi_{lim}(z).$ A necessary condition for the subordination  $\varphi_{lim}\prec\phi_{car}$ to hold in $\disc_r$ is given by
\[\frac{1}{2}=\phi_{car}(-1)\leq \phi_{car}(-r)\leq\psi_{lim}(-r)=1-\sqrt{2}r+\frac{r^2}{2}\]
which yields $r\leq \sqrt{2}-1:=r_6.$ Figure \ref{radius}(b) shows that $\psi_{lim}(\mathbb{D}_{r_6})\subset \Omega_{car}$. The bound $r_6$ is sharp and is attained by the function $f_6(z)=z \exp(\sqrt{2}z+z^2/4)$. Note that $zf_6'(z)/f_6(z)=1/2$ at $z=-r_6$.

(g) For a function $f\in\mathcal{S}^*_{\leftmoon}$, $zf'(z)/f(z)\prec z+\sqrt{z^2+1}:=\psi_{\leftmoon}.$ In order to compute the $\Scar$-radius for the class $\mathcal{S}^*_{\leftmoon},$ we shall find the value of $r$ such that the subordination $\psi_{\leftmoon}\prec\phi_{car}$ holds in $\disc_r.$ For this to hold, it is necessary that
	\[\frac{1}{2}\leq \phi_{car}(-1)\leq \phi_{car}(-r)\leq \psi_{\leftmoon}(-r)=-r+\sqrt{1+r^2},\]
	which reduces to $r\leq 3/4:=r_7.$ The image domain $\psi_{\leftmoon}(\mathbb{D}_{r_7})$ is contained in $\Omega_{car}$ (see Figure \ref{radius}(c)). This shows that $\Scar$-radius for the class $\mathcal{S}^*_{\leftmoon}$ is at least $r_7$. If we consider the function $f_7$ defined as $zf_7'(z)/f_7(z)=\psi_{\leftmoon}(z)$, then $\psi_{\leftmoon}(-r_7)=1/2$ and this shows that the bound $r_7$ is sharp.

(h) If $f\in\mathcal{S}^*_{sin},$ then $zf'(z)/f(z)\prec 1+\sin z:=\psi_s(z). $ Proceeding in a similar manner, the inequalities
	\[\frac{1}{2}\leq \phi_{car}(-1)\leq \phi_{car}(-r)\leq \psi_s(-r)=1-\sin r\]
give $r\leq \sin^{-1}(1/2):=r_8.$ Also, Figure \ref{radius}(d) depicts that $\Omega_{car}$ contains the image domain $\psi_s(\mathbb{D}_{r_8})$. Thus the $\Scar$-radius for the class $\mathcal{S}^*_{sin}$ is at least $r_8$. For the function
	 \[f_8(z)=z \exp\left(\int_{0}^{z}\frac{\sin t}{t}dt\right)=z+z^2+\frac{z^3}{2}+\frac{z^4}{9}+\cdots,\]
it is easily seen that $zf_8'(z)/f_8(z)=1/2$ at $z=-r_8$.

(i) Let $f\in\mathcal{S}^*_{ne}.$ Then $zf'(z)/f(z)\prec 1+z-z^3/3:=\psi_{ne}(z).$ A similar analysis shows that a necessary condition for the subordination $\psi_{ne}\prec \phi_{car}$ to hold in $\mathbb{D}_r$ is
\[\frac{1}{2}\leq \phi_{car}(-1)\leq \phi_{car}(-r)\leq \psi_{ne}(-r)=1-r+\frac{r^3}{3}.\]
Let $r_{9}$ be the real root of the equation $2r^3-6r+3=0$ in $(0,1)$. The inclusion relation $\psi_{ne}(\mathbb{D}_{r_{9}})\subset \Omega_{car}$ is clearly illustrated in Figure \ref{radius}(e). If a function $f_{9}$ is defined as $zf_{9}'(z)/f_{9}(z)=\psi_{ne}(z)$, then $\psi_{ne}(-r_{9})=1/2$ and this shows that the bound $r_{9}$ is best possible.
\end{proof}

By Theorem \ref{th5.3}, the $\Scar$-radius of the classes $\mathcal{S}^*_L$ and $\mathcal{S}^*_e$ are $3/4$ and $\log2$ respectively. Now, we will compute the $\Scar$-radius for the class $\mathcal{S}$. If $f\in \mathcal{S}$, then $f$ is starlike in $|z|<\tanh(\pi/4)\approx 0.655794$ by \cite[p.\ 98]{DUREN}. Consequently,
\[\frac{zf'(z)}{f(z)}\prec \frac{1+z}{1-z}:=p(z)\quad \mbox{in }\mathbb{D}_{\tanh(\pi/4)}.\]
It suffices to determine the value of $r$ that satisfies $p(z)\prec \phi_{car}(z)$ in $\mathbb{D}_r$. A necessary condition for this subordination to hold is
\[\frac{1}{2}=\phi_{car}(-1)\leq \phi_{car}(-r)\leq p(-r)=\frac{1-r}{1+r}.\]
This gives $r\leq 1/3$. Also, $p(\mathbb{D}_{1/3})\subset \Omega_{car}$ (see Figure \ref{univalent}). Therefore it follows that $p(z)\prec \phi_{car}(z)$ in $\mathbb{D}_{1/3}$. Hence $zf'(z)/f(z)\prec \phi_{car}(z)$ in $|z|<\min\{\tanh(\pi/4),1/3\}=1/3$. The result is sharp for the Koebe function $k(z)=z/(1-z)^2$. This also shows that the $\Scar$-radius for the class $\mathcal{C}$ of close-to-convex functions is $1/3$.

\begin{figure}[h]
	\begin{center}
		\includegraphics[width=2in]{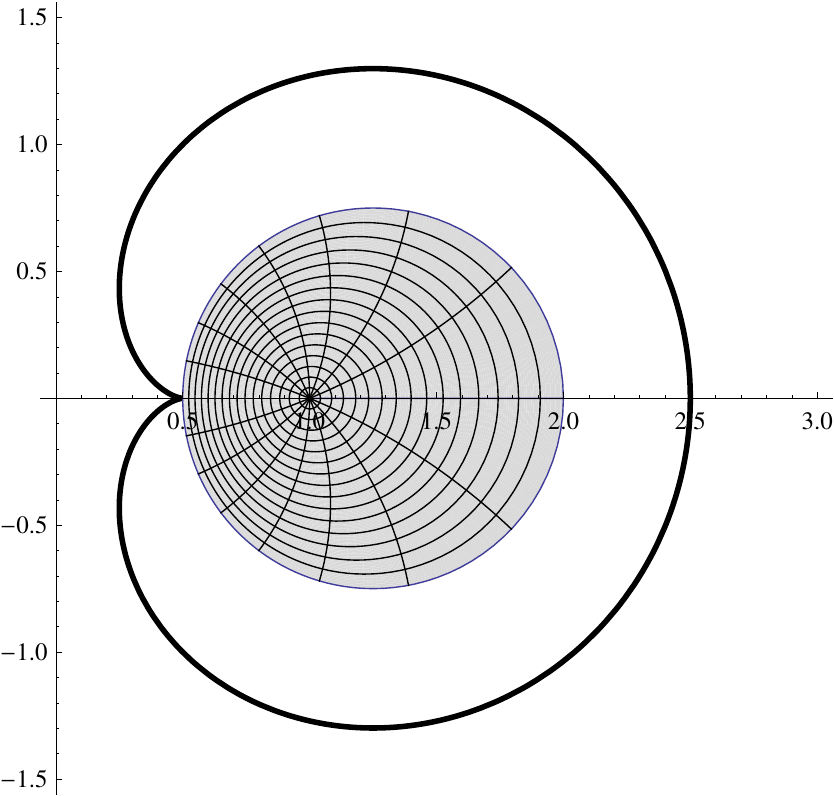}
		\caption{Image of subdisk $\mathbb{D}_{1/3}$ under $p(z)=(1+z)/(1-z)$.}\label{univalent}
	\end{center}
\end{figure}

Till now, the $\Scar$-radius of subclasses for univalent functions was investigated. The next result evaluates the $\Scar$-radius for the classes $\mathcal{BL}(\alpha)$ and $\mathcal{M}(\beta)$ satisfying $zf'(z)/f(z)\prec 1+z/(1-\alpha z^2)$ ($0\leq \alpha <1$) and $\RE (zf'(z)/f(z))<\beta$ ($\beta>1$) respectively which contain non-univalent functions as well. These classes were introduced by Kargar \emph{et al.} \cite{KARGAR} and Uralegaddi \emph{et al.} \cite{MBETA}.

\begin{theorem}
(i) The $\Scar$-radius of the class $\mathcal{BL}(\alpha)$, $0\leq \alpha <1$ is $1/(1+\sqrt{1+\alpha})$. (ii) The $\Scar$-radius of the class $\mathcal{M}(\beta)$, $\beta>1$ is $1/(4\beta-3).$

\end{theorem}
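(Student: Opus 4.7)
The overall strategy is, in each case, to control $zf'(z)/f(z)$ on $|z|\le r$ by a disk (using the subordination defining the class) and then invoke Lemma~\ref{result7} to pin down the largest $r$ for which this disk lies inside $\Omega_{car}$. Since both classes have $p(0)=1$ for $p(z)=zf'(z)/f(z)$, the resulting image disks are centered near $1$ for small $r$, placing us in the regime $\phi_{car}(-1)<a<\phi_{car}(1)$ where Lemma~\ref{result7} applies.

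For part (i), I would take any $f\in\mathcal{BL}(\alpha)$ and use the subordination $p\prec 1+z/(1-\alpha z^2)$ together with the elementary bound $|\zeta/(1-\alpha\zeta^2)|\le r/(1-\alpha r^2)$ on $|\zeta|\le r$ to conclude
\[
\left|\frac{zf'(z)}{f(z)}-1\right|\le \frac{r}{1-\alpha r^2}\qquad(|z|\le r),
\]
the bound being attained at $\zeta=\pm r$. By Lemma~\ref{result7} applied with $a=1$, the disk $|w-1|<1/2$ is the largest disk centered at $1$ that lies inside $\Omega_{car}$, so I would impose $r/(1-\alpha r^2)\le 1/2$ and solve the resulting quadratic $\alpha r^2+2r-1\le 0$. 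Its positive root rationalizes to $1/(1+\sqrt{1+\alpha})$, giving the asserted $\Scar$-radius. Sharpness follows from the function $f_\alpha$ with $zf_\alpha'(z)/f_\alpha(z)=1+z/(1-\alpha z^2)$: at $z=-1/(1+\sqrt{1+\alpha})$ this quantity equals $1/2=\phi_{car}(-1)\in\partial\Omega_{car}$.

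For part (ii), I would first convert the half-plane condition $\RE p<\beta$ into a subordination. Because $p(0)=1$ and $\RE p<\beta$, the function $(\beta-p)/(\beta-1)$ is a normalized Carath\'eodory function, so $(\beta-p)/(\beta-1)\prec(1+z)/(1-z)$, which rearranges to
\[
\frac{zf'(z)}{f(z)}\prec q(z):=\frac{1-(2\beta-1)z}{1-z}.
\]
Since $q$ is a M\"obius transformation, $q(\{|z|\le r\})$ is a closed disk, and its real-axis diameter $[q(r),q(-r)]$ yields center $a(r)=(1-(2\beta-1)r^2)/(1-r^2)$ and radius $R(r)=2(\beta-1)r/(1-r^2)$. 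A direct check using $\beta>1$ shows $1/2<a(r)\le 1$ throughout the relevant range, so Lemma~\ref{result7} reduces the required containment to $R(r)\le a(r)-1/2$, which after clearing denominators becomes $(4\beta-3)r^2+4(\beta-1)r-1\le 0$. The main technical obstacle is to see that this quadratic factors cleanly: its discriminant works out to the perfect square $4(2\beta-1)^2$, producing the positive root $r=1/(4\beta-3)$. Sharpness is then witnessed by $g_\beta$ with $zg_\beta'(z)/g_\beta(z)=q(z)$; a short computation gives $q(1/(4\beta-3))=1/2\in\partial\Omega_{car}$, so the bound cannot be enlarged.
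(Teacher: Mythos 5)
Your proposal is correct and follows essentially the same route as the paper: bound $zf'/f$ over $|z|\le r$ by a disk (centered at $1$ in part (i), at $(1+(1-2\beta)r^2)/(1-r^2)$ in part (ii)), check the center lies in the range where Lemma~\ref{result7} gives $r_a=a-1/2$, and solve the resulting quadratic; the extremal functions you name are the same ones the paper uses. The only cosmetic difference is that you derive the image disk for $\mathcal{M}(\beta)$ explicitly from the M\"obius subordination $q(z)=(1-(2\beta-1)z)/(1-z)$, whereas the paper simply quotes the standard disk containment.
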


\begin{proof}
For the proof of (i), let $f\in\mathcal{BL}(\alpha)$. Then $zf'(z)/f(z)\prec 1+z/(1-\alpha z^2)$ so that
\[\left|\frac{zf'(z)}{f(z)}-1\right|\leq \frac{r}{1-\alpha r^2}.\]
By \lemref{result7}, the above disk lies in the domain $\Omega_{car}$ if $r/(1-\alpha r^2)\leq 1/2$ which is true provided $r\leq 1/(1+\sqrt{1+\alpha}):=r_1.$ The result is sharp for the function
\[f_1(z)=z\left(\frac{1+\sqrt{\alpha}z}{1-\sqrt{\alpha}z}\right)^{1/(2\sqrt{\alpha})}\]
which satisfies $zf'_1(z)/f_1(z)=1+z/(1-\alpha z^2)$ and $zf_1'/f_1$ assumes the value $1/2$ at $z=-r_1$.

In order to prove (ii), if $f\in\mathcal{M}(\beta)$ then
	\[\left|\frac{zf'(z)}{f(z)}-\frac{1+(1-2\beta)r^{2}}{1-r^{2}}\right|\leq\frac{2r(\beta-1)}{1-r^{2}}.\]
As $\beta>1$, $(1+(1-2\beta)r^{2})/(1-r^{2})< 1$ so that \lemref{result7} implies
	\[\frac{2r(\beta-1)}{1-r^{2}}\leq\frac{1+(1-2\beta)r^{2}}{1-r^{2}}-\frac{1}{2}.\]
This gives $r\leq 1/(4\beta-3)$. This bound is sharp as the function $f_0(z)=z(1-z)^{2(\beta-1)} \in \mathcal{M}(\beta)$ satisfies $zf_0'(z)/f_0(z)=1/2$ at $z=1/(4\beta-3)$.
\end{proof}

\section{Radii Constants for the class $\mathcal{S}^*_{car}$}
In this section, we will determine various radii constants for the class $\mathcal{S}^*_{car}$ by making use of the corresponding result of Lemma \ref{result7} for the other subclasses of starlike functions and the geometrical considerations for a subordination relation to hold in a subdisk $|z|<r$.

\begin{theorem}
For the class $\Scar$, the following radii constants are sharp:
\begin{enumerate}
		\item [(a)] For $0\leq \alpha<1$, the $\mathcal{S}^*(\alpha)$-radius is $s_1:=s_1(\alpha)$ where
		\[s_1(\alpha)=\begin{cases}
		1,& 0\leq \alpha\leq 1/4,\\
		\sqrt{\displaystyle\frac{3-4\alpha}{2}},& 1/4<\alpha\leq 5/8,\\
		1-\sqrt{2\alpha-1},& 5/8<\alpha<1.
		\end{cases}\]
		\item[(b)] For $0\leq \alpha<1$, the $\mathcal{S}^*_L(\alpha)$-radius is $s_2:=s_2(\alpha)=-1+((2\sqrt{2}-1)-2(\sqrt{2}-1)\alpha))^{1/2}$. In particular, $s_2(0)=-1+(2\sqrt{2}-1)^{1/2}$.
		\item[(c)] The $\mathcal{S}^*_{RL}$-radius is $s_3:=-1+(1+2(-\gamma+\sqrt{\gamma})^{1/2})^{1/2}\approx 0.253734$,
where $\gamma=2\sqrt{2}-2$.
		\item[(d)] The $\mathcal{S}^*_{R}$-radius is $s_4:=1-(4\sqrt{2}-5)^{1/2}\approx 0.189535.$
\item[(e)] The $\mathcal{S}^*_{sin}$-radius is $s_5:=-1+\sqrt{1+2\sin(1)}\approx 0.637969.$
\item[(f)] The $\mathcal{S}^*_{cosh}$-radius is $s_6:=-1+\sqrt{-1+2 \cosh(1)}\approx 0.444355.$
\item[(g)] The $\mathcal{S}^*_{ne}$-radius is $s_7:=(\sqrt{21}-3)/3\approx 0.527525.$
\item[(h)] The $\mathcal{S}^*_{SG}$-radius is  \[s_8:=-1+\sqrt{1+\frac{2(e-1)}{e+1}}\approx 0.387168\]
\item[(i)] For $0\leq \alpha<1$, the $\mathcal{S}^*[1-\alpha,0]$-radius is $s_9:=-1+\sqrt{3-2\alpha}$.
\item[(j)] For $0< \alpha\leq 1$, the $\mathcal{S}^*[\alpha,-\alpha]$-radius is $s_{10}:=s_{10}(\alpha)$ where
\[s_{10}(\alpha):=\begin{cases}
		w_\alpha,& 0<\alpha\leq \alpha^*,\\
		1,& \alpha^*\leq \alpha\leq 1.
		\end{cases}\]
where
\[w_\alpha=\frac{2\alpha}{\sqrt{1-\alpha^2}}\sqrt{\frac{2}{\sqrt{1+3\alpha^2}}-1}\]
and $\alpha^*=((5+2 \sqrt{13})/27)^{1/2}\approx 0.672505$.
\item[(k)] For $M>1/2$, the $\mathcal{S}^*[1,-(M-1)/M]$-radius is $s_{11}:=s_{11}(M)$ where
		\[s_{11}(M)=\begin{cases}
		a_M,& 1/2\leq M\leq M^*,\\
		b_M,& M^*\leq M< (3+\sqrt{5})/4,\\
1,& M\geq (3+\sqrt{5})/4,
		\end{cases}\]
where $a_M=-1+\sqrt{M-1}$,
\[b_M=\sqrt{2\sqrt{2}M\sqrt{\frac{M-1}{2M-1}}-2(M-1)}\]
and $M^*\approx 1.1423$ is the root of the equation $a_M=b_M$.
\item[(l)] The $\mathcal{S}^*_{C}$-radius is $s_{12}:=1$.
\item[(m)] For $\beta>1$, $\mathcal{M}(\beta)$-radius is $s_{13}:=s_{13}(\beta)$ where
		\[s_{13}(\beta)=\begin{cases}
		\sqrt{2\beta-1}-1,& 1<\beta\leq 5/2,\\
		1,&\beta\geq 5/2.
		\end{cases}\]
\end{enumerate}
\end{theorem}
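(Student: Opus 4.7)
The framework is uniform: $f\in\Scar$ together with Schwarz's lemma gives
\[\frac{zf'(z)}{f(z)}\in\phi_{car}(\overline{\disc_r}),\quad|z|\le r,\]
so that finding the $\mathcal{G}$-radius of $\Scar$ for each class $\mathcal{G}=\mathcal{S}^*(\varphi_{\mathcal{G}})$ in items (a)--(m) reduces to determining the largest $r$ for which $\phi_{car}(\overline{\disc_r})\subseteq D_{\mathcal{G}}:=\varphi_{\mathcal{G}}(\disc)$. In every case sharpness is witnessed by a rotation $f(z)=e^{-i\beta}f_{car}(e^{i\beta}z)$ of $f_{car}(z)=z\exp(z+z^2/4)$, for which $zf'(z)/f(z)=\phi_{car}(e^{i\beta}z)$; the extremal argument $z_0$ of modulus $s_i$ is then chosen so that $\phi_{car}(e^{i\beta}z_0)$ lies on $\partial D_{\mathcal{G}}$.

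Two quantitative ingredients carry all the cases. First, for the half-plane items (a) and (m), \lemref{result1} supplies the sharp bounds on $\RE\phi_{car}(z)$ on $|z|=r$ directly. For (m), the inequality $\max_{|z|=r}\RE\phi_{car}=1+r+r^2/2\le\beta$ yields $s_{13}=\sqrt{2\beta-1}-1$, saturating at $r=1$ when $\beta\ge 5/2$. For (a), the two-branch formula for $\min_{|z|=r}\RE\phi_{car}$---the parabolic branch $1-r+r^2/2$ for $r\le 1/2$ and the elliptic branch $(3-2r^2)/4$ for $r\ge 1/2$---produces the three-branch formula $s_1(\alpha)$ with breakpoints at $\alpha=1/4$ (where $r=1$ becomes attainable) and $\alpha=5/8$ (where the extremal $z_0$ crosses $|z|=1/2$). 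Second, for every remaining item one uses the explicit identity
\[|\phi_{car}(re^{i\theta})-1|^2\;=\;r^2+r^3\cos\theta+\tfrac{r^4}{4},\]
with maximum $(r+r^2/2)^2$ at $\theta=0$ and minimum $(r-r^2/2)^2$ at $\theta=\pi$. Hence $\phi_{car}(\overline{\disc_r})$ is contained in the closed disk $\overline{B}(1,r+r^2/2)$, touching its boundary only at the point $1+r+r^2/2$.

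This containment reduces items (b)--(i) and (l) to the scalar inequality $r+r^2/2\le\rho_{\mathcal{G}}$, where $\rho_{\mathcal{G}}$ is the largest radius with $\overline{B}(1,\rho_{\mathcal{G}})\subseteq D_{\mathcal{G}}$, yielding the uniform closed form $s_i=-1+\sqrt{1+2\rho_{\mathcal{G}}}$. For the ``right-extremal'' targets $\rho_{\mathcal{G}}=\varphi_{\mathcal{G}}(1)-1$: namely $(1-\alpha)(\sqrt{2}-1)$ for (b), $\sin 1$ for (e), $\cosh(1)-1$ for (f), $2/3$ for (g), $(e-1)/(e+1)$ for (h), and $1-\alpha$ for (i). For (c) and (d) the minimum of $|w-1|$ on $\partial D_{\mathcal{G}}$ occurs off the real axis, and computing $\rho_{\mathcal{G}}$ requires minimising $|(w-\sqrt{2})^2-1|$ (respectively the $\psi_R$-defect) over the parametrised boundary; in (c) this reduces to a quadratic in $s:=2\cos^2(\theta/4)-1$ whose critical point gives $\rho_3=\sqrt{\sqrt\gamma-\gamma}$ with $\gamma=2\sqrt{2}-2$. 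Item (l) is immediate once $\Omega_{car}\subseteq D_{\mathcal{S}^*_C}$ is verified by a direct cardioid comparison, giving $s_{12}=1$.

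The Janowski items (j) and (k) have target disks with centres $c\ne 1$; here one maximises $|\phi_{car}(re^{i\theta})-c|^2$ over $\theta$, a quadratic in $\cos\theta$, and compares its interior critical value with its boundary values at $\theta\in\{0,\pi\}$. The two competing expressions appear as the two branches of $s_{10}$ and $s_{11}$, and the breakpoints $\alpha^\ast$ and $M^\ast$ are precisely where these branches agree---in direct analogy with the piecewise definition of $R_a$ in \lemref{result7}. The principal obstacle of the whole theorem is the verification step for items in which $\rho_{\mathcal{G}}$ is attained off the real axis (notably (c), (d), and (f)): after fixing the candidate $\rho_{\mathcal{G}}$, one must confirm that the pinched set $\phi_{car}(\overline{\disc_r})$ actually lies inside $D_{\mathcal{G}}$ at every angle, not only in the direction of maximal growth. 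This is handled by a direct comparison of the boundary parametrisations of $\partial\phi_{car}(\overline{\disc_r})$ and $\partial D_{\mathcal{G}}$ in the spirit of the figures used in the earlier sections; sharpness is then exhibited by choosing the rotation $\beta$ so that the contact occurs at $z=s_i$.
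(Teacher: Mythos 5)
Your overall architecture---reducing each item to the inclusion $\phi_{car}(\overline{\mathbb{D}_r})\subseteq\varphi_{\mathcal G}(\disc)$, handling (a) and (m) via Lemma~\ref{result1}, and treating the Janowski items (j), (k) by maximising $|\phi_{car}(re^{i\theta})-c|^2$ as a quadratic in $\cos\theta$---matches the paper's, and your identity $|\phi_{car}(re^{i\theta})-1|^2=r^2+r^3\cos\theta+r^4/4$ correctly yields $\phi_{car}(\overline{\mathbb{D}_r})\subseteq\overline{B}(1,r+r^2/2)$, which is exactly how the paper handles (b), (c), (g), (h), (i). The gap is your uniform reduction of (b)--(i) to ``$r+r^2/2\le\rho_{\mathcal G}$, where $\rho_{\mathcal G}$ is the largest radius with $\overline{B}(1,\rho_{\mathcal G})\subseteq D_{\mathcal G}$'': this fails for (d) and (f), and applied literally it produces radii strictly smaller than the stated sharp values. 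For (f), $\cosh(\disc)$ meets the real axis only down to $\cos 1\approx 0.540$ (the image of $z=i$), so the inradius of $\cosh(\disc)$ at the centre $1$ is $1-\cos 1\approx 0.460$, not $\cosh 1-1\approx 0.543$ as you assert; the disk $\overline{B}(1,\cosh 1-1)$ protrudes out of $\cosh(\disc)$ on the left, and your closed form with the true inradius would give $-1+\sqrt{3-2\cos 1}\approx 0.385\ne s_6\approx 0.444$. For (d), $\psi_R(-1)=2\sqrt{2}-2$ is a boundary point at distance $3-2\sqrt{2}\approx 0.172$ from $1$, while $s_4+s_4^2/2\approx 0.208$; so $\overline{B}(1,s_4+s_4^2/2)\not\subseteq\psi_R(\disc)$ and the inradius route gives $-1+\sqrt{7-4\sqrt{2}}\approx 0.159\ne s_4\approx 0.190$.

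What you are missing is that for these two targets the binding constraint is not the circumscribing disk but the exact horizontal extremities of $\phi_{car}(\overline{\mathbb{D}_r})$: on the left it reaches only to $\phi_{car}(-r)=1-r+r^2/2$, at distance $r-r^2/2<r+r^2/2$ from $1$. The paper's proof of (d) is precisely the left-hand necessary condition $1-r+r^2/2\ge\psi_R(-1)=2\sqrt{2}-2$, giving $s_4=1-\sqrt{4\sqrt{2}-5}$, and of (f) the right-hand condition $1+r+r^2/2\le\cosh 1$; in both cases the full two-dimensional inclusion $\phi_{car}(\mathbb{D}_{s_i})\subseteq D_{\mathcal G}$ must then be verified separately (the paper does this graphically, as it also does for (e) and (l)). Your closing caveat about ``pinched sets'' gestures at this, but as written your reduction does not deliver (d) or (f). Two smaller points: your list is internally inconsistent about (f), which you first call ``right-extremal'' and later group with the off-axis cases; and the rotation device adds nothing for sharpness, since $\phi_{car}(e^{i\beta}\mathbb{D}_r)=\phi_{car}(\mathbb{D}_r)$---sharpness is the rotation-invariant statement that $\phi_{car}(\mathbb{D}_r)\not\subseteq D_{\mathcal G}$ for $r>s_i$, which for the real-axis-contact items already follows from $f_{car}$ and for (c) requires the off-axis tangency that the paper, too, only exhibits in a figure.
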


\begin{proof}
Since $f \in \Scar$, $zf'(z)/f(z)\prec \phi_{car}(z)$ where $\phi_{car}$ is given by \eqref{eq1.1} and $\phi_{car}(\mathbb{D})=\Omega_{car}$. Let $|z|=r$. Let $f_{car}$ denotes the function given by \eqref{fcar}.
	
For $(a)$, by Theorem \ref{th4.1}(i), $f \in \mathcal{S}^*(\alpha)$ for $0\leq \alpha\leq 1/4$. Let $1/4<\alpha<1$. For $0<r\leq 1/2$, Lemma \ref{result1} gives \[\RE\left(\frac{zf'(z)}{f(z)}\right)>\min_{|z|=r} \RE(\phi_{car}(z))=1-r+\frac{r^2}{2}>\alpha\]
if $r<1-\sqrt{2\alpha-1},$ where $5/8<\alpha<1.$ Similarly for the case when $1/2\leq r<1,$ it is easily seen that $\RE(zf'(z)/f(z))>(3-2r^2)/4>\alpha$ provided $r<\sqrt{(3-4\alpha)/2}$, where $1/4<\alpha \leq 5/8.$ The result is sharp for the function $f_{car}$.

 For $f\in\Scar,$ $zf'(z)/f(z)\prec 1+z+z^2/2$ so that
 \begin{equation}\label{eq6.1}
 \left|\frac{zf'(z)}{f(z)}-1\right|\leq r+\frac{r^2}{2}.
 \end{equation}
 For $(b)$, the disk \eqref{eq6.1} lies inside the domain $|((w-\alpha)/(1-\alpha))^2-1|<1$ provided $r+r^2/2\leq (\sqrt{2}-1)(1-\alpha)$ by \cite[Lemma 2.3, p.\ 238]{KHATTER}. This implies $r\leq -1+((2\sqrt{2}-1)-2(\sqrt{2}-1)\alpha))^{1/2}:=s_2(\alpha)$. This bound is best possible for the function $f_{car}$ since $zf'_{car}(z)/f_{car}(z)=\alpha+(1-\alpha)\sqrt{2}$ at $z=s_2$. In $(c)$, the disk \eqref{eq6.1} lies inside the domain $|(w-\sqrt{2})^2-1|<1$ if $r+r^2/2\leq ((2\sqrt{2}-2)^{1/2}-(2\sqrt{2}-2))^{1/2}$ by \cite[Lemma 3.2]{MEND}.  This simplifies to $r\leq s_3\approx 0.2537371.$ The sharpness of the radii $s_2(0)$ and $s_3$ is depicted in Figure \ref{abc}.

 \begin{figure}[h]
	\begin{center}
		\subfigure[$s_2(0)$]{\includegraphics[width=1.2in]{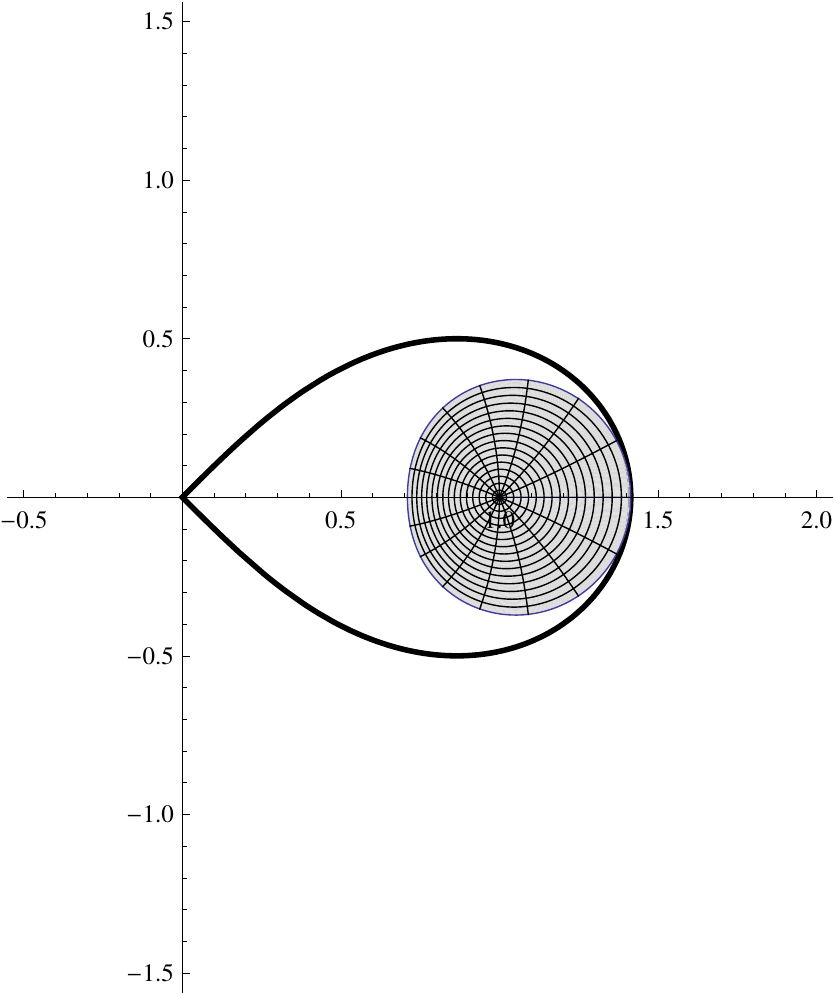}}\hspace{10pt}
		\subfigure[$s_3$]{\includegraphics[width=1.2in]{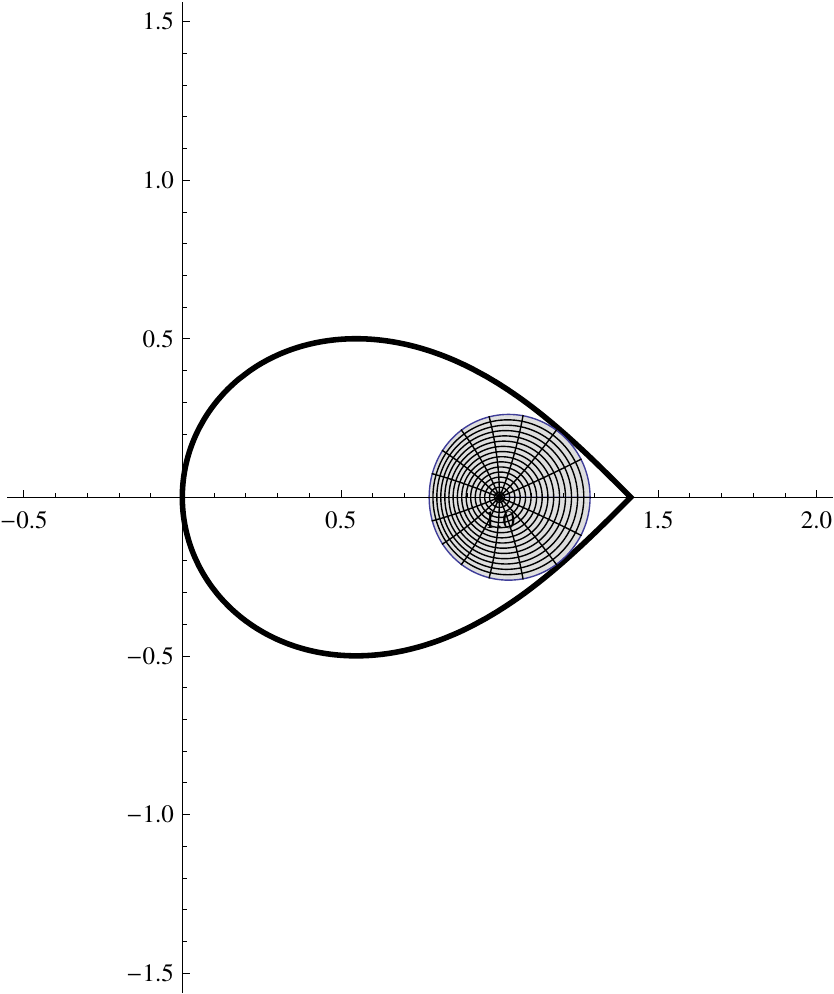}}\hspace{10pt}
\subfigure[$s_7$]{\includegraphics[width=1.2in]{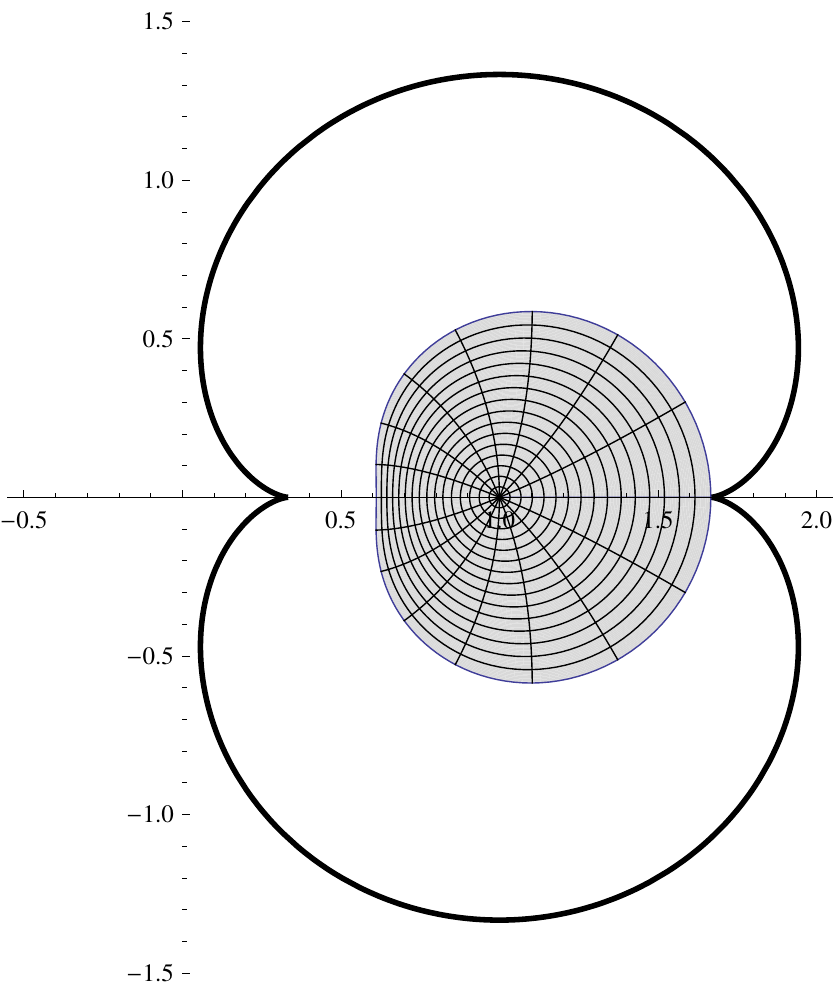}}\hspace{10pt}
\subfigure[$s_8$]{\includegraphics[width=1.2in]{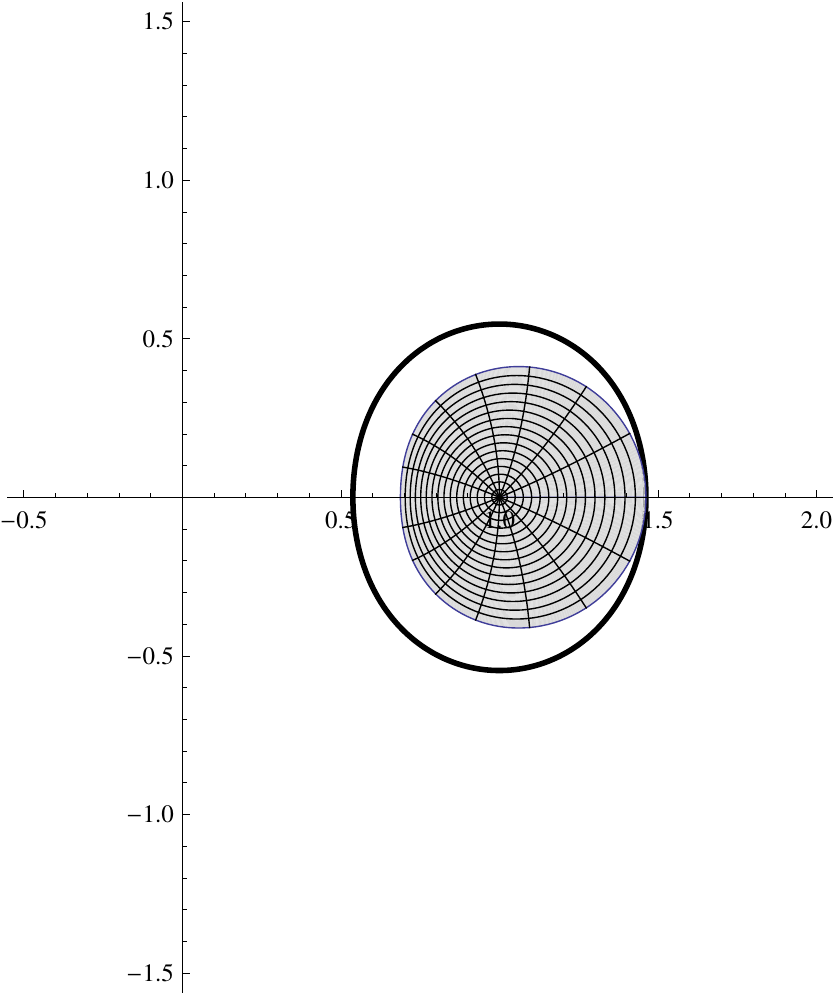}}
		\caption{Sharpness of $\mathcal{S}^*_{L}$, $\mathcal{S}^*_{RL}$, $\mathcal{S}^*_{ne}$ and $\mathcal{S}^*_{SG}$ radii for the class  $\Scar$.}\label{abc}
	\end{center}
\end{figure}

For proving (d), observe that a necessary condition for the subordination $\phi_{car}(z)\prec \psi_R(z)$ to hold in $\mathbb{D}_r$ is
\[2(\sqrt{2}-1)\leq \psi_{R}(-1)\leq \psi_{R}(-r)\leq \phi_{car}(-r)=1-r+\frac{r^2}{2}\]
where $\psi_R$ is given by \eqref{eqr}. This is possible if $r\leq 1-(4\sqrt{2}-5)^{1/2}:=s_4.$ In fact, $\phi_{car}(\mathbb{D}_{s_4})\subset \psi_R(\mathbb{D})$ (see Figure \ref{def}(a)). Thus the $\mathcal{S}^*_{R}$-radius of the class $\Scar$ is at least $s_4$. This bound cannot be further improved as seen by considering the function $f_{car}$. Similarly, for part (e), the relation
\[1+r+\frac{r^2}{2}=\phi_{car}(r)\leq\psi_s(r)\leq \psi_s(1)=1+\sin (1)\]
is necessary for the subordination $\phi_{car}(z)\prec \psi_s(z)$ to satisfy in $\mathbb{D}_r$, where $\psi_s$ is defined in Theorem \ref{th5.3}(h). This gives $r\leq -1+\sqrt{1+2\sin(1)}:=s_5.$ Also, $\phi_{car}(\mathbb{D}_{s_5})\subset \psi_s(\mathbb{D})$ by Figure \ref{def}(b) so that $\mathcal{S}^*_{sin}$-radius is at least $s_5$. The bound is sharp for the function $f_{car}$. The same procedure can be applied in part (f) to show that $\mathcal{S}^*_{cosh}$-radius is $s_6:=-1+\sqrt{-1+2 \cosh(1)}$ by considering the inequality $1+r+r^2/2\leq \cosh (1)$ and Figure \ref{def}(c) illustrating that the image domain of the function $\cosh z$ under the unit disk contains $\phi_{car}(\mathbb{D}_{s_6})$.

\begin{figure}[h]
	\begin{center}
		\subfigure[$s_4$]{\includegraphics[width=1.5in]{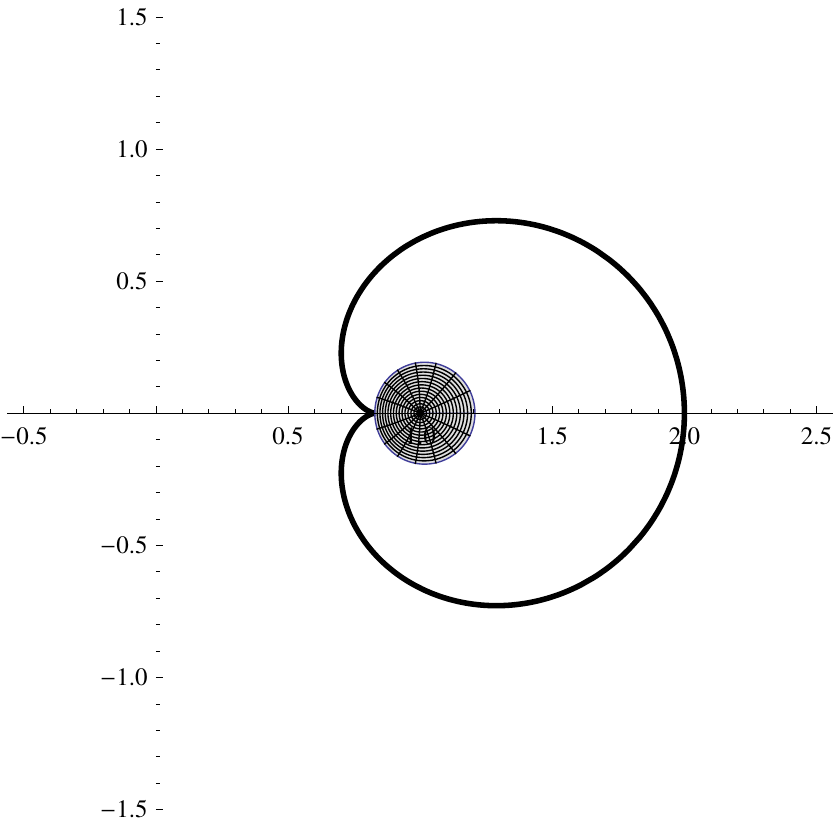}}\hspace{10pt}
		\subfigure[$s_5$]{\includegraphics[width=1.5in]{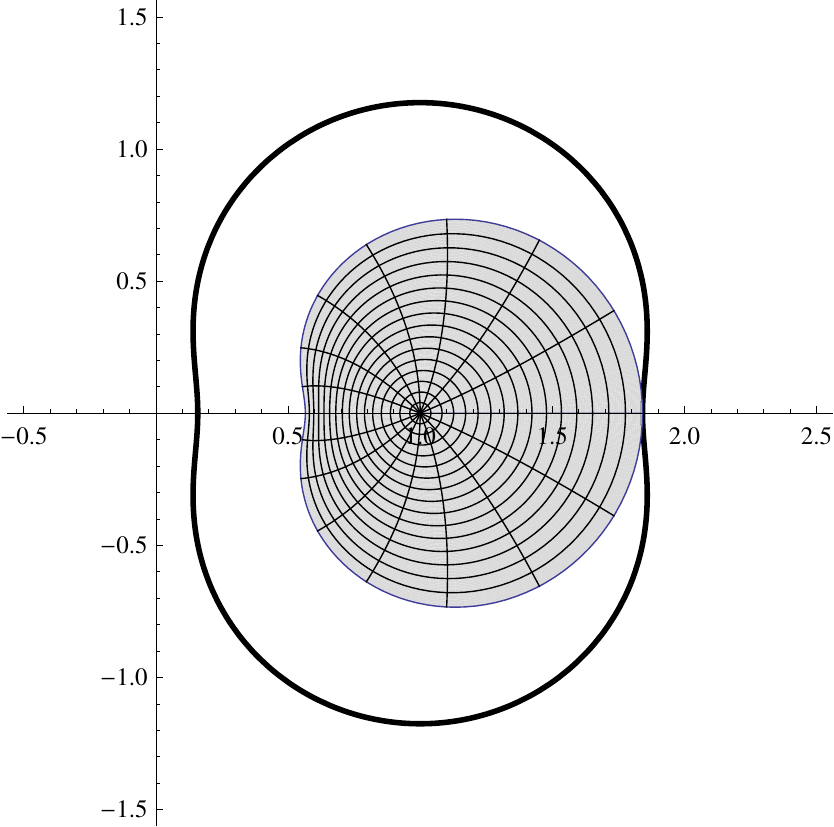}}\hspace{10pt}
\subfigure[$s_6$]{\includegraphics[width=1.5in]{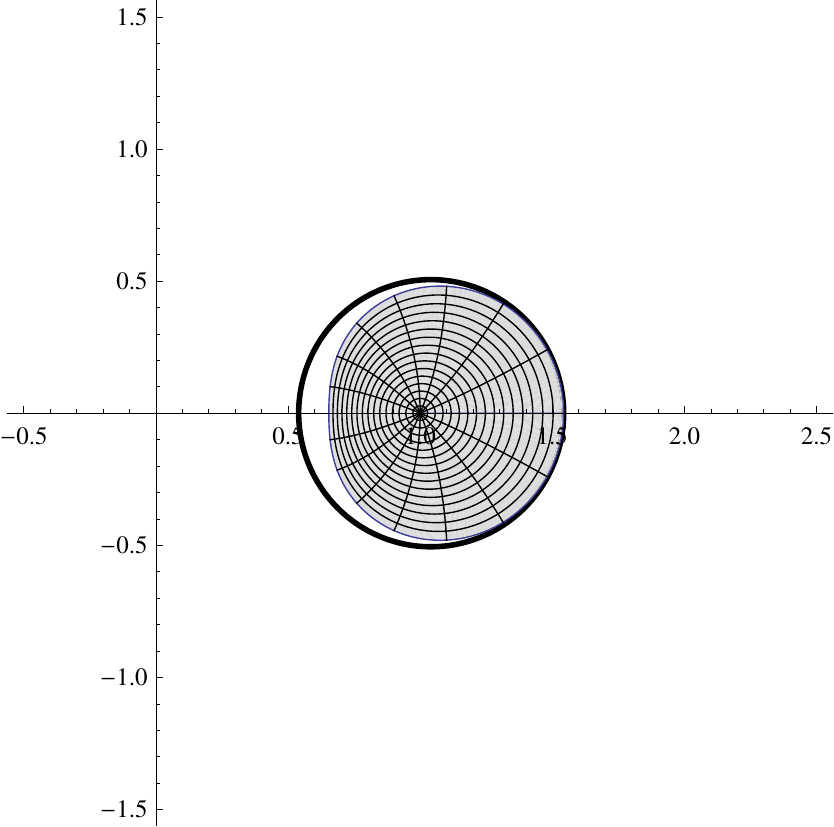}}
		\caption{Inclusions associated with a rational, trigonometric and hyperbolic function.}\label{def}
	\end{center}
\end{figure}

In order to prove (g), note that the disk \eqref{eq6.1} lies inside the domain $\psi_{ne}(\mathbb{D})$ where $\psi_{ne}$ is defined in Theorem \ref{th5.3}(i) if $r+r^2/2\leq 2/3$ (see \cite{WANI}). This simplifies to $r\leq(\sqrt{21}-3)/3:=s_7.$ The result is sharp for the function $f_{car}(z)=z \exp(z+z^2/4)$ as $zf_{car}'(z)/f_{car}(z)=5/3=\psi_{ne}(1)$ at $z=s_7$. In the similar fashion, the part (h) can proved by applying \cite[Lemma 2.2]{GOEL} to show that the disk \eqref{eq6.1} lies in the domain $\psi_{SG}(\mathbb{D})$ if $r+r^2/2\leq (e-1)/(e+1)$, where $\psi_{SH}(z)=2/(1+e^{-z})$. The last inequality gives the desired bound $s_8:=-1+(1+2((e-1)/(e+1)))^{1/2}$ and $zf_{car}'(z)/f_{car}(z)$ equals $2e/(1+e)=\psi_{SH}(1)$ at $z=s_8$. The sharpness of constants $s_7$ and $s_8$ is also depicted graphically in Figure \ref{abc}.

For (i), the disk \eqref{eq6.1} lies inside the domain $|w-1|<1-\alpha$ provided $r+r^2/2\leq 1-\alpha$ which gives $r\leq -1+\sqrt{3-2\alpha}:=s_9$ and at the point $z=s_9$, $zf_{car}'(z)/f_{car}(z)=2-\alpha$. In part (j), we will show that $\phi_{car}(z)\in \{w:|(w-1)/(w+1)|<\alpha\}$ for all $z\in \mathbb{D}_{s_{10}}$. For $z=re^{it}$, we have
\[|\phi_{car}(z)-1|^2=r^2+r^3\cos t+\frac{r^4}{4}\]
and
\[|\phi_{car}(z)+1|^2=4+4r\cos t+r^2+2r^2 \cos(2t)+r^3 \cos t+\frac{r^4}{4}.\]
If we set $x=\cos t$, then the problem now reduces to show that the function
\begin{align*}
g(x,r):&=\alpha^2|\phi_{car}(z)+1|^2-|\phi_{car}(z)-1|^2\\
&=4\alpha^2+4\alpha^2 r x-\alpha^2 r^2+4\alpha^2 r^2 x^2+\alpha^2 r^3 x+\frac{\alpha^2 r^4}{4}-r^2-r^3 x-\frac{r^4}{4}
\end{align*}
is positive for all $x\in [-1,1]$ and $0<r\leq s_{10}$. Note that
\begin{equation}\label{eq6.2}
h(x,r):=\frac{\partial}{\partial x}g(x,r)=-r^3 + 4 r \alpha^2 + r^3 \alpha^2 + 8 r^2 x \alpha^2\quad \mbox{and}\quad \frac{\partial^2}{\partial x^2}g(x,r)=8r^2\alpha^2.
\end{equation}
Consider the following four observations:
\begin{enumerate}
\item $h(x_0,r)=0$ where $x_0=(r^2(1-\alpha^2)-4\alpha^2)/(8\alpha^2 r)$. By \eqref{eq6.2}, $g(x,r)$ has a relative minimum at $x_0$.
\item The quantity
\[g(-1,r)=\frac{1}{4} (2 r(1-\alpha)-(1-\alpha)r^2+4 \alpha) ((1+\alpha)r^2+4 \alpha -2 r (1+\alpha))\]
 is positive if $r\leq 1-\sqrt{(1-3 \alpha)/(1+\alpha)}:=u_\alpha$, $0<\alpha\leq 1/3$ and $g(-1,r)>0$ for all $r$ if $\alpha\geq 1/3$.
 \item The quantity
 \[g(1,r)=\frac{1}{4} (4 \alpha-2 r(1-\alpha)-(1-\alpha)r^2) (4 \alpha+2 r(1+\alpha)+(1+\alpha)r^2)\]
is positive if $r\leq\sqrt{(1+3\alpha)/(1-\alpha)}-1:= v_\alpha$, $0<\alpha\leq 3/7$ and $g(1,r)>0$ for every $r$ if $\alpha\geq 3/7$.
\item $g(x_0,r)>0$ provided $r\leq w_\alpha$, $0<\alpha\leq \alpha^*$ and $g(x_0,r)>0$ for all $r$ and $\alpha\geq \alpha^*$, where $w_\alpha$ and $\alpha^*$ are given in the statement of the theorem. Note that $w_\alpha \leq u_\alpha$ for all $\alpha \in (0,1/3]$ and $w_\alpha \leq v_\alpha$ for all $\alpha \in (0,3/7]$.
 \end{enumerate}
These observations lead us to the desired result with sharpness depicted by the function $f_{car}$.

For proving (k), we need to prove that $\phi_{car}\in \{w:|w-M|<M\}$ for all $|z|<r_{11}$. Let $z=r e^{it}$, $x        =\cos t$. It suffices to show that the function
\begin{align*}
p(x,r)&=M^2- |\phi_{car}(z)-M|^2\\
&=2M-1-Mr^2-r^3 x -\frac{r^4}{4}-2(1-M)rx-2(1-M)r^2x^2
\end{align*}
is positive for all $x\in [-1,1]$ and $0<r\leq s_{11}$. Note that $p(-1,r)>0$ for all $r$, $p(1,r)>0$ for all $r\in (0,a_M]$ with $1/2<M\leq 5/4$ and $p(1,r)>0$ for all $r\in (0,1]$ if $M>5/4$. Also, we have
\begin{equation}\label{eq6.3}
\frac{\partial}{\partial x}p(x,r)=2(M-1)r-r^3+4(M-1)r^2 x\quad \mbox{and}\quad \frac{\partial^2}{\partial x^2}p(x,r)=4(M-1)r^2.
\end{equation}
If $1/2<M\leq 1$, then \eqref{eq6.3} shows that $p(x,r)$ is a decreasing function of $x$ so that $p(x,r)\geq p(1,r)>0$ as $r\leq a_M$. If $M>1$, then $(\partial/\partial x)p(x,r)$ vanishes at $x_0=(r^2-2(M-1))/(4(M-1)r)$ which is a point of relative minima and
\[p(x_0,r)=\frac{4M^2(3-2r^2)+(r^2-2)^2-2M(r^2-2)(r^2-4)}{8(M-1)}>0\]
provided $0<r<b_M$ and $1<M<(3+\sqrt{5})/4$. Let $M^*$ be the root of the equation $a_M=b_M$.  Similar calculation carried out in \cite[Theorem 3.1]{KANSH} shows that if $1<M\leq M^*$, then $p(x,r)>0$ for $r\leq a_M$ and if $M^*\leq M<(3+\sqrt{5})/4$, then $p(x,r)>0$ for $r\leq b_M$. If $M\geq (3+\sqrt{5})/4$, then the result follows by Theorem \ref{th4.1}(viii). Here $a_M$ and $b_M$ are defined in the statement of the theorem.

Figure \ref{xyz} shows that the inclusion $\phi_{car}(\mathbb{D})\subset \psi_C(\mathbb{D})$ holds where $\psi_C$ is defined in Theorem \ref{th5.3}(e). Hence $\Scar\subset\mathcal{S}^*_C$ and this proves (l).
\begin{figure}[h]
	\begin{center}
		\includegraphics[width=2in]{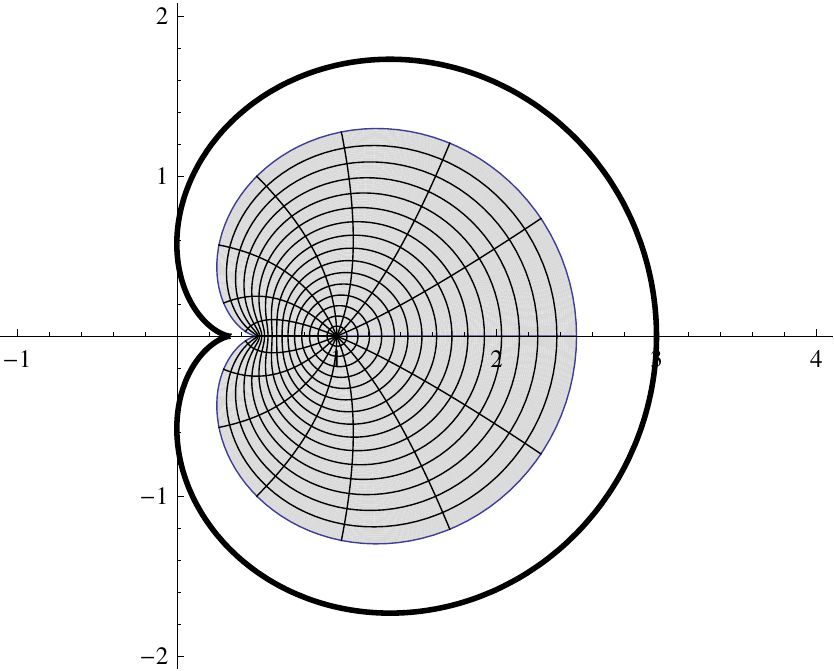}
		\caption{$\mathcal{S}^*_C$-radius for the class $\Scar$ is unity.}\label{xyz}
	\end{center}
\end{figure}

For proving (m), firstly we will show that $\Scar \subset \mathcal{M}(\beta)$ if $\beta\geq 5/2$. To see this, note that if $f \in \Scar$, Lemma \ref{result1} gives $\RE(zf'(z)/f(z))<\max\{\RE(\phi_{car}(z)):|z|=1\}=5/2$. Thus the $\mathcal{M}(\beta)$-radius of the class $\Scar$  is 1 if $\beta \in [5/2,\infty)$. Again, \lemref{result1} implies that $\RE(zf'(z)/f(z))<1+r+r^2/2<\beta$ provided $r<-1+\sqrt{2\beta-1}:=s_{12}$ where $1<\beta<5/2.$ The function $f_{car}$ satisfies $zf_{car}'(z)/f_{car}(z)=\beta$ at $z=s_{12}$.
\end{proof}

\section{$\mathcal{S}^*_{car}$-radius for ratio functions}
Let $R_i^\chi$ ($i=1,2,3$) denote the $\Scar$-radius of the classes $\mathcal{F}_i^\chi$ ($i=1,2,3$) introduced in the first section. In this section, we will compute these radii for several choices of $\chi$. The proof of the result makes use of the fact that an analytic function $p(z)=1+c_1 z+c_2 z^2+\cdots$ with $\RE p(z)>\alpha$ for all $z\in \mathbb{D}$ satisfies
\[\left|\frac{zp'(z)}{p(z)}\right|\leq \frac{2 r(1-\alpha)}{(1-r)(1+(1-2\alpha)r)}\]
for $|z|=r<1$, where $0\leq \alpha<1$. The class of such functions is denoted by $\mathcal{P}(\alpha)$ and set $\mathcal{P}:=\mathcal{P}(0)$. Also, the transformations $1/(1+z)$, $(1+z)/(1-z)$ and $(z+1)/(z+2)$ maps the disk $|z|\leq r$ onto the disks
\[	 \left|\frac{1}{1+z}-\frac{1}{1-r^2}\right|\leq\frac{r}{1-r^2},\quad \quad\left|\frac{1+z}{1-z}-\frac{1+r^2}{1-r^2}\right|\leq \frac{2r}{1-r^2}\]
and
\[\left|\frac{z+1}{z+2}-\frac{2-r^2}{4-r^2}\right|\leq \frac{r}{4-r^2}\]
respectively.

\begin{theorem}\label{th7.1}
The $\Scar$-radius of the classes $\mathcal{F}_i^\chi$ ($i=1,2,3$) is given by the following table:
\begin{center}
\begin{tabular}{QQQQQ}
\toprule
S. No.& \qquad\chi & \qquad R_1^\chi &\qquad R_2^\chi & \qquad R_3^\chi \\\midrule
(a) & \qquad z & \quad\displaystyle\frac{1}{4+\sqrt{17}} &\quad\displaystyle\frac{1}{3+2\sqrt{3}} & \quad\displaystyle\frac{1}{2+\sqrt{5}} \\\midrule
(b) & \quad \displaystyle\frac{z}{1+z} & \quad 5-2\sqrt{6} &\quad \sqrt{17}-4 &\quad 3-2\sqrt{2} \\\midrule
(c) & \quad\displaystyle\frac{z}{1-z^2} & \quad r_1\approx 0.11667 &\quad r_2\approx0.14326 &\quad  r_3\approx0.20213 \\\midrule
(d) & \quad\displaystyle\frac{z}{(1-z)^2}& \quad\displaystyle\frac{6-\sqrt{33}}{3}& \quad 5-2\sqrt{6} & \quad \displaystyle\frac{4-\sqrt{13}}{3}\\\midrule
(e) & \quad z+\displaystyle\frac{z^2}{2}&\quad s_1\approx0.10924 &\quad s_2\approx0.13414 &\quad s_3\approx0.19028\\\bottomrule
\end{tabular}
\end{center}
Here $r_1$, $r_2$, $r_3$ are the smallest positive real root of the equations $3r^4-8r^3-4r^2-8r+1=0$, $r^4-6r^3-6r^2-6r+1=0$ and $ 3r^4-4r^3-4r^2-4r+1=0$ respectively in $(0,1)$. Also $s_1$, $s_2$, $s_3$ are the smallest positive real root of the equations $3r^3+6r^2-19r+2=0$, $5r^3-15r+2=0$ and $3r^3+2r^2-11r+2=0$ respectively in $(0,1)$. All the estimates are sharp.
\end{theorem}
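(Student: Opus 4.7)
The plan is to handle the three families $\mathcal{F}_i^\chi$ ($i=1,2,3$) uniformly through a product representation. For $f\in\mathcal{F}_3^\chi$ I will set $p=f/\chi$; for $f\in\mathcal{F}_1^\chi$ and $f\in\mathcal{F}_2^\chi$ I will set $p_1=g/\chi$ and $p_2=f/g$. In every case $p$ and $p_1$ lie in the Carath\'eodory class $\mathcal{P}$, while in the $\mathcal{F}_2^\chi$-case $p_2$ satisfies $p_2(0)=1$ and $|p_2-1|<1$, so $p_2\prec 1+z$. Taking logarithmic derivatives gives the key identity
\[\frac{zf'(z)}{f(z)}=\frac{z\chi'(z)}{\chi(z)}+\sum_i\frac{zp_i'(z)}{p_i(z)},\]
so the task is to control each summand on $|z|=r$ and then invoke \lemref{result7}.

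For the first summand, $z\chi'(z)/\chi(z)$ will be computed explicitly for each $\chi$: it equals $1$, $1/(1+z)$, $(1+z^2)/(1-z^2)$, $(1+z)/(1-z)$ and $2(z+1)/(z+2)$ in the five rows of the table, and the image disks of $|z|\leq r$ under the M\"obius-type maps $1/(1+w)$, $(1+w)/(1-w)$ and $(w+1)/(w+2)$ recorded in the section preamble give explicit centers and radii in each row. For the Carath\'eodory factors I will use the bound $|zp'(z)/p(z)|\leq 2r/(1-r^2)$ from the section preamble. For the $\mathcal{F}_2^\chi$-factor the Carath\'eodory estimate is too weak to produce the sharp constant; instead I will write $p_2=1+\omega$ with $\omega$ a Schwarz function and apply Schwarz--Pick, obtaining
\[\left|\frac{zp_2'(z)}{p_2(z)}\right|=\left|\frac{z\omega'(z)}{1+\omega(z)}\right|\leq \frac{r(1-|\omega|^2)}{(1-r^2)(1-|\omega|)}=\frac{r(1+|\omega|)}{1-r^2}\leq\frac{r}{1-r}.\]
Summing these contributions yields an inclusion $|zf'(z)/f(z)-a(r)|\leq R(r)$ with explicit rational $a(r)$ and $R(r)$. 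A direct check confirms that $1/2<a(r)\leq 3/2$ throughout the relevant range of $r$, so by the first case of \lemref{result7} the disk lies inside $\Omega_{car}$ exactly when $R(r)\leq a(r)-1/2$.

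Clearing denominators in this inequality produces a polynomial condition in $r$: quadratic and solvable in closed form for $\chi=z$, $\chi=z/(1+z)$ and $\chi=z/(1-z)^2$, which yields rows (a), (b) and (d); quartic for $\chi=z/(1-z^2)$, giving row (c); and quartic for $\chi=z+z^2/2$, in which case I expect the factor $(r+2)$ to cancel, leaving the cubic polynomials $3r^3+6r^2-19r+2$, $5r^3-15r+2$ and $3r^3+2r^2-11r+2$ that define $s_1,s_2,s_3$ in row (e). In each case the smallest positive root in $(0,1)$ is the declared radius. Sharpness will be verified on the extremal $f_{*}(z)=\chi(z)\bigl((1+z)/(1-z)\bigr)^{k}$, with an extra factor $1+z$ in the $\mathcal{F}_2^\chi$-case, where $k=1$ for $\mathcal{F}_3^\chi$ and $k=2$ for $\mathcal{F}_1^\chi$; evaluating $zf_{*}'/f_{*}$ at the negative real boundary point recovers the value $1/2\in\partial\Omega_{car}$. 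The main obstacles I anticipate are the Schwarz--Pick estimate needed to handle the subordination $p_2\prec 1+z$ (a plain Carath\'eodory bound loses sharpness) and the algebraic factorization $(r+2)$ that reduces the two quartics coming from $\chi=z+z^2/2$ to cubics; once these are in place the remaining fifteen cases collapse to routine computation.
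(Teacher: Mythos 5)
Your proposal follows essentially the same route as the paper: the same product decomposition into Carath\'eodory-type factors, the same logarithmic-derivative identity, the same disk estimates (your Schwarz--Pick bound on $f/g\prec 1+z$ gives exactly the $r/(1-r)$ that the paper gets from the $\mathcal{P}(1/2)$ estimate applied to $g/f$), the same reduction to $R(r)\leq a(r)-\tfrac12$ via Lemma~\ref{result7}, and you correctly anticipate both the resulting polynomials and the $(r+2)$ factorization in row (e). The one detail that needs fixing is the sharpness recipe: the blanket prescription $f_*=\chi\cdot((1+z)/(1-z))^k$ evaluated at $z=-r$ does not work for rows (b) and (c), because there the center $a(r)$ (namely $1/(1-r^2)$, resp.\ $(1+r^4)/(1-r^4)$) approaches the boundary point $\tfrac12$ of $\Omega_{car}$ along the direction determined by $z\chi'/\chi$; one must instead take factors $(1-z)/(1+z)$ evaluated at $z=+r$ for $\chi=z/(1+z)$, and factors $(1+iz)/(1-iz)$ evaluated at $z=ir$ for $\chi=z/(1-z^2)$, as the paper does.
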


\begin{proof}
(a) Let $\chi(z)=z$. Firstly, suppose that $f\in\mathcal{F}_1^\chi$ and let us define functions $p,h:\disc\rightarrow\mathbb{C}$ by $p(z)=g(z)/z$ and $h(z)=f(z)/g(z)$. By definition of the class $\mathcal{F}_1^\chi$, $p,h\in\mathcal{P}$. Since $f(z)=zp(z)h(z),$ we have
	\[\frac{zf'(z)}{f(z)}=1+\frac{zh'(z)}{h(z)}+\frac{zp'(z)}{p(z)}\]
so that
	\[\left|\frac{zf'(z)}{f(z)}-1\right|\leq \frac{4r}{1-r^{2}}. \]
The above disk lies in the domain $\Omega_{car}$ if $4r/(1-r^{2})\leq 1/2$ by Lemma \ref{result7}. This gives $r^{2}+8r-1\leq 0$ which yields $r\leq 1/(4+\sqrt{17}):=R_1^\chi\approx0.1231.$	The result is sharp for $f_0(z)=z(1+z)^2/(1-z)^2 $ with the corresponding function $g_0(z)=z(1+z)/(1-z).$ Observe that $f_0(z)/g_0(z)=g_0(z)/z=(1+z)/(1-z)\in\mathcal{P}.$ Hence $f\in\mathcal{F}_1^\chi$ and $zf'(z)/f(z)$ equals $1/2$ at $z=-R_1^\chi$.
	
Secondly, suppose that $f\in\mathcal{F}_2^\chi$ and let us define functions $p,h:\disc\rightarrow\mathbb{C}$ as $p(z)=g(z)/z$ and $h(z)=g(z)/f(z).$ Then $p\in\mathcal{P}$, $h\in\mathcal{P}(1/2)$ and
	\[\frac{zf'(z)}{f(z)}=1+\frac{zp'(z)}{p(z)}-\frac{zh'(z)}{h(z)},\]
so that
	\[\left|\frac{zf'(z)}{f(z)}-1\right|\leq \frac{3r+r^{2}}{1-r^{2}}.\]
\lemref{result7} shows that this disk lies in  $\Omega_{car}$ provided $r\leq 1/(3+2\sqrt{3}):=R_2^\chi\approx 0.154701.$ The result is sharp  for the function $f_0(z)=z(1+z)^2/(1-z)$ with $g_0(z)=z(1+z)/(1-z)$. It is easy to see that $f_0 \in \mathcal{F}_2^\chi$ and
	 \[\left.\frac{zf'_0(z)}{f_0(z)}\right|_{z=-R_2^\chi}=\left.\frac{1+3z-2z^2}{1-z^2}\right|_{z=-R_2^\chi}=\frac{1}{2}.\]
	
Next, let $f\in\mathcal{F}_3^\chi$. Then the function $h(z)=f(z)/z$ belongs to $\mathcal{P}$ and a straightforward calculation shows that
\[\left|\frac{zf'(z)}{f(z)}-1\right|=\left|\frac{zh'(z)}{h(z)}\right|\leq \frac{2r}{1-r^2}.\]
Again \lemref{result7} implies that $2r/(1-r^2)\leq1/2$ which is equivalent to $r\leq 1/(2+\sqrt{5}):=R_3^\chi\approx 0.23606.$ For sharpness, consider the function $f_0(z)=z(1+z)/(1-z)$ which satisfies $zf_0'(z)/f_0(z)=1/2$ at $z=-R_3^\chi$.

(b) Let $\chi(z)=z/(1+z)$. For $f\in\mathcal{F}_1^\chi$, define the functions $p,h:\disc\rightarrow\mathbb{C}$ as $p(z)=f(z)/g(z)$ and $h(z)=(1+z)g(z)/z$. Then $p,h\in\mathcal{P}$ and $f(z)=zp(z)h(z)/(1+z).$ A simple calculation shows that
	 \[\frac{zf'(z)}{f(z)}=\frac{zh'(z)}{h(z)}+\frac{zp'(z)}{p(z)}+\frac{1}{1+z} \]
so that
\[\left|\frac{zf'(z)}{f(z)}-\frac{1}{1-r^2}\right|\leq \frac{5r}{1-r^2}. \]
As pointed out in \cite{ASHA}, the radius of starlikeness of the class $\mathcal{F}_1^\chi$ is $1/5$ and since $\Scar\subset \mathcal{S}^*$, we must have $r\leq 1/5$ so that the centre of the disk satisfies $1<1/(1-r^2)<3/2$. By applying \lemref{result7}, we have
	 \[\frac{5r}{1-r^2}\leq \frac{1}{1-r^2}-\frac{1}{2}\]
This is possible if $r\leq5-2\sqrt{6}:=R_1^\chi\approx 0.10102.$ For the function $f_0(z)=z(1-z)^2/(1+z)^3$ with $g_0(z)=z(1-z)/(1+z)^2$, note that $f_0\in  \mathcal{F}_1^\chi$ and $zf_0'(z)/f_0(z)=1/2$ at $z=R_1^\chi$.

	If $f \in\mathcal{F}_2^\chi$ and if we define the functions $h,p:\disc\rightarrow\mathbb{C}$ by $h(z)=g(z)/f(z)$ and $p(z)=(1+z)g(z)/z$, then $h\in\mathcal{P}(1/2),$  $p\in\mathcal{P}$ and $f(z)=zp(z)/((1+z)h(z))$ so that
	 \[\frac{zf'(z)}{f(z)}=\frac{zp'(z)}{p(z)}-\frac{zh'(z)}{h(z)}+\frac{1}{1+z}\]
which yields
	 \[\left|\frac{zf'(z)}{f(z)}-\frac{1}{1-r^2}\right|\leq\frac{4r+r^2}{1-r^2}. \]
The radius of starlikeness of the class $\mathcal{F}_2^\chi$ is $\sqrt{5}-2$ (see \cite{ASHA}). As a result, the centre of the above disk is less than $3/2$ and the  disk lies in $\Omega_{car}$ provided
	 \[\frac{4r+r^2}{1-r^2}\leq\frac{1}{1-r^2}-\frac{1}{2}\]
by \lemref{result7}. This gives $r\leq \sqrt{17}-4:=R_2^\chi\approx 0.12310.$ This bound is best possible for the function $f_0(z)=z(1-z)^2/(1+z)^2$ and $g_0(z)=z(1-z)/(1+z)^2.$ Clearly, $f_0\in\mathcal{F}_2^\chi$ and $zf_0'(z)/f_0(z)$ assumes the value $1/2$ at $z=R_2^\chi$.

	 If $f\in\mathcal{F}_3^\chi$, then the function $h:\disc\rightarrow\mathbb{C}$ defined as $h(z)=(1+z)f(z)/z$ satisfies $h\in\mathcal{P}$ and $f(z)=zh(z)/(1+z)$. Using the fact that the radius of starlikeness of the class $\mathcal{F}_3^\chi$ is $1/3$ (see \cite{ASHA}), the disk
\[\left|\frac{zf'(z)}{f(z)}-\frac{1}{1-r^2}\right|\leq\frac{3r}{1-r^2}\]
lies inside $\Omega_{car}$ for $r\leq3-2\sqrt{2}:=R_3^\chi\approx 0.17157$ as a consequence of Lemma \ref{result7}. Sharpness of the result is evident by taking function $f_0(z)=z(1-z)/(1+z)^2$ for which $zf_0'(z)/f_0(z)=1/2$ at $z=R_3^\chi$.
	
	(c) Let $\chi(z)=z/(1-z^2)$. Let $f\in\mathcal{F}_1^\chi$ and define the functions $q_1,q_2:\disc\rightarrow\mathbb{C}$ as $q_1(z)=f(z)/g(z)$ and $q_2(z)=(1-z^2)g(z)/z$. Then $q_1,q_2\in\mathcal{P}$ and $f(z)=zq_1(z)q_2(z)/(1-z^2).$ A routine calculation shows that
\[\frac{zf'(z)}{f(z)}=\frac{zq_1'(z)}{q_1(z)}+\frac{zq_2'(z)}{q_2(z)}+\frac{1+z^2}{1-z^2}\]
so that
\[\left|\frac{zf'(z)}{f(z)}-\frac{1+r^4}{1-r^4}\right|\leq \frac{2r(2r^2+r+2)}{1-r^4}.\]
In \cite{KHATTER2}, it has been shown that the radius of starlikeness of the class $\mathcal{F}_1^\chi$ is $1+\sqrt{2}-(2(1+\sqrt{2}))^{1/2}\approx 0.216845:=u_0$. As a result, $r\leq u_0$ and the centre of the disk $(1+r^4)/(1-r^4)\leq (1+u_0^4)/(1-u_0^4)\approx 1.00443$. Using these observations and \lemref{result7}, we obtain
\[\frac{2r(2r^2+r+2)}{1-r^4}\leq \frac{1+r^4}{1-r^4}-\frac{1}{2}. \]
This yields $r\leq r_1\approx0.116675$ where $r_1$  is the smallest positive real root of the equation $3r^4-8r^3-4r^2-8r+1=0$ in $(0,u_0)$. The result is sharp for the function $f_0\in\mathcal{F}_1^\chi$ given by
\[f_0(z)=\frac{z(1+iz)^2}{(1-z^2)(1-iz)^2}\quad \mbox{with} \quad g_0(z)=\frac{z(1+iz)}{(1-z^2)(1-iz)}.\]
Note that $zf_0'(z)/f_0(z)=(z^4-4iz^3+2z^2+4iz+1)/(1-z^4)$ and it assumes the value $1/2$ at $z=ir_1$.

For $f\in\mathcal{F}_2^\chi$, define the functions $q_1,q_2:\disc\rightarrow\mathbb{C}$ by $q_1(z)=g(z)/f(z)$ and $q_2(z)=(1-z^2)g(z)/z$. Then $p_1\in\mathcal{P}(1/2)$, $p_2\in\mathcal{P}$ and $f(z)=zq_2(z)/(q_1(z)(1-z^2))$ so that
\[\frac{zf'(z)}{f(z)}=\frac{zq_2'(z)}{q_2(z)}-\frac{zq_1'(z)}{q_1(z)}+\frac{1+z^2}{1-z^2}.\]
This gives
\[\left|\frac{zf'(z)}{f(z)}-\frac{1+r^4}{1-r^4}\right|\leq \frac{r(r^3+3r^2+3r+3)}{1-r^4}.\]
As observed in \cite{KHATTER2}, the radius of starlikeness of the class $\mathcal{F}_2^\chi$ is
\[v_0:=\frac{1}{3}\left( -1 - \frac{2^{2/3}}{(4 + 3 \sqrt{2})^{1/3}} + (2 (4 + 3 \sqrt{2}))^{1/3}\right)\approx0.253077\]
so that the centre of the disk $(1+r^4)/(1-r^4)\leq (1+v_0^4)/(1-v_0^4)\approx 1.00824$. Thus
\[\frac{r(r^3+3r^2+3r+3)}{1-r^4}\leq\frac{1+r^4}{1-r^4}-\frac{1}{2}.\]
This holds for $r\leq r_2$ where
\[r_2=\frac{3}{2}+\frac{\sqrt{17}}{2}-\sqrt{\frac{1}{2}(11+3\sqrt{17})}\approx 0.14327\]
is the smallest positive root of the equation $r^4-6r^3-6r^2-6r+1=0$ in $(0,v_0)$. For sharpness of this bound, consider the function $f_0\in\mathcal{F}_2^\chi$ given by
\[f_0(z)=\frac{z(1+iz)^2}{(1-z^2)(1-iz)}\quad \mbox{with}\quad g_0(z)=\frac{z(1+iz)}{(1-z^2)(1-iz)}.\]
It is easily seen that $zf_0'(z)/f_0(z)=(1+3iz+3z^2-3iz^3)/(1-z^4)=1/2$ at $z=i r_2$.

Next, suppose that $f \in\mathcal{F}_3^\chi.$ The function $p:\disc\rightarrow\mathbb{C}$ defined as $p(z)=(1-z^2)f(z)/z$ belongs to the class $\mathcal{P}$ and it is easy to deduce that
\[\left|\frac{zf'(z)}{f(z)}-\frac{1+r^4}{1-r^4}\right|\leq \frac{2r(r^2+r+1)}{1-r^4}.\]
The radius of starlikeness of the class $\mathcal{F}_3^\chi$ (see \cite{KHATTER2}) is
\[w_0:=\frac{1}{2}+\frac{\sqrt{5}}{2}-\sqrt{\frac{1}{2}(1+\sqrt{5})}\approx0.346014.\]
This shows that the centre $(1+r^4)/(1-r^4)\leq (1+w_0^4)/(1-w_0^4)\approx 1.02909$ and Lemma \ref{result7} provides the inequality
\[\frac{2r(r^2+r+1)}{1-r^4}\leq \frac{1+r^4}{1-r^4}-\frac{1}{2}\]
which is possible if $r\leq r_3\approx 0.202135$ where $r_3$ is the smallest positive root of the equation $3r^4-4r^3-4r^2-4r+1=0$ in $(0,w_0)$. The function $f_0(z)=z(1+iz)/((1-z^2)(1-iz))\in \mathcal{F}_3^\chi$ and at $z=ir_3$, $zf_0'(z)/f_0(z)=(z^4-2iz^3+2z^2+2iz+1)/(1-z^4)=1/2$.

	(d) Let $\chi(z)=k(z)=z/(1-z)^2$ be the Koebe function. For $f\in\mathcal{F}_1^\chi$, define functions $p_1,p_2:\disc\rightarrow\mathbb{C}$ by $q_1(z)=f(z)/g(z)$ and $q_2(z)=g(z)/k(z).$ The function $q_1,q_2\in\mathcal{P}$ and $f(z)=q_1(z)q_2(z)k(z)$ which gives
	\[\frac{zf'(z)}{f(z)}=\frac{zq_1'(z)}{q_1(z)}+\frac{zq_2'(z)}{q_2(z)}+\frac{1+z}{1-z}.\]
Thus we have
	\[\left|\frac{zf'(z)}{f(z)}-\frac{1+r^2}{1-r^2}\right|\leq \frac{6r}{1-r^2}.\]
El-Faqeer \emph{et al.} \cite{AMVS} determined the radius of starlikeness of the class $\mathcal{F}_1^\chi$ which turn out to be $3-2\sqrt{2}\approx 0.171573$. Consequently $r\leq 3-2\sqrt{2}$ and the centre $(1+r^2)/(1-r^2)\leq 3/(2\sqrt{2})\approx 1.06066$. In view of \lemref{result7}, it follows that
	\[\frac{6r}{1-r^2}\leq \frac{1+r^2}{1-r^2}-\frac{1}{2}.\]
This is true for $r\leq (6-\sqrt{33})/3:=R_1^\chi\approx 0.0851458$. The result is sharp for the function $f_0(z)=z(1+z)^2/(1-z)^4$ with $g_0(z)=z(1+z)/(1-z)^3$. For $z=-R_1^\chi$, $zf_0'(z)/f_0(z)=(z^2+6z+1)/(1-z^2)=1/2$.

Next, let $f\in\mathcal{F}_2^\chi$. Define functions $q_1,q_2:\disc\rightarrow\mathbb{C}$ as $q_1(z)=g(z)/f(z)$ and $q_2(z)=g(z)/k(z)$. Then $p_1\in\mathcal{P}(1/2)$, $p_2\in\mathcal{P}$ and
	\[\frac{zf'(z)}{f(z)}=\frac{zp_2'(z)}{p_2(z)}-\frac{zp_1'(z)}{p_1(z)}+\frac{1+z}{1-z}\]
so that
\[\left|\frac{zf'(z)}{f(z)}-\frac{1+r^2}{1-r^2}\right|\leq \frac{5r+r^2}{1-r^2}.\]
Since the radius of starlikeness of the class $\mathcal{F}_2^\chi$ is $1/5$ (see \cite{AMVS}), it is easily seen that $(1+r^2)/(1-r^2)\leq 13/12\approx 1.08333$. The domain $\Omega_{car}$ contains the above disk provided
	\[\frac{5r+r^2}{1-r^2}\leq \frac{1+r^2}{1-r^2}-\frac{1}{2}\]
by Lemma \ref{result7}, which is possible for $r\leq 5-2\sqrt{6}\approx 0.101021.$ This bound is sharp by considering the function $f_0(z)=z(1+z)^2/(1-z)^3$ with $g_0(z)=z(1+z)/(1-z)^3$. Observe that $zf_0'(z)/f_0(z)=(1+5z)/(1-z^2)=1/2$ at $z=-R_2^\chi$.

 Now, suppose that $f\in\mathcal{F}_3^\chi$ and let $h(z)=f(z)/k(z)$. Then $h\in\mathcal{P}$ and a simple computation gives
	 \[\left|\frac{zf'(z)}{f(z)}-\frac{1+r^2}{1-r^2}\right|\leq \frac{4r}{1-r^2}.\]
	 Ratti \cite[Theorem 3, p.\ 244]{RATTI} proved that the radius of starlikeness of the class $\mathcal{F}_3^\chi$ is $2-\sqrt{3}$ (in a more general setup). As a result, $r\leq 2-\sqrt{3}$ and $(1+r^2)/(1-r^2)\leq 2/\sqrt{3}\approx 1.1547$. \lemref{result7} shows that
\[\frac{4r}{1-r^2}\leq \frac{1+r^2}{1-r^2}-\frac{1}{2}\]
which provides $r\leq (4-\sqrt{13})/3\approx 0.13148.$ The result is sharp for the function $f_0(z)=z(1+z)/(1-z)^3.$

	(e) Let $\chi(z)=z+z^2/2$. If $f\in\mathcal{F}_1^\chi$, define functions $q_1,q_2:\disc\rightarrow\mathbb{C}$ as $q_1(z)=f(z)/g(z)$ and $q_2(z)=g(z)/(z+z^2/2)$. Then $q_1,q_2\in\mathcal{P}$ and $f(z)=q_1(z)q_2(z)\left(z+z^2/2\right).$ A calculation yields
	\[\frac{zf'(z)}{f(z)}=\frac{zq_1'(z)}{q_1(z)}+\frac{zq_2'(z)}{q_2(z)}+\frac{2(z+1)}{z+2}.\]
	Thus we have
	 \[	 \left|\frac{zf'(z)}{f(z)}-\frac{4-2r^2}{4-r^2}\right|\leq \frac{6r(3-r^2)}{(1-r^2)(4-r^2)}.\]
The function $\psi(r)=(4-2r^2)/(4-r^2)$ is a decreasing function of r in the interval $(0,1)$. As a result, the centre of the disk satisfies  $2/3<(4-2r^2)/(4-r^2)<1$. By \lemref{result7}, the disk lies in $\Omega_{car}$ if
	 \[\frac{6r(3-r^2)}{(1-r^2)(4-r^2)}\leq \frac{4-2r^2}{4-r^2}-\frac{1}{2}\]
which holds for $r\leq s_1$ where $s_1$ is the smallest positive root of the equation $3r^3+6r^2-19r+2=0$ in $(0,1)$. The result is sharp for the function
	 \[f_0(z)=\frac{(1+z)^2(z+z^2/2)}{(1-z)^2}\quad \mbox{with}\quad g_0(z)=\frac{(1+z)(z+z^2/2)}{(1-z)}.\]
Clearly $f_0\in \mathcal{F}_1^\chi$ and
	 \[\left.\frac{zf_0'(z)}{f_0(z)}\right|_{z=-s_1}=\frac{2(1-5s_1+s_1^2+s_1^3)}{(2-s_1)(1-s_1^2)}=\frac{1}{2}.\]
	
 Next, let $f \in \mathcal{F}_2^\chi$ and $q_1,q_2:\disc\rightarrow\mathbb{C}$ be analytic functions defined as $q_1(z)=g(z)/f(z)$ and $q_2(z)=g(z)/(z+z^2/2)$. Then $q_1\in\mathcal{P}(1/2)$ and $q_2\in\mathcal{P}.$ Also, $f(z)=(z+z^2/2)q_2(z)/q_1(z)$ which leads to the expression
\[\frac{zf'(z)}{f(z)}=\frac{zq_2'(z)}{q_2(z)}-\frac{zq_1'(z)}{q_1(z)}+\frac{2(z+1)}{z+2}.\]
By Lemma \ref{result7}, domain $\Omega_{car}$ contains the disk
\[\left|\frac{zf'(z)}{f(z)}-\frac{4-2r^2}{4-r^2}\right|\leq \frac{r(-r^3-5r^2+4r+14)}{(1-r^2)(4-r^2)}\]
provided
\[\frac{r(-r^3-5r^2+4r+14)}{(1-r^2)(4-r^2)}\leq \frac{4-2r^2}{4-r^2}-\frac{1}{2}.\]
This inequality is satisfied for $r\leq s_2\approx 0.134138$ where $s_2$ is the smallest positive real root of the equation $5r^3-15r+2=0$ in $(0,1)$. The result is sharp for the function $f_0\in \mathcal{F}_2^\chi$ defined as
\[f_0(z)=\frac{(1+z)^2(z+z^2/2)}{1-z}\quad \mbox{with}\quad g_0(z)=\frac{(1+z)(z+z^2/2)}{(1-z)}\]
as it satisfies
\[\left.\frac{zf_0'(z)}{f_0(z)}\right|_{z=-s_2}=\frac{2-8s_2-s_2^2+3s_2^3}{(2-s_2)(1-s_2^2)}=\frac{1}{2}.\]

Finally, let $f\in\mathcal{F}_3^\chi$. The function $h:\disc\rightarrow\mathbb{C}$ defined as $h(z)=f(z)/(z+z^2/2)$ belongs to class $\mathcal{P}$ and a simple computation leads to the disk
\[\left|\frac{zf'(z)}{f(z)}-\frac{4-2r^2}{4-r^2}\right|\leq \frac{2r(5-2r^2)}{(1-r^2)(4-r^2)}\]
which lies in the domain $\Omega_{car}$ if
\[\frac{2r(5-2r^2)}{(1-r^2)(4-r^2)}\leq \frac{4-2r^2}{4-r^2}-\frac{1}{2}.\]
This yields $r\leq s_3\approx 0.19028$, where $s_3$ is the smallest positive root of the equation $3r^3+2r^2-11r+2=0.$ The result is sharp for the function $f_0(z)=(1+z)(z+z^2/2)/(1-z)$.
\end{proof}

Let us provide an application of Theorem \ref{th7.1}. Motivated by Ling and Ding \cite{LING}, Ali \emph{et al.} \cite{ALI2} and Jain and Ravichandran \cite{NAVEEN} determined the bounds on $\rho$ such that the function $h_\rho(z)=(f\ast g)(\rho z)/\rho$ belongs to various subclasses of $\mathcal{S}^*$ if $f, g \in \mathcal{S}^*$. Using the similar technique, we will prove this result for the class $\Scar.$
\begin{corollary}
	Let $f,g\in\mathcal{S}^*.$ Then $h_\rho(z)=(f\ast g)(\rho z)/\rho\in\Scar$ for $0< \rho \leq (4-\sqrt{13})/3\approx 0.1314829$. The radius is sharp.
\end{corollary}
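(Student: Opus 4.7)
The plan is to apply Theorem~\ref{th7.1}(d) with the choice $\chi(z)=k(z)=z/(1-z)^2$, for which the $\Scar$-radius of $\mathcal{F}_3^k$ was computed to be $R_3^\chi=(4-\sqrt{13})/3$. Setting $F=f*g$, if one verifies that $F\in\mathcal{F}_3^k$, i.e., $\RE(F(z)/k(z))>0$ on $\disc$, then \thmref{th7.1}(d) applied directly to $F$ gives $h_\rho(z)=F(\rho z)/\rho\in\Scar$ for every $\rho\in(0,(4-\sqrt{13})/3]$, which is the claim.

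The core of the argument is therefore the convolution inclusion $\mathcal{S}^* * \mathcal{S}^* \subseteq \mathcal{F}_3^k$ or, equivalently, the disk estimate
\[
\left|\frac{zF'(z)}{F(z)}-\frac{1+r^2}{1-r^2}\right|\le\frac{4r}{1-r^2},\qquad |z|=r,
\]
that was extracted for $\mathcal{F}_3^k$ in the proof of Theorem~\ref{th7.1}(d)(iii) and which alone suffices to invoke \lemref{result7}. Following the technique of Ling--Ding, Ali \emph{et al.}, and Jain--Ravichandran cited in the preamble, I would derive this from the Hadamard identity $f*k=zf'$ (so that $f*k=f\cdot P$ with $P=zf'/f\in\mathcal{P}$) together with the corresponding relation for $g$, manipulating the ratio $F/k$ via the Herglotz representations of $P$ and $Q=zg'/g$ until the positive-real-part conclusion emerges.

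Sharpness is witnessed by the Koebe function $f=g=k$: then $F=k*k=z(1+z)/(1-z)^3$, and logarithmic differentiation yields
\[
\frac{zF'(z)}{F(z)}=1+\frac{z}{1+z}+\frac{3z}{1-z}=\frac{1+4z+z^2}{1-z^2}.
\]
Writing $\rho_0=(4-\sqrt{13})/3$, the defining relation $3\rho_0^2-8\rho_0+1=0$ collapses the value at $z=-\rho_0$ to
\[
\frac{1-4\rho_0+\rho_0^2}{1-\rho_0^2}=\frac{1}{2}=\phi_{car}(-1),
\]
a boundary point of $\Omega_{car}$. Thus $zh_{\rho_0}'/h_{\rho_0}$ touches $\partial\Omega_{car}$ at $z=-1$, and the radius cannot be enlarged.

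The principal obstacle is the convolution inclusion $F\in\mathcal{F}_3^k$ itself; a priori $(f*g)/k$ is only known to equal $1$ at the origin, and the identity $f*k=zf'$ has to be combined judiciously with the starlikeness hypotheses on both factors to upgrade this to a positive-real-part statement (or at least to the disk bound above). Once that step is in place, the remainder of the proof is a direct invocation of \lemref{result7} together with the disk estimate already present in the proof of \thmref{th7.1}(d).
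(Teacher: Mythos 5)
Your reduction to Theorem~\ref{th7.1}(d) hinges on the inclusion $f\ast g\in\mathcal{F}_3^{\chi}$ with $\chi(z)=z/(1-z)^2$, i.e.\ on $\RE\bigl((f\ast g)/\chi\bigr)>0$ throughout $\disc$, and this step --- which you yourself flag as the principal obstacle and leave unproved --- is in fact false. Take $f(z)=z/(1-z)^2$ and $g(z)=-f(-z)=z/(1+z)^2$, both starlike. Then $(f\ast g)(z)=z(1-z)/(1+z)^3$, so
\[
\frac{(f\ast g)(z)}{\chi(z)}=\left(\frac{1-z}{1+z}\right)^{3},
\]
and since $(1-z)/(1+z)$ maps $\disc$ onto the right half-plane, its cube attains values with negative real part (e.g.\ the value $-1$ at the point where $(1-z)/(1+z)=e^{i\pi/3}$). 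So $\mathcal{S}^*\ast\mathcal{S}^*\not\subseteq\mathcal{F}_3^{\chi}$, and no amount of Herglotz manipulation will produce the positive-real-part conclusion you are aiming for; the proposal cannot be completed along this route. (Your fallback, the disk estimate $\bigl|z F'/F-(1+r^2)/(1-r^2)\bigr|\le 4r/(1-r^2)$ for $F=f\ast g$, \emph{is} true, but not because $F\in\mathcal{F}_3^{\chi}$: it follows from writing $F=z(G\ast f)'$ with $zG'=g$, noting $G\ast f\in\mathcal{S}^*\subset\mathcal{S}$ by P\'olya--Schoenberg, and invoking the classical bound on $zs''(z)/s'(z)$ for univalent $s$. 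That derivation is absent from your write-up.)

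The paper avoids the false inclusion entirely: it writes $f\ast g=f_0\ast F\ast G$ where $f_0(z)=z(1+z)/(1-z)^3$ is the \emph{single} extremal function of $\mathcal{F}_3^{\chi}$ (for which $f_0/\chi=(1+z)/(1-z)$ genuinely has positive real part) and $zF'=f$, $zG'=g$, so that $F\ast G\in\mathcal{K}$. The radius $\rho_0=(4-\sqrt{13})/3$ established for $f_0$ is then transported to all of $f\ast g$ by the convolution stability of Lemma~\ref{result10} (the Ma--Minda convex-convolution theorem), using $\rho_0<1/2$. Your sharpness computation with $f=g=k$ is correct and agrees with the paper's extremal function $f_0=zk'$, but the existence half of your argument has a genuine gap that requires the convolution decomposition, not membership of $f\ast g$ in $\mathcal{F}_3^{\chi}$.
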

\begin{proof}
By the proof of Theorem \ref{th7.1}(d), the $\Scar$-radius of the class $\mathcal{F}_3^{z/(1-z)^2}$ is $\rho_0=(4-\sqrt{13})/3$ and this radius is sharp for the function $f_0(z)=z(1+z)/(1-z)^3$. Since $f,g\in\mathcal{S}^*$, the functions $F$ and $G$ defined by $zF'(z)=f(z)$ and $zG'(z)=g(z)$ belongs to the class $\mathcal{K}.$ Consequently $F\ast G\in\mathcal{K}.$ By using Lemma \ref{result10}, $(f_0*F*G)(\rho z)/\rho\in\Scar$ for $0<\rho\leq \min\{1/2, \rho_0\}=\rho_0$. Since $f\ast g=f_0*F\ast G$, the result follows.
\end{proof}

\section*{Acknowledgements}
The first author is supported by a Junior Research Fellowship from Council of Scientific and Industrial Research (CSIR), New Delhi with File No. 09/045(1727)/2019-EMR-I.


\begin{thebibliography}{00}
	
\bibitem{ALI3} R. M. Ali, N. E. Cho, V. Ravichandran, and S. S. Kumar, Differential subordination for functions associated with the lemniscate of Bernoulli, Taiwanese J. Math. {\bf 16} (2012), no.~3, 1017--1026.

	\bibitem{ALI}R. M. Ali, N. K. Jain\ and\ V. Ravichandran, On the radius constants for classes of analytic functions, Bull. Malays. Math. Sci. Soc. (2) {\bf 36} (2013), no.~1, 23--38.

\bibitem{ALI2} R. M. Ali, V. Ravichandran\ and\ N. K. Jain, Convolution of certain analytic functions, J. Anal. {\bf 18} (2010), 1--8.

\bibitem{ALI1} R. M. Ali, V. Ravichandran\ and\ N. Seenivasagan, Coefficient bounds for $p$-valent functions, Appl. Math. Comput. {\bf 187} (2007), no.~1, 35--46.
	
\bibitem{COSH} A. Alotaibi, M. Arif, M. A. Alghamdi and S. Hussain, Starlikness Associated with Cosine Hyperbolic Function, Mathematics, \textbf{8} (2020), no.~7, 1118.
	
\bibitem{DENIZ1} M. \c{C}a\u{g}lar, E. Deniz\ and\ H. Orhan, Coefficient bounds for a subclass of starlike functions of complex order, Appl. Math. Comput. {\bf 218} (2011), no.~3, 693--698.

	\bibitem{CHO1}N. E. Cho, V. Kumar, S. S. Kumar and V. Ravichandran, Radius problems for starlike functions associated with the sine function, Bull. Iranian Math. Soc. {\bf 45} (2019), no.~1, 213--232.

\bibitem{DENIZ} E. Deniz, On $P$-valently close-to-convex, starlike and convex functions, Hacet. J. Math. Stat. {\bf 41} (2012), no.~5, 635--642.

   \bibitem{DUREN} P. L. Duren, {\it Univalent Functions}, Grundlehren der Mathematischen Wissenschaften, 259, Springer-Verlag, New York, 1983.
 	
\bibitem{AMVS} A. S. Ahmad El-Faqeer, M. H. Mohd, V. Ravichandran, and S. Supramaniam, Starlikeness of certain analytic functions, 	arXiv: 2006.11734 (2020).

 \bibitem{GANDHI} S. Gandhi\ and\ V. Ravichandran, Starlike functions associated with a lune, Asian-Eur. J. Math. {\bf 10} (2017), no.~4, 1750064, 12 pp.
\bibitem{GOEL}P. Goel\ and\ S. S. Kumar, Certain Class of Starlike Functions Associated with Modified Sigmoid Function, Bull. Malays. Math. Sci. Soc. {\bf 43} (2020), no.~1, 957--991.

\bibitem{NAVEEN} N. K. Jain\ and\ V. Ravichandran, Product and convolution of certain univalent functions, Honam Math. J. {\bf 38} (2016), no.~4, 701--724.
	
\bibitem{JANOWSKI1} W. Janowski, Extremal problems for a family of functions with positive real part and for some related families, Ann. Polon. Math. {\bf 23} (1970/71), 159--177.


	\bibitem{KANAGA} R. Kanaga and V. Ravichandran, Starlikeness for certain close-to-star functions, arXiv preprint arXiv: 2003.05628 (2020).
	
\bibitem{KANAS} S. Kanas\ and\ A. Wisniowska, Conic regions and $k$-uniform convexity, J. Comput. Appl. Math. {\bf 105} (1999), no.~1-2, 327--336.

	\bibitem{KAP}W. Kaplan, Close-to-convex schlicht functions, Michigan Math. J. {\bf 1} (1952), 169--185 (1953).
	
	\bibitem{KARGAR}R. Kargar, A. Ebadian\ and\ J. Sok\'{o}\l, On Booth lemniscate and starlike functions, Anal. Math. Phys. {\bf 9} (2019), no.~1, 143--154.
	
	\bibitem{KHATTER}K. Khatter, V. Ravichandran\ and\ S. Sivaprasad Kumar, Starlike functions associated with exponential function and the lemniscate of Bernoulli, Rev. R. Acad. Cienc. Exactas F\'{\i}s. Nat. Ser. A Mat. RACSAM {\bf 113} (2019), no.~1, 233--253.


\bibitem{SUSHIL}S. Kumar\ and\ V. Ravichandran, A subclass of starlike functions associated with a rational function, Southeast Asian Bull. Math. {\bf 40} (2016), no.~2, 199--212.


    \bibitem{LECKO} A. Lecko, Strongly starlike and spirallike functions, Ann. Polon. Math. {\bf 85} (2005), no.~2, 165--192.

    \bibitem{ADAM} A. Lecko\ and\ A. Wi\'{s}niowska, Geometric properties of subclasses of starlike functions, J. Comput. Appl. Math. {\bf 155} (2003), no.~2, 383--387.



    \bibitem{KHATTER2} S. K. Lee, K. Khatter and V. Ravichandran, Radius of starlikeness for classes of analytic functions, Bull. Malays. Math. Sci. Soc. to appear, arXiv: 2006.11744 (2020).

    \bibitem{LING} Y. Ling\ and\ S. Ding, On radii of starlikeness and convexity for convolutions of starlike functions, Internat. J. Math. Math. Sci. {\bf 20} (1997), no.~2, 403--404.
	
	\bibitem{MaMinda} W. C. Ma\ and\ D. Minda, A unified treatment of some special classes of univalent functions, in {\it Proceedings of the Conference on Complex Analysis (Tianjin, 1992)}, 157--169, Conf. Proc. Lecture Notes Anal., I, Int. Press, Cambridge, MA.

\bibitem{MAC} T. H. MacGregor, Functions whose derivative has a positive real part, Trans. Amer. Math. Soc. {\bf 104} (1962), 532--537.

\bibitem{MAC1} T. H. MacGregor, The radius of univalence of certain analytic functions, Proc. Amer. Math. Soc. {\bf 14} (1963), 514--520.

\bibitem{MAC2} T. H. MacGregor, The radius of univalence of certain analytic functions. II, Proc Amer. Math. Soc. {\bf 14} (1963), 521--524.	

	\bibitem{MEND}R. Mendiratta, S. Nagpal\ and\ V. Ravichandran, A subclass of starlike functions associated with left-half of the lemniscate of Bernoulli, Internat. J. Math. {\bf 25} (2014), no.~9, 1450090, 17 pp.
	
	\bibitem{MEND1}R. Mendiratta, S. Nagpal\ and\ V. Ravichandran, On a subclass of strongly starlike functions associated with exponential function, Bull. Malays. Math. Sci. Soc. {\bf 38} (2015), no.~1, 365--386.
	
	\bibitem{MILLER}S. S. Miller\ and\ P. T. Mocanu, {\it Differential subordinations}, Monographs and Textbooks in Pure and Applied Mathematics, 225, Marcel Dekker, Inc., New York, 2000.



\bibitem{PADMANABHAN} K. S. Padmanabhan, On certain classes of starlike functions in the unit disk, J. Indian Math. Soc. (N.S.) {\bf 32} (1968), 89--103.

\bibitem{RAINA} R. K. Raina\ and\ J. Sok\'{o}\l, Some properties related to a certain class of starlike functions, C. R. Math. Acad. Sci. Paris {\bf 353} (2015), no.~11, 973--978.

 \bibitem{RATTI} J. S. Ratti, The radius of univalence of certain analytic functions, Math. Z. {\bf 107} (1968), 241--248.

\bibitem{RAVI1} V. Ravichandran, Geometric properties of partial sums of univalent functions, Math. Newsletter. {\bf 22} (2012), 208--221.
	
	\bibitem{RAVI}V. Ravichandran, F. R\o nning\ and\ T. N. Shanmugam, Radius of convexity and radius of starlikeness for some classes of analytic functions, Complex Variables Theory Appl. {\bf 33} (1997), no.~1-4, 265--280.
	
	
   \bibitem{ASHA} A. Sebastian\ and\ V. Ravichandran, Radius of Starlikeness of Certain Analytic Functions, arXiv: 2001.06999 (2020).

	\bibitem{KANSH}K. Sharma, N. K. Jain\ and\ V. Ravichandran, Starlike functions associated with a cardioid, Afr. Mat. {\bf 27} (2016), no.~5-6, 923--939.

	\bibitem{PSHARMA}P. Sharma, R. K. Raina\ and\ J. Sok\'{o}\l, Certain Ma--Minda type classes of analytic functions associated with the crescent-shaped region, Anal. Math. Phys. {\bf 9} (2019), no.~4, 1887--1903.

\bibitem{SILVERMAN} H. Silverman, Univalent functions with negative coefficients, Proc. Amer. Math. Soc. {\bf 51} (1975), 109--116.
	
\bibitem{SIM} Y. J. Sim, O. S. Kwon, N. E. Cho and H. M. Srivastava, Some classes of analytic functions associated with conic regions, Taiwanese J. Math. {\bf 16} (2012), no.~1, 387--408.
    \bibitem{RAM} R. Singh, On a class of star-like functions, Compositio Math. {\bf 19} (1967), 78--82 (1967).
	
	



	\bibitem{SOKOL1} J. Sok\'{o}\l, On some subclass of strongly starlike functions, Demonstratio Math. {\bf 31} (1998), no.~1, 81--86.

	\bibitem{SOKOL}J. Sok\'{o}\l\ and\ J. Stankiewicz, Radius of convexity of some subclasses of strongly starlike functions, Zeszyty Nauk. Politech. Rzeszowskiej Mat. No. 19 (1996), 101--105.

\bibitem{MBETA} B. A. Uralegaddi, M. D. Ganigi\ and\ S. M. Sarangi, Univalent functions with positive coefficients, Tamkang J. Math. {\bf 25} (1994), no.~3, 225--230.

    \bibitem{WANI2} L. A. Wani\ and\ A. Swaminathan, Radius problems for functions associated with a nephroid domain, Rev. R. Acad. Cienc. Exactas F\'{\i}s. Nat. Ser. A Mat. RACSAM {\bf 114} (2020), no.~4, 178.
	
	\bibitem{WANI} L. A. Wani and A. Swaminathan, Starlike and Convex Functions Associated with a Nephroid Domain, Bull. Malays. Math. Sci. Soc. (2020), https://doi.org/10.1007/s40840-020-00935-6.


	\bibitem{YUNUS}Y. Yunus, S. A. Halim and A. B. Akbarally, Subclass of starlike functions associated with a limacon,
	in AIP Conference Proceedings 2018 Jun 28 (Vol. 1974, No. 1, p. 030023), AIP Publishing.
	

\end{thebibliography}
\end{document}